\documentclass[11pt]{amsart}
\usepackage{graphicx}
\usepackage{amscd}
\usepackage{amsmath}
\usepackage{amsxtra}
\usepackage{amsfonts}
\usepackage{amssymb}
\usepackage{xcolor}

\oddsidemargin  0.0in
	\evensidemargin 0.0in
	\textwidth      6.5in
	\headheight     0.0in
	\topmargin      0.0in

\newtheorem{theorem}{Theorem}[section]
\newtheorem{corollary}[theorem]{Corollary}
\newtheorem{lemma}[theorem]{Lemma}
\newtheorem{proposition}[theorem]{Proposition}
\newtheorem{assumptions}[theorem]{Assumptions}

\theoremstyle{definition}
\newtheorem{definition}[theorem]{Definition}

\newtheorem{remark}[theorem]{Remark}

\newtheorem{example}[theorem]{Example}
\theoremstyle{remark}
\newtheorem{claim}[theorem]{Claim}
\renewcommand{\theclaim}{\textup{\theclaim}}

\newtheorem*{acknowledgements}{Acknowledgements}

\numberwithin{equation}{section}

\newcommand\mtiny[1]{\mbox{\tiny\ensuremath{#1}}}

\def\openone

{\mathchoice

{\hbox{\upshape \small1\kern-3.3pt\normalsize1}}

{\hbox{\upshape \small1\kern-3.3pt\normalsize1}}

{\hbox{\upshape \tiny1\kern-2.3pt\SMALL1}}

{\hbox{\upshape \Tiny1\kern-2pt\tiny1}}}

\makeatletter

\newbox\ipbox

\newcommand{\ip}[2]{\left\langle #1\, , \,#2\right\rangle}
\newcommand{\diracb}[1]{\left\langle #1\mathrel{\mathchoice

{\setbox\ipbox=\hbox{$\displaystyle \left\langle\mathstrut
#1\right.$}

\vrule height\ht\ipbox width0.25pt depth\dp\ipbox}

{\setbox\ipbox=\hbox{$\textstyle \left\langle\mathstrut
#1\right.$}

\vrule height\ht\ipbox width0.25pt depth\dp\ipbox}

{\setbox\ipbox=\hbox{$\scriptstyle \left\langle\mathstrut
#1\right.$}

\vrule height\ht\ipbox width0.25pt depth\dp\ipbox}

{\setbox\ipbox=\hbox{$\scriptscriptstyle \left\langle\mathstrut
#1\right.$}

\vrule height\ht\ipbox width0.25pt depth\dp\ipbox}

}\right. }

\newcommand{\dirack}[1]{\left. \mathrel{\mathchoice

{\setbox\ipbox=\hbox{$\displaystyle \left.\mathstrut
#1\right\rangle$}

\vrule height\ht\ipbox width0.25pt depth\dp\ipbox}

{\setbox\ipbox=\hbox{$\textstyle \left.\mathstrut
#1\right\rangle$}

\vrule height\ht\ipbox width0.25pt depth\dp\ipbox}

{\setbox\ipbox=\hbox{$\scriptstyle \left.\mathstrut
#1\right\rangle$}

\vrule height\ht\ipbox width0.25pt depth\dp\ipbox}

{\setbox\ipbox=\hbox{$\scriptscriptstyle \left.\mathstrut
#1\right\rangle$}

\vrule height\ht\ipbox width0.25pt depth\dp\ipbox}

} #1\right\rangle}

\newcommand{\beq}{\begin{equation}}

\newcommand{\eeq}{\end{equation}}

\newcommand{\cj}[1]{\overline{#1}}

\newcommand{\bz}{\mathbb{Z}}

\newcommand{\dist}{\textup{dist}}
\newcommand{\br}{\mathbb{R}}
\newcommand{\bc}{\mathbb{C}}

\newcommand{\bn}{\mathbb{N}}

\def\blfootnote{\xdef\@thefnmark{}\@footnotetext}


\newcommand{\Prob}{\operatorname*{Prob}}

\renewcommand{\mod}{\operatorname{mod}}

\hyphenation{wave-lets}\hyphenation{ in-fi-nite}\hyphenation{ con-vo-lu-tion}

\input xy
\xyoption{all}
\usepackage{amssymb}



\newcommand{\Span}{\overline{\operatorname*{span}}}

\def\R{\mathbb{R}}

\def\-{^{-1}}

\def\ty{\emptyset}

\def\C{\mathbb{C}}
\def\Z{\mathbb{Z}}


\begin{document}

\title[Parseval Frames from Compressions of Cuntz Algebras]{Parseval Frames from Compressions of Cuntz Algebras}
\author{Nicholas Christoffersen }
\address{[Nicholas Christoffersen ] University of Colorado\\
    Department of Mathematics\\
    Campus Box 395\\
    2300 Colorado Avenue\\
    Boulder, CO 80309-0395\\} \email{nicholas.christoffersen@colorado.edu}

\author{Dorin Ervin Dutkay}
\address{[Dorin Ervin Dutkay] University of Central Florida\\
	Department of Mathematics\\
	4000 Central Florida Blvd.\\
	P.O. Box 161364\\
	Orlando, FL 32816-1364\\
U.S.A.\\} \email{Dorin.Dutkay@ucf.edu}
\author{Gabriel Picioroaga}
\address{[Gabriel Picioroaga] University of South Dakota\\
          Department of Mathematical Sciences\\
          414 E. Clark St. \\
          Vermillion, SD 57069\\
U.S.A. \\} \email{Gabriel.Picioroaga@usd.edu}
\author{Eric S. Weber}
\address{[Eric S. Weber] Iowa State University\\
          Department of Mathematis\\
396 Carver Hall\\
411 Morrill Road\\
Ames, IA 50011\\
U.S.A. \\} \email{esweber@iastate.edu}

\thanks{}
\subjclass[2010]{47L55, 05C81,28A80, 42A16,42C10 }
\keywords{Cuntz algebra, Parseval frame, row co-isometry, iterated function systems, Fourier series, fractal measures, Walsh bases }

\begin{abstract}
A row co-isometry is a family $(V_i)_{i=0}^{N-1}$ of operators on a Hilbert space, subject to the relation 
$$\sum_{i=0}^{N-1}V_iV_i^*=I.$$
As shown in \cite{BJK00}, row co-isometries appear as compressions of representations of Cuntz algebras. 

In this paper we will present some general constructions of Parseval frames for Hilbert spaces, obtained by iterating the operators $V_i$ on a finite set of vectors. The constructions are based on random walks on finite graphs. As applications of our constructions we obtain Parseval Fourier bases on self-affine measures and Parseval Walsh bases on the interval. 
\end{abstract}
\maketitle \tableofcontents

\section{Introduction}

Structured bases appear in harmonic analysis, operator theory, and approximation theory, among other areas.  The classical example of a structure basis is an exponential (Fourier) basis, which gives rise to Fourier series expansions.  A probability measure $\mu$ on $\mathbb{R}^{d}$ is \emph{spectral} if there exists a sequence of exponential functions that form an orthonormal basis for $L^2(\mu)$.  Lebesgue measure on the unit (hyper-)cube is spectral; remarkably, Jorgensen and Pedersen initially showed that there are fractal measures which are spectral \cite{JoPe98}.  Wavelet bases \cite{Dau92} are another ubiquitous class of structured bases.  These arise from the action of a system of unitary operators on $L^2(\mathbb{R})$--dilations and translations \cite{DaLa98}--that encode natural operations on the latent space.  Wavelets, however, lead a double existence between $L^2(\mathbb{R})$ and $\ell^2(\mathbb{Z})$, as elucidated by Mallat \cite{Mal89}.  Wavelet bases in $\ell^2(\mathbb{Z})$ are generated by the iterated action of a finite number of (co-)isometries.  These co-isometries give rise to a notion of scale in $\ell^2(\mathbb{Z})$, and the corresponding scale decomposition is referred to as \emph{the cascade algorithm}.

In the case of wavelet bases, the co-isometries $\{ S_i^{*} \}$ satisfy what are now known as the Cuntz relations:
$$\sum_{i=0}^{N-1}S_iS_i^*=I,\quad S_j^*S_i=\delta_{j,i}I$$
These relations were in fact observed by engineers--albeit without a precise mathematical formulation--in the first half of the twentieth century.  J. Cuntz is credited with the discovery and thorough study of the algebras generated by such systems of co-isometries and the associate representation theory of those algebras~\cite{Cun77}.  The role played by the Cuntz algebras in wavelet theory was described in the work of Bratteli and Jorgensen \cite{BrJo02a,BrJo02b,BrJo00,BrJo97}. Orthonormal wavelet bases (ONB) are constructed from various choices of quadrature mirror filters (e.g. see \cite{Dau92}). These filters are in one-to-one correspondence with certain representations of a Cuntz algebra.

The Cuntz relations give an elegant way to understand the geometry of cascade algorithms (for example the Discrete Wavelet Transform). The first identity is used to decompose a vector $v$ into a ``cascade'' of bits  $S_{i}^*v$, where $i$ indexes from 0 to $ N-1$. Recovery of $v$ is obtained through the same identity: apply each $S_i$ to the bits and sum it all up.  The second relation (orthogonality of operators) tells us that bit interference/overlapping is avoided. 

In \cite{DPS14} new examples of orthonormal bases and also classic ones were found to be generated by Cuntz algebra representations. More precisely, a multitude of ONBs such as Fourier bases on fractals, Walsh bases on the unit interval, and piecewise exponential bases on the middle-third Cantor set can be gathered under the umbrella of Cuntz algebra representations.  Moreover, in \cite{DuPiSi19} a connection between the ONB property and irreducibility of a Cuntz algebra representation was made in the particular case of Walsh systems.  

However, orthonormal bases are sometimes too restrictive--for example, not all measures are spectral \cite{JoPe98}, and \emph{orthogonal} wavelets may lack certain desirable properties \cite{Dau92}.  Non-orthogonal expansions given by \emph{frames} were introduced by Duffin and Schaffer \cite{DuSc52} and popularized by \cite{DGM86}.  A \emph{Parseval frame} for a Hilbert space $H$ is a family of vectors $\{e_i\}_{i\in I}$ such that 
$$\|v\|^2=\sum_{i\in I}|\ip{v}{e_i}|^2,\quad (v\in H).$$
Parseval frames arise naturally as images of orthonormal bases under a co-isometry; notably, by the Naimark dilation theorem, all Parseval frames have this form \cite{HaLa00}.

One of our main motivations and applications comes from the harmonic analysis of fractal measures. The study of orthogonal Fourier series on fractal measures began with the paper \cite{JoPe98}, in connection with the Fuglede conjecture \cite{Fug74}. Jorgensen and Pedersen proved that, for the Cantor measure $\mu_4$ on the Cantor set $C_4$ with scale 4 and digits 0 and 2, the set of exponential functions 
$$\left\{ e^{2\pi i\lambda x}: \lambda=\sum_{k=0}^n 4^k l_l, n\in \bn, l_k\in\{0,1\}\right\},$$
is an orthonormal basis of $L^2(\mu_4)$.  Many more examples of spectral measures have been constructed since, see, e.g., \cite{Str00,DJ06, DHL19}. For the classical middle-third Cantor measure, Jorgensen and Pedersen proved that this construction is not possible \cite[Section 6]{JoPe98}, so that measure is not spectral.  Strichartz \cite{Str00} posed the natural question of whether this measure has a frame of exponential functions.  This question is still open!  Non-orthogonal (but non-frame) Fourier series expansions for the middle-third Cantor measure were constructed in \cite{herr2017fourier}.

Motivated by Strichartz's question, in \cite{PiWe17}, weighted Fourier Frames were obtained for the Cantor $C_4$ set by making the set bigger, then constructing a basis for the bigger set, and then projecting the basis onto the Cantor set. Using the same dilation technique, in \cite{DuRa18, DuRa20}, a multitude of Parseval frames of weighted exponential functions and generalized Walsh bases were constructed for self-affine measures and for the unit interval.  These constructions lead naturally to reconsidering the Cuntz relations, noting that only the first relation is needed to reconstruct a signal originally decomposed by a cascade algorithm. In such a set-up the operators $(S_i)_{i=0}^N$ are called row co-isometries.  

In this paper we continue with the philosophy first emphasized in \cite{HaLa00}--that frames are compressions of orthogonal bases--by considering compressions of Cuntz algebras.  Indeed, one key idea is a dilation result from \cite{BJK00} (Theorem \ref{dil} in the next section) which allows one to extend a representation of a row co-isometry (which lacks the orthogonality constraints) to a ``genuine'' Cuntz algebra representation.   We extend this philosophy to the pair {\it{Parseval frame, row co-isometries}}.   By doing so, we will present a general framework for the construction of Parseval frames and orthonormal bases from row co-isometries, a framework which includes both of the following settings: (i) Fourier bases on self-affine measures and (ii) Walsh bases on the interval.  We show that the new results are effective and can capture and unify previously obtained examples of Parseval frames and ONBs. 

In Section 2 we include some definitions and notations; Section 3 we state the main results; in Section 4, we present the proofs and some related results, and in Section 5 we apply the theory to various classes of examples.

\section{Preliminaries and notations}

\begin{definition}\label{p1}
Let $H$ be a Hilbert space. A family of $N$ bounded operators $\{S_i\}_{i=0}^{N-1}$ on $H$ that satisfy the relations
$$\sum_{i=0}^{N-1}S_iS_i^*=I_H,\quad S_j^*S_i=\delta_{j,i}I, \quad(i,j\in\{0,\dots,N-1\})$$
is called {\it a family of Cuntz isometries}, or a {\it a representation of the Cuntz algebra} $\mathcal O_N$. 

Note that the second relation implies that the operators are isometries with orthogonal ranges, and the first relation implies that the sum of the ranges add up to the whole space. 

Such a representation is called {\it irreducible} if the only operators $A$ on $H$, that commute with $S_i$ and $S_i^*$, i.e., $AS_i=S_iA$, $AS_i^*=S_i^*A$, for all $i\in\{0,\dots,N-1\}$, are multiples of the identity $A=cI_H$, for some $c\in\bc$. Equivalently, a representation is irreducible if and only if the only closed subspaces $K$ of $H$ which are {\it invariant for the representation}, i.e., $S_iK\subseteq K$, $S_i^*K\subseteq K$, for all $i\in\{0,\dots,N-1\}$, are $K=\{0\}$ and $K=H$.

For a closed subspace $K$ of $H$ we will denote by $P_K$ the orthogonal projection onto $K$.

\end{definition}

\begin{definition}\label{p2}
Let $H$ be a Hilbert space. A family of vectors $\{e_i: i\in I\}$ in $H$ is called a {\it frame}, if there exists constants $A,B>0$, called {\it the frame bounds} such that 
$$A\|v\|^2\leq \sum_{i\in I}|\ip{v}{e_i}|^2\leq B\|v\|^2,\mbox{ for all }v\in H.$$
The family of vectors is called a {\it Parseval frame} if the frame bounds are equal to 1, $A=B=1$. 

\end{definition}

\begin{theorem}\cite[Theorem 5.1]{BJK00}\label{dil}
Let $K$ be a Hilbert space, and let $V_0,\dots ,V_{N-1}$ be bounded operators satisfying
\begin{equation}\label{e0}
\displaystyle\sum_{i=0}^{N-1}V_iV^*_i=I_K
\end{equation}
Then $K$ can be embedded into a larger Hilbert space $H$ carrying a representation $S_0,\dots ,S_{N-1}$ of the Cuntz algebra $\mathcal{O}_N$ such that $K$ is cyclic for the representation, and if $P_K:H\to K$ is the projection onto $K$ we have
\begin{equation}\label{e1}
S^*_i(K)\subset K, \text{ and }V^*_iP_K=S^*_iP_K=P_KS^*_iP_K,\mbox{ so } P_KS_iP_K=P_KS_i,\quad V_i=P_KS_{i_{_{|K}}}.
\end{equation}

The system $(H, S_i, P_K)$ is unique up to unitary equivalence, and if $\sigma:\mathcal{B}(K)\to\mathcal{B}(K)$ is defined by
$$\sigma(A):=\sum_{i=0}^{N-1} V_iAV^*_i,$$
then the commutant $\{S_i, i=0,\dots ,N-1\}'$ is isometrically order isomorphic to the fixed point set $\mathcal{B}(K)^{\sigma}=\{A\in\mathcal{B}(K)\text{  }|\text{  }\sigma(A)=A\}$, by the map $A\mapsto P_KAP_K$.
\end{theorem}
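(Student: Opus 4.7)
My plan is a GNS-type construction of $H$ with an inductive-limit flavor, after which the dilation properties and the commutant identification reduce to tracking words. Let $\mathcal{W}$ denote the free monoid on $\{0,\dots,N-1\}$ with empty word $e$, and for $\gamma=\gamma_1\cdots\gamma_n\in\mathcal{W}$ set $V_\gamma:=V_{\gamma_1}\cdots V_{\gamma_n}$ and $V_e:=I$. On the free vector space spanned by formal symbols $\{S_\alpha k : \alpha\in\mathcal{W},\,k\in K\}$, I would define a sesquilinear form by
\[
\langle S_\alpha k,\,S_\beta k'\rangle := \langle V_\gamma^* k,\,k'\rangle_K \ \text{if}\ \alpha=\beta\gamma, \qquad \langle S_\alpha k,\,S_\beta k'\rangle := \langle k,\,V_\gamma^* k'\rangle_K\ \text{if}\ \beta=\alpha\gamma,
\]
and zero when neither word is a prefix of the other. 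These formulas are the only possible ones given the desired identities $S_i^*S_j=\delta_{ij}I$ and $S_i^*|_K=V_i^*$. The first nontrivial step, and the main obstacle, is positivity of this form. The key is the ``refinement identity'' $S_\alpha k\equiv\sum_i S_{\alpha i}(V_i^* k)$, which is consistent with the form precisely because $\sum V_iV_i^*=I$ yields $\sum\|V_i^*k\|^2=\|k\|^2$. Iterating refinement, any finite sum may be rewritten with all words of a common length $n$; orthogonality of distinct $S_\alpha$ of equal length then reduces the squared norm to $\sum_{|\alpha|=n}\|k_\alpha\|^2\ge 0$. Quotienting by the null space, completing, and identifying $K$ with $\{S_e k\}$ produces $H$.

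Next I would define $S_i$ by left concatenation, $S_i(S_\alpha k):=S_{i\alpha}k$. Checking the inner-product formulas gives $S_i^*S_j=\delta_{ij}I$ and $S_i^* k=V_i^* k\in K$, hence $S_i^*(K)\subset K$ and $V_i=pS_i|_K$. The refinement identity at $\alpha=e$ reads $k=\sum_i S_iV_i^*k=\sum_iS_iS_i^*k$, which combined with $S_i^*(S_{j\alpha}k)=\delta_{ij}S_\alpha k$ yields $\sum_iS_iS_i^*=I$ on all generators and hence on $H$. Cyclicity of $K$ is immediate from the spanning set. For uniqueness, any other system $(H',S_i',p')$ satisfying the conclusions admits the same recursive inner-product formulas on $\{S_\alpha' k\}$, so the correspondence $S_\alpha k\mapsto S_\alpha' k$ extends to a unitary intertwiner.

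For the commutant statement, the forward map $A\mapsto pAp$ lands in $\mathcal{B}(K)^\sigma$: from $A\in\{S_i\}'$ we get $A=A\sum_i S_iS_i^*=\sum_iS_iAS_i^*$, and compressing by $p$, while exploiting that $K^\perp$ is $S_i$-invariant (equivalent to $S_i^*(K)\subset K$), gives $\sigma(pAp)=pAp$. The inverse direction—lifting $B\in\mathcal{B}(K)^\sigma$ to $A\in\{S_i\}'$ with $pAp=B$—is the second delicate point, carried out by a commutant-lifting argument: $A$ is specified compatibly on each subspace $S_\alpha K$, and the fixed-point condition $\sigma(B)=B$ is exactly what is needed for consistency across refinement. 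Positivity of the compression map then upgrades the bijection to an isometric order isomorphism. The main obstacles throughout are the positivity of the defining form and the commutant lifting; the rest is formal bookkeeping on words.
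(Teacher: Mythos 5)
The paper does not actually prove Theorem \ref{dil}; it is quoted verbatim from \cite[Theorem 5.1]{BJK00}, so there is no in-paper argument to compare against. Your proposal reconstructs the standard proof of that result: a GNS/Popescu-type minimal isometric dilation of the row co-isometry, with positivity obtained by refining every word to a common length, followed by the commutant--fixed-point correspondence. The skeleton is sound: the refinement relation $S_\alpha k\equiv\sum_i S_{\alpha i}V_i^*k$ pairs to zero against every generator precisely because $\sum_i V_iV_i^*=I$, generators with distinct words of equal length are orthogonal, and the Cuntz relations, cyclicity of $K$, and uniqueness then follow formally, exactly as you indicate.

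Two points need repair or completion. First, your two defining formulas are interchanged: with the intended identities $S_i^*S_j=\delta_{ij}I$ and $S_i^*|_K=V_i^*$ one is forced to set $\langle S_\alpha k,S_\beta k'\rangle=\langle k,V_\gamma^*k'\rangle$ when $\alpha=\beta\gamma$ and $\langle V_\gamma^*k,k'\rangle$ when $\beta=\alpha\gamma$. As literally written, your form gives $\langle S_ik,k'\rangle=\langle V_i^*k,k'\rangle$, i.e. it compresses $S_i$ to $V_i^*$ rather than $V_i$, the refinement identity is no longer null for the form (its consistency would require $\sum_i V_i^*V_i=I$, which is not assumed), and $S_i^*K\subset K$ fails; swapping the two cases fixes everything, and your later computations implicitly use the corrected convention. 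Second, the surjectivity and isometry of $A\mapsto pAp$ is only gestured at. The standard completion is a weak-limit construction: for $B\in\mathcal{B}(K)^\sigma$ put $A_n:=\sum_{|\alpha|=n}S_\alpha\, pBp\, S_\alpha^*$; then $\|A_n\|\le\|B\|$, and the matrix entries $\langle A_nS_\beta k,S_{\beta'}k'\rangle$ stabilize for large $n$ exactly because $\sigma(B)=B$ (for instance $\langle A_nk,k'\rangle=\langle\sigma^n(B)k,k'\rangle=\langle Bk,k'\rangle$ for $k,k'\in K$), so $A_n$ converges in the weak operator topology to an operator $A$ commuting with all $S_i$ and $S_i^*$ with $pAp=B$ and $\|A\|\le\|B\|$; together with $\|pAp\|\le\|A\|$, injectivity, and positivity in both directions this yields the isometric order isomorphism. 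With the convention corrected and this limit argument supplied, your proof is complete and is essentially the argument behind the cited theorem.
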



\begin{definition}\label{def1.1}
Let $K$ be a Hilbert space, $N\geq2$ an integer, and $V_i$ some bounded operators on $K$, $i=0,1,\dots,N-1$. We say that $(K,V_i)_{i=0}^{N-1}$ is a {\it row co-isometry} if
$$\sum_{i=0}^{N-1}V_iV_i^*=I_K.$$
\end{definition}
\begin{definition}
The data in Theorem \ref{dil} will be referred to as the Cuntz dilation $(H, S_i)_{i=0}^{N-1}$ corresponding to the row co-isometry $(K, V_i)_{i=0}^{N-1}$.
\end{definition}

\begin{definition}\label{def2.6}
Let $\ty$ denote the empty word. Let $\Omega$ be the set of all finite words with digits in $\{0,\dots,N-1\}$, including the empty word. For $\omega\in\Omega$, we denote by $|\omega|$ the length of $\omega$. 
 
For a word $\omega:=\omega_1\dots\omega_n\in\Omega$, we will denote by $V_\omega:=V_{\omega_1}\dots V_{\omega_n}$ and similarly for $S_\omega$. Also $V_\ty=I$ and $S_\ty=I$.
\end{definition}

\begin{definition}\label{defm1}
Let $(K,V_i)_{i=0}^{N-1}$ be a row co-isometry. Let $K_0$ be a closed subspace of $K$. Suppose the following conditions are satisfied:
\begin{enumerate}
	\item [(i)] $V_i^*K_0\subset K_0$ for all $i\in\{0,\dots,N-1\}$.
	\item[(ii)] There exists an orthonormal basis $(e_c)_{c\in M}$ for $K_0$, with $M$ finite, some maps $\nu_i:M\rightarrow \bc$, $g_i:M\rightarrow M$, $i\in\{0,\dots,N-1\}$ such that $V_i^*e_c=\nu_i(c)e_{g_i(c)}$, for all $i\in\{0,\dots,N-1\}$ and $c\in M$. 
	\item[(iii)] The maps $g_i$ are one-to-one whenever the transitions are possible, in the sense that, for all $i\in\{0,\dots,N-1\}$ and all $c_1,c_2\in M$ such that $\nu_i(c_1)\neq 0$ and $\nu_i(c_2)\neq 0$, if $g_i(c_1)=g_i(c_2)$ then $c_1=c_2$. 
\end{enumerate}
Then we say that $(V_i)_{i=0}^{N-1}$ {\it acts on $K_0$ as a random walk on the graph $(M,\{g_i\},\{\nu_i\})$}. $M$ is the set of vertices, the maps $g_i$ indicate an edge with label $i$, from a vertex $c\in M$ to the vertex $g_i(c)$, and $\nu_i(c)$ represents a weight for this edge, more precisely $|\nu_i(c)|^2$ is a probability of transition from $c$ to $g_i(c)$ along this edge. 

 Indeed, since $\sum_{i=0}^{N-1}\ip{V_i^*e_c}{V_i^*e_c}=\|e_c\|^2,$ it follows that $\sum_{i=0}^{N-1}|\nu_i(c)|^2=1$.

Note that if there are exactly two distinct $i,i'\in \{0,\ldots, N-1\} $ such that $g_i(c) = g_{i'}(c)$, then the total probability of transition from $c$ to $g_i(c)$ is $|\nu_i(c)|^2 + |\nu_{i'}(c)|^2$. We will use the convention that when we say ``the probability of transition from $c$ to $g_i(c)$,'' we mean the probability of transition from $c$ to $g_i(c)$ (only) through $i$, that is, $|\nu_i(c)|^2$.

\begin{itemize}
\item[$\bullet$] If, for any $c_1,c_2\in M$, there exists $\omega=\omega_1\dots\omega_n$ in $\Omega$ such that $g_{\omega}(c_1):=g_{\omega_n}\circ g_{\omega_{n-1}}\circ \dots \circ g_{\omega_1}(c_1)=c_2$ and $\nu_\omega(c_1):= \nu_{\omega_1}(c_1)\nu_{\omega_2}(g_{\omega_1}(c_1))\dots\nu_{\omega_n}(g_{\omega_{n-1}}\dots g_{\omega_1}(c_1))\neq 0$, then we say that the random walk is {\it irreducible}. In other words, one can reach $c_2$ from $c_1$ through $\omega$ with positive probability $|\nu_\omega(c_1)|^2$.

\item[$\bullet$] We say that $(V_i)_{i=0}^{N-1}$ is {\it simple} on $K_0$ if the only operators $T:K_0\rightarrow K_0$ with 
\begin{equation}
P_{K_0}\left(\sum_{i=0}^{N-1}V_iTV_i^*\right)P_{K_0}=T
\label{eqdefm1_1}
\end{equation}
are $T=\lambda I_{K_0}$, $\lambda\in\bc$; recall that $P_{K_0}$ denotes the orthogonal projection onto the subspace $K_0$.

\item[$\bullet$] If, for any $c\in M$ and $i\in\{0,\dots,N-1\}$, we have $V_i(e_{g_i(c)})=\cj\nu_i(c)e_c$, whenever $\nu_i(c)\neq 0$, we say that $\{V_i\}$ is {\it reversing } on the random walk. 

\item[$\bullet$] If, for any $c_1\neq c_2$ in $M$, there exists $n$ such that, for all $\omega\in \Omega$ with $|\omega|\geq n$, we have $\nu_{\omega}(c_1)=0$ or $\nu_{\omega}(c_2)=0$, then we say that the random walk is {\it separating}. 

\item[$\bullet$] A word $\beta=\beta_0\dots\beta_{p-1}\in\Omega$, $\beta\neq \ty$, is called a {\it cycle word } for $c\in M$, if $g_\beta(c)=c$, and $g_{\beta_k}g_{\beta_{k-1}}\dots g_{\beta_0}(c)\neq c$ for $0\leq k<p-1$ and $\nu_{\beta}(c)\neq 0$. The points $g_{\beta_k}g_{\beta_{k-1}}\dots g_{\beta_0}(c)$, $k=0,\dots,p-1$ are called {\it cycle points}.

\item[$\bullet$] A word $\beta\in\Omega$ is called a {\it loop} for $c\in M$ if $\beta=\ty$, or $g_{\beta}(c)=c$ and $\nu_{\beta}(c)\neq 0$. Note that any loop $\beta\neq \ty$ is of the form $\beta=\beta_1\dots\beta_p$, for some cycle words $\beta_1,\dots,\beta_p$ for $c$.
\end{itemize}
\end{definition}

In Section \ref{secex}, we give several examples of such co-isometries, which include Fourier series on fractal measures, Walsh bases, and a combination of the two. 

\begin{definition}\label{defp3}
Let $X=(X,d)$ be a complete metric space. A map $g:X\rightarrow X$ is called a {\it contraction}, if there exists a constant $0<c<1$, such that 
$$d(g(x),g(y))\leq c d(x,y),\mbox{ for all }x,y\in X.$$

An {\it iterated function system (IFS)} is a finite family of contractions $\{g_i\}_{i=0}^{N-1}$ on $X$. By \cite[Section 3]{Hut81}, given an iterated function system $\{g_i\}_{i=0}^{N-1}$ on $X$, there exists a unique closed bounded set $X_0$ such that 
$$X_0=\bigcup_{i=0}^{N-1}g_i(X_0).$$
Furthermore $X_0$ is compact. $X_0$ is called {\it the attractor} of the iterated function system. 

\end{definition}

\section{Main Results}

%
%

Structured bases regularly arise as the orbit under the action of a system of isometries applied to a collection of vectors.  Since frames are compressions of orthonormal bases, structured frames might analogously arise under the action of a system of co-isometries.  In the case of row co-isometries, the question of whether their action generates a frame depends on the action of the co-isometries themselves as well as the structure of their Cuntz dilation.  We describe sufficient conditions under which frames are generated by row co-isometries in our main results, Theorems \ref{thm1} and \ref{th2.11}.

\begin{theorem}\label{thm1}
Let $(K,V_i)_{i=0}^{N-1}$ be a row co-isometry and let $(H,S_i)_{i=0}^{N-1}$ be its Cuntz dilation. Suppose there is a subspace $K_0$ of $K$ such that $\{V_i\}$ acts on $K_0$ as an irreducible random walk $(M,\{g_i\},\{\nu_i\}_i)$ with orthonormal basis $(e_c)_{c\in M}$. Assume in addition that $\{V_i\}$ is simple on $K_0$.

Fix a point $c\in M$. Define 
$$\Omega_c^{(0)}:=\left\{\omega\in\Omega :\omega\mbox{ does not end in a cycle word for $c$}\right\}.$$
(In particular, $\ty \in\Omega_c^{(0)}$.)

For $n\geq 1$, define $\Omega_c^{(n)}$ to be the set of words that end in exactly $n$ cycle words for $c$, i.e., words $\omega\in\Omega$ of the form $\omega=\omega_0\beta_1\dots\beta_n$, $\omega_0\in\Omega_c^{(0)}$, $\beta_1,\dots\beta_n$ cycle words for $c$.

Let $$\mathcal E_c^{(n)}:=\left\{S_\omega e_c : \omega\in \Omega_c^{(n)}\right\},\quad H_c^{(n)}:=\Span\, \mathcal E_c^{(n)},\quad (n\geq 0).$$
Then

\begin{enumerate}
	\item[(a)] $\mathcal E_c^{(n)}$ is an orthonormal basis for $H_c^{(n)}$, for all $n\geq 0$.
	\item[(b)] $H_c^{(n)}\subseteq H_c^{(n+1)}$ for all $n\geq 0$.
	\item[(c)] $\cup_{n\geq0}\mathcal E_c^{(n)}=\{S_\omega e_c : \omega\in \Omega\}$.
	\item[(d)] Let $V:=\Span\{S_\omega e_c :\omega\in\Omega\}=\overline{\cup_{n\geq 0}H_c^{(n)}}$. Then $V$ is invariant for the Cuntz representation $\{S_i\}$. Also $(V, S_i|_V)_{i=0}^{N-1}$ is the Cuntz dilation of the row co-isometry $(K_0,P_{K_0}V_iP_{K_0})_{i=0}^{N-1}$ and it is irreducible.

\end{enumerate}

In addition, the following statements are equivalent

\begin{enumerate}
	\item[(i)] $\Span\{S_\omega e_c :\omega\in \Omega\}=H$.
	\item[(ii)] The Cuntz representation $(H,S_i)_{i=0}^{N-1}$ is irreducible.
	\item[(iii)] The only operators $T:K\rightarrow K$ with 
	$$\sum_{i=0}^{N-1}V_iTV_i^*=T,$$
	are $T=\lambda I_K$, $\lambda\in\bc$.
	\end{enumerate}
	
    If $\{V_i\}$ is reversing, then we also have:
	\begin{enumerate}
		\item[(e)] For all $n\geq 0$ and all $v\in K$,
		\begin{equation}
\left\|P_{H_c^{(n)}}v \right\|^2=\sum_{\omega\in\Omega_c^{(0)}}\left|\ip{V_{\omega_0}e_c}{v}\right|^2=\left\|P_{H_c^{(0)}}v \right\|^2=\left\|P_V v\right\|^2.
\label{eqt4}
\end{equation}

	\end{enumerate}
and the previous statements are equivalent to 
	
	\begin{enumerate}
		\item [(iv)] $\{V_\omega e_c : \omega\in\Omega_c^{(0)}\}$ is a Parseval frame for $K$. 
	\end{enumerate}
\end{theorem}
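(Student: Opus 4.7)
The plan is to drive everything from two workhorse identities, and then peel off the parts in the order (a), (c); (b); (d); the equivalence (i)--(iii); and finally (e), (iv) under reversing. The first identity is $S_\omega^* v = V_\omega^* v$ for every $v \in K$, iterated from \eqref{e1}, which reduces $\ip{S_\omega e_c}{S_{\omega\tau} e_c}$ via Cuntz orthogonality to the explicit scalar $\nu_\tau(c)\,\delta_{g_\tau(c),c}$. The second is the first-passage expansion $e_{c'} = \sum_\tau \nu_\tau(c')\,S_\tau e_c$, obtained by iterating $\sum_i S_iS_i^* = I$ on $e_{c'}$ and collecting the paths $\tau$ first passing from $c'$ to $c$. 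Its norm convergence comes from Cuntz orthogonality: the squared norm of the remainder at step $m$ equals the probability that the finite-state Markov chain with transitions $(|\nu_i(\cdot)|^2)$ --- stochastic because \eqref{e0} applied to $e_c$ gives $\sum_i|\nu_i(c)|^2 = 1$ --- has not reached $c$ by time $m$, which tends to $0$ by recurrence of finite irreducible chains. Taking $c'=c$ yields $e_c = \sum_{\beta\text{ cycle}} \nu_\beta(c)\,S_\beta e_c$.

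For (a), incomparable $\omega,\omega'$ give $\ip{S_\omega e_c}{S_{\omega'} e_c} = 0$ by Cuntz orthogonality, while $\omega' = \omega\tau$ with $\tau\neq\ty$ forces $\tau$ to be a nonempty loop for the inner product to survive, and any nonempty loop decomposes into cycle words --- so $\omega'$ would have strictly more tail cycle words than $\omega$, contradicting $\omega,\omega'\in\Omega_c^{(n)}$. Part (c) is a finite combinatorial observation. For (b), multiplying the $c'=c$ identity on the left by $S_\omega$ gives $S_\omega e_c = \sum_\beta \nu_\beta(c)\,S_{\omega\beta} e_c$, and $\omega\beta\in\Omega_c^{(n+1)}$ whenever $\omega\in\Omega_c^{(n)}$.

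For (d), the first-passage identity applied to each $c'\in M$ places $e_{c'}$ in $V$; hence $V$ is $S_i^*$-invariant (it is visibly $S_i$-invariant), $K_0\subseteq V$, and $K_0$ is cyclic in $V$. The compression $P_{K_0} S_i|_{K_0} = P_{K_0} V_i P_{K_0}$ is read off from \eqref{e1}, so the uniqueness clause of Theorem \ref{dil} identifies $(V, S_i|_V)_{i=0}^{N-1}$ as the Cuntz dilation of $(K_0, P_{K_0}V_iP_{K_0})_{i=0}^{N-1}$, and its irreducibility matches, via the commutant isomorphism of Theorem \ref{dil}, the simplicity hypothesis. The same isomorphism applied to the full dilation gives (ii)$\Leftrightarrow$(iii); (i)$\Rightarrow$(ii) is (d) with $V=H$; (ii)$\Rightarrow$(i) because $V$ is a nonzero subrepresentation.

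Under reversing, condition (iii) promotes by induction to injectivity of every $g_\omega$ on the nonzero-weight vertices, so for each cycle word $\beta$ the unique preimage of $c$ under $g_\beta$ is $c$ itself, giving $V_\beta e_c = \overline{\nu_\beta(c)}\,e_c$; consequently $V_\omega e_c = \overline{\nu_{\beta_1}(c)\cdots\nu_{\beta_n}(c)}\,V_{\omega_0} e_c$ for $\omega=\omega_0\beta_1\cdots\beta_n\in\Omega_c^{(n)}$. Combining $\ip{v}{S_\omega e_c} = \ip{v}{V_\omega e_c}$ for $v\in K$, the orthonormality from (a), and the recurrence identity $\sum_\beta|\nu_\beta(c)|^2 = 1$, the Parseval sum for $H_c^{(n)}$ collapses onto that for $H_c^{(0)}$, yielding (e); letting $n\to\infty$ identifies this common value with $\|P_V v\|^2$. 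Then (iv) unwinds to $\|P_V v\| = \|v\|$ for all $v\in K$, i.e.\ $K\subseteq V$, which forces $V=H$ by cyclicity of $K$ --- precisely (i). The one step I expect to require real care is the first-passage identity: it is where the random-walk language of Definition \ref{defm1} is genuinely converted into norm-convergent decompositions in $H$; once it is in place, everything else is Cuntz-algebra bookkeeping and uses of Theorem \ref{dil}.
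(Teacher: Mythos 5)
Your plan reproduces the paper's proof almost step for step: your two ``workhorse identities'' are the paper's Lemma \ref{l1} together with \eqref{eqlem2.1}, and the first-passage expansion of Lemma \ref{lem4.16}, whose norm convergence is obtained there exactly as you describe, from recurrence of the finite irreducible chain (Lemmas \ref{lem4.14} and \ref{lem4.15}); parts (b), (c), (d), the equivalence (ii)$\Leftrightarrow$(iii) via the commutant clause of Theorem \ref{dil}, and the reversing computation for (e) all follow the paper's argument. Your (iv)$\Rightarrow$(i) — Parseval forces $\|P_{H_c^{(0)}}v\|=\|v\|$ for $v\in K$, hence $K\subseteq V$, and then cyclicity of $K$ plus invariance of $V$ forces $V=H$ — is a mild and legitimate shortcut of the paper's route, which passes through Lemma \ref{lem2.10}, the uniqueness of the dilation of $(K,V_i)$ and irreducibility of the representation on $V$.

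The one place where your sketch elides a real argument is in (a): from $\omega'=\omega\tau$ with $\tau$ a nonempty loop you conclude that $\omega'$ ``has strictly more tail cycle words than $\omega$, contradicting $\omega'\in\Omega_c^{(n)}$.'' That contradiction presupposes that the number of terminal cycle words of a word is well defined, i.e.\ that the factorization $\omega=\omega_0\beta_1\cdots\beta_n$ with $\omega_0\in\Omega_c^{(0)}$ and $\beta_j$ cycle words for $c$ is unique. This is precisely Lemma \ref{leminj} in the paper, and it is exactly where the injectivity requirement (iii) of Definition \ref{defm1} enters; Remark \ref{remmc} exhibits a row co-isometry violating that injectivity in which one and the same word ends both in one and in two cycle words, so the count is genuinely ambiguous in general and the orthogonality in (a) can fail. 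Since the injectivity is part of your standing hypotheses the claim is true, but the unique-factorization step has to be proved, not asserted. (A minor remark: in (e), the identity $V_\beta e_c=\cj{\nu_\beta(c)}\,e_c$ for a cycle word $\beta$ needs only the reversing property applied backwards along the cycle; the injectivity of the maps $g_\omega$ that you invoke there is not needed for that computation.)
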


For the second part of the paper, we will impose some extra conditions on the co-isometry $(K,V_i)_{i=0}^{N-1}$.

\begin{assumptions}\label{as1}

Let $(K,V_i)_{i=0}^{N-1}$ be a row co-isometry. We make the following assumptions

\begin{equation}
\mbox{ There exists a continuous map $e:\mathcal T\rightarrow K$ from a complete metric space $\mathcal T$, such that}
\label{eqas1}
\end{equation}
$\Span\{ e(t) : t\in\mathcal T\}=K$, and $\|e_t\|=1$ for all $t\in\mathcal T$.
\begin{equation}
\mbox{For any $i\in\{0,\dots,N-1\}$ and $t\in\mathcal T$, } V_i^*e_t=\nu_i(t)e_{g_i(t)},\mbox{ for some $\nu_i(t)\in\bc$ and $g_i(t)\in\mathcal T$.}
\label{eqas2}
\end{equation}
where $e_t:=e(t)$.
\begin{equation}
\mbox{ The maps $g_i$ are contractions.}
\label{eqas3}
\end{equation}

\end{assumptions}

Under these assumptions, one can define a random walk on the set $\mathcal T$, where the transition from $t$ to $g_i(t)$ is given probability $|\nu_i(t)|^2$. Note that, 
$$1=\|e_t\|^2=\sum_{i=0}^{N-1}\ip{V_i^*e_t}{V_i^*e_t}=\sum_{i=0}^{N-1}|\nu_i(t)|^2\ip{e_t}{e_t}=\sum_{i=0}^{N-1}|\nu_i(t)|^2.$$
An important role in the study of the Cuntz dilation associated to the co-isometry $\{V_i\}$ and the Parseval frames generated by it, is played by the minimal invariant sets associated to this random walk. We define these and more here.

\begin{definition}\label{def3.4}
For a point $t\in\mathcal T$, and $i\in\{0,\dots,N-1\}$, we say that the transition $t\rightarrow g_i(t)$ is {\it possible through} $i$ if $\nu_i(t)\neq 0$. We also write $t\stackrel{i}{\rightarrow} g_i(t)$ or $t\rightarrow g_i(t)$.

\begin{remark}\label{rem2.9}
    Note that if $t' = g_i(t) = g_{i'}(t)$, the transition from $t\rightarrow t'$ may be possible through either $i$ or $i'$. We will make the convention that if we write ``$t\rightarrow g_i(t)$ is possible'', we mean ``$t\rightarrow g_i(t)$ is possible through $i$,'' to distinguish the paths along $i$ and $i'$.
\end{remark}

Given a word $\omega=\omega_1\dots\omega_n$, we define $g_\omega=g_{\omega_n}\circ\dots\circ g_{\omega_1}$. We say that the transition $t\rightarrow g_\omega(t)$ is {\it possible (in several steps) through $\omega$}, if all the transitions $t\rightarrow g_{\omega_1}(t)\rightarrow g_{\omega_2}g_{\omega_1}(t)\rightarrow\dots\rightarrow g_{\omega_n}\dots g_{\omega_1}(t)$ are possible.

We define
$$\nu_{\omega}(t)=\nu_{\omega_1}(t)\nu_{\omega_2}(g_{\omega_1}(t))\dots\nu_{\omega_n}(g_{\omega_{n-1}}\dots g_{\omega_1}(t)).$$
$|\nu_\omega(t)|^2$ is the probability of transition from $t$ to $g_\omega(t)$ through $\omega$ (passing through\\ $g_{\omega_1}(t)$, $g_{\omega_2}g_{\omega_1}(t),\dots, g_{\omega_n}\dots g_{\omega_1}(t)$). 

Note that the transition $t\rightarrow g_\omega(t)$ is possible in several steps, if and only if $\nu_\omega(t)\neq 0$.

A subset $M$ of $\mathcal T$ is called {\it invariant}, if, for all $t$ in $M$ and $i\in\{0,\dots,N-1\}$, if the transition $t\rightarrow g_i(t)$ is possible, then $g_i(t)$ is in $M$. We define the orbit of $t\in \mathcal T$ to be 
$$\mathcal O(t)=\left\{ t'\in\mathcal T : \mbox{ There exists $\omega\in\Omega$ such that the transition $t\rightarrow g_\omega(t)=t'$ is possible}\right\}.$$

A closed invariant subset $M$ of $\mathcal{T}$ is {\it minimal} if there does not exist any proper closed invariant subset of $M$. Equivalently, as in Lemma \ref{lem4.11-0}, $M=\cj{\mathcal O(x_0)}$ for any $x_0\in M$.

Given an invariant set $M$, we define the subspaces $K(M)$ of $K$ and $H(M)$ of $H$ by
$$K(M)=\Span\{ e_t : t\in M\},$$
$$H(M)=\Span\{S_\omega e_t : t\in M, \omega\in\Omega\}.$$
\end{definition}

\begin{theorem}\label{th3.5}
Suppose that the assumptions \eqref{eqas1}--\eqref{eqas3} hold. Then there are finitely many minimal compact invariant sets.  If $M_1,\dots, M_p$ is the complete list of minimal compact invariant sets, then the spaces $K(M_j)$ are invariant for $V_i^*$, $i=0,\dots, N-1$ and the spaces $H(M_j)$ are invariant for the representation $(S_i)_{i=0}^{N-1}$.
The spaces $H(M_j)$ are mutually orthogonal and 
$$\bigoplus_{j=1}^p H(M_j)=H.$$
The sub-representations $(S_i)_{i=0}^{N-1}$ on $H(M_j)$, $j=1,\dots,p$ are irreducible and disjoint. (Recall that two representations $(H,S_i)_i$, $(H',S_i')_i$ are called disjoint if there is no non-zero intertwining operator $W:H\rightarrow H'$, $WS_i=S_i'W$, $WS_i^*=S_i'^*W$ for all $i$.)

\end{theorem}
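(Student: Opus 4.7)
The plan is to proceed in four stages: existence of minimal compact invariant sets, their finiteness, the invariance together with the orthogonal decomposition of the spaces $H(M_j)$, and finally irreducibility plus disjointness of the sub-representations.

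For existence, I would fix any $t\in\mathcal T$ and form the $\omega$-limit set
\[
\omega(t)=\bigcap_{n\ge 1}\overline{\{g_\omega(t):|\omega|\ge n,\ \nu_\omega(t)\ne 0\}}.
\]
Boundedness of the orbit of $t$ follows by comparing with the orbit of a fixed reference point and applying the uniform contraction constant $c=\max_i c_i<1$, so $\omega(t)$ is compact and non-empty. Continuity of each $\nu_i$ (which holds because $\nu_i(t)=\langle V_i^*e_t,e_{g_i(t)}\rangle$ is continuous in $t$) together with continuity of the $g_i$'s yields invariance of $\omega(t)$. A standard Zorn's lemma argument on closed invariant subsets (ordered by reverse inclusion, with lower bounds via finite intersection in a compact set) then produces at least one minimal compact invariant set. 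Two distinct minimal sets are automatically disjoint, since a common point would generate an orbit closure which by minimality coincides with both.

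For finiteness, which I expect to be the main obstacle, I would first show that every minimal invariant set $M$ lies in the Hutchinson attractor $A$ of $(g_i)_{i=0}^{N-1}$: for $t\in M$ and any allowable infinite code $(\omega_n)$, the estimate $d(g_{\omega_1\cdots\omega_n}(t),g_{\omega_1\cdots\omega_n}(t_0))\le c^n d(t,t_0)$ forces the orbit to converge to a point $\pi(\omega)\in A$, which also lies in $M$ by closedness; minimality then gives $M=\omega(t)\subseteq A$. Classifying each minimal $M$ by the subset $I_M=\{i:\nu_i\not\equiv 0\text{ on }M\}$ of active indices and realizing $M$ as the attractor of the corresponding restricted sub-IFS reduces the problem to finitely many combinatorial configurations. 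The delicate point is that allowability is point-dependent, so one must carefully use minimality to promote pointwise $\nu_i$-data to set-wise information.

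Invariance of $K(M_j)$ under $V_i^*$ is immediate from $V_i^*e_t=\nu_i(t)e_{g_i(t)}$ and the definition of invariance; for $H(M_j)$, $S_i$-invariance is definitional and $S_i^*$-invariance follows from $S_i^*e_t=V_i^*e_t\in K(M_j)$ combined with the Cuntz relations $S_i^*S_k=\delta_{ik}I$, so each $H(M_j)$ is reducing. The core of the orthogonality claim is $K(M_j)\perp K(M_k)$ for $j\ne k$: iterating $I=\sum_iV_iV_i^*$ gives
\[
\langle e_t,e_s\rangle=\sum_{|\omega|=n}\nu_\omega(t)\overline{\nu_\omega(s)}\langle e_{g_\omega(t)},e_{g_\omega(s)}\rangle,\qquad t\in M_k,\ s\in M_j,
\]
and a nonvanishing term requires $\nu_\omega(t)\ne 0\ne\nu_\omega(s)$, hence $g_\omega(t)\in M_k$ and $g_\omega(s)\in M_j$ with $d(g_\omega(t),g_\omega(s))\ge d(M_j,M_k)>0$; but the contractive estimate $d(g_\omega(t),g_\omega(s))\le c^n d(t,s)$ rules this out for large $n$, so $\langle e_t,e_s\rangle=0$. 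A parallel calculation with $\langle S_\omega e_t,S_{\omega'}e_s\rangle$ using the Cuntz relations then upgrades this to $H(M_j)\perp H(M_k)$. For $\bigoplus_jH(M_j)=H$, iterate $\sum_iS_iS_i^*=I$ to depth $n$ so that $e_t=\sum_{|\omega|=n}\nu_\omega(t)S_\omega e_{g_\omega(t)}$; since allowable orbits concentrate near $\bigcup_jM_j$ as $n\to\infty$ (a consequence of the finiteness and attractor structure together with a transience argument), these partial sums approach $\bigoplus_jH(M_j)$, and cyclicity of $K$ closes the span.

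For the last stage, the compression $V_i^{(j)}:=P_{K(M_j)}V_i P_{K(M_j)}$ on $K(M_j)$ is a row co-isometry whose Cuntz dilation is exactly $(H(M_j),S_i|_{H(M_j)})$; the induced random walk lives on $M_j$ and is irreducible by minimality. Applying the equivalence (iii)$\Leftrightarrow$(ii) of Theorem~\ref{thm1} yields irreducibility of each sub-representation. For disjointness, any bounded intertwiner $W:H(M_j)\to H(M_k)$ compresses to $pWp:K(M_j)\to K(M_k)$, which would have to carry the random walk on $M_j$ to the one on $M_k$; tracking the action of $V_\omega^*$ along allowable words and using $K(M_j)\perp K(M_k)$ together with the positive distance $d(M_j,M_k)$ forces $pWp=0$, and then cyclicity and the commutant isomorphism of Theorem~\ref{dil} force $W=0$.
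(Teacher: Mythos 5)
Your first half (existence of minimal compact invariant sets via orbit closures and Zorn, containment in the Hutchinson attractor, invariance of $K(M_j)$ and $H(M_j)$, and the orthogonality of the $K(M_j)$ and $H(M_j)$ via the contraction estimate and the Cuntz relations) is sound and essentially matches the paper's Lemmas \ref{lem4.11-0}, \ref{lem3.5}--\ref{lem3.7}. The first genuine gap is your argument that there are only finitely many minimal sets: it rests on the claim that each minimal $M$ is ``realized as the attractor of the restricted sub-IFS'' on its active indices $I_M=\{i:\nu_i\not\equiv 0\mbox{ on }M\}$. That claim is false, precisely because allowability is point-dependent (the issue you flag but do not resolve): in Example \ref{ex4.11} the minimal set $\{(-1,0),(-4,0)\}$ has active indices $\{0,1,3\}$, yet it is a two-point set, not the uncountable attractor of $\{g_0,g_1,g_3\}$; and nothing in your reduction rules out two distinct minimal sets sharing the same index set. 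The paper's missing idea is a uniform separation: uniform continuity of the $|\nu_i|^2$ on the attractor together with $\sum_i|\nu_i(x)|^2=1$ produces a $\delta>0$ with $\dist(M_1,M_2)\geq\delta$ for any two distinct minimal sets (Lemma \ref{lemdist}), and finiteness follows since all of them lie in the compact attractor.

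The second half also has gaps. For irreducibility you invoke Theorem \ref{thm1} (ii)$\Leftrightarrow$(iii), but its framework (Definition \ref{defm1}) requires a \emph{finite} vertex set whose vectors $e_c$ form an orthonormal basis of $K_0$, plus the hypothesis that $\{V_i\}$ is simple; in Theorem \ref{th3.5} the minimal sets may be infinite (Example \ref{ex4.2.1}), the vectors $e_t$, $t\in M_j$, need not be orthogonal (orthogonality is proved only for finite minimal sets, Lemma \ref{lem4.17}), and simplicity is not a consequence of irreducibility of the walk alone (compare Proposition \ref{pr2_14}, which also needs the separating property). For completeness you assert that the mass $|\nu_\omega(t)|^2$ concentrates near $\bigcup_j M_j$ (``a transience argument''), which is a nontrivial unproved statement. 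For disjointness, your computation only yields $\ip{We_t}{e_s}=0$ for $t\in M_j$, $s\in M_k$, i.e. $We_t\perp K(M_k)$; since $We_t$ lies in the much larger space $H(M_k)$, this gives neither $pWp=0$ nor $W=0$. What closes all three steps in the paper is the Ruelle-operator machinery: $h_A(t)=\ip{Ae_t}{e_t}$ is a continuous fixed point of the transfer operator, vanishing of $h_A$ on all minimal sets propagates to all of $\mathcal T$ (Lemma \ref{lem4.11}, a max/min principle plus Zorn), and $h_A=0$ forces $A=0$ (Lemma \ref{lem4.8}, via the contraction estimate of Lemma \ref{lem4.7.1}). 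Without an analogue of these two lemmas, your completeness, irreducibility, and disjointness arguments do not go through.
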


\begin{theorem}\label{th2.11}
Assume in addition that the maps $\{g_i\}$ are one-to-one and that all the minimal compact invariant sets are finite.  Pick a point $c_j$ in $M_j$ for every $j=1,\dots,p$. Then $\{V_i\}$ acts as an irreducible, separating random walk $(M_j,\{g_i|_{M_j}\},\{\nu_i|_{M_j}\})$ on the spaces $K(M_j)$. If $\{V_i\}$ is reversing on each $K(M_j)$, then $\{V_\omega e_{c_j} : \omega\in\Omega_{c_j}^{(0)},j=1,\dots,p-1\}$ is a Parseval frame for $K$.
\end{theorem}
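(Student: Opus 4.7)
The plan is to verify the first claim by checking the conditions of Definition \ref{defm1} for the walk on each $M_j$, and then reduce the Parseval frame claim to an application of Theorem \ref{thm1} with $K_0 = K(M_j)$, summing the resulting identities with the aid of $H = \bigoplus_j H(M_j)$ from Theorem \ref{th3.5}. Irreducibility of the walk on $M_j$ is immediate from minimality: the orbit of any $c \in M_j$ is invariant and contained in $M_j$, so it equals $M_j$. For the separating property, take distinct $c_1, c_2 \in M_j$ and a word $\omega$ with $\nu_\omega(c_1)\nu_\omega(c_2) \neq 0$; invariance of $M_j$ places $g_\omega(c_1), g_\omega(c_2)$ in $M_j$, the one-to-one hypothesis keeps them distinct under $g_\omega$, yet $d(g_\omega(c_1), g_\omega(c_2)) \leq r^{|\omega|} d(c_1, c_2) \to 0$ contradicts the positive minimum distance between distinct points of the finite set $M_j$ once $|\omega|$ is large. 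Orthonormality of $\{e_t\}_{t \in M_j}$, the remaining requirement of Definition \ref{defm1}, then follows by iterating
\[
\ip{e_s}{e_t} = \sum_{i=0}^{N-1} \nu_i(s)\,\cj{\nu_i(t)}\,\ip{e_{g_i(s)}}{e_{g_i(t)}}
\]
(obtained from $\sum_i V_iV_i^* = I_K$ and $V_i^* e_t = \nu_i(t) e_{g_i(t)}$) to $\ip{e_s}{e_t} = \sum_{|\omega|=n} \nu_\omega(s)\,\cj{\nu_\omega(t)}\,\ip{e_{g_\omega(s)}}{e_{g_\omega(t)}}$; for $s \neq t$ in $M_j$ the separating property just established kills every summand once $n$ is large.

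With the random-walk structure in hand, I would apply Theorem \ref{thm1} to the original co-isometry $(K, V_i)$ with $K_0 = K(M_j)$ and base point $c_j$. The invariance $V_i^*K(M_j) \subseteq K(M_j)$ is supplied by Theorem \ref{th3.5}. The sub-representation $(H(M_j), S_i|_{H(M_j)})$ is a Cuntz dilation of the compressed co-isometry $(K(M_j), P_{K(M_j)}V_iP_{K(M_j)})$ (it is $S$-invariant, contains $K(M_j)$ cyclically by construction, and $S_i^*K(M_j)\subseteq K(M_j)$), and is irreducible by Theorem \ref{th3.5}; the commutant isomorphism in Theorem \ref{dil} then forces the fixed points of $T \mapsto \sum_i P_{K(M_j)} V_i T V_i^* P_{K(M_j)}$ in $\B(K(M_j))$ to be scalars, which is the simplicity condition on $K_0$. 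Under the reversing hypothesis, Theorem \ref{thm1}(e) therefore yields, for every $v \in K$,
\[
\sum_{\omega \in \Omega_{c_j}^{(0)}} \bigl|\ip{V_\omega e_{c_j}}{v}\bigr|^2 = \|P_{V^{(j)}} v\|^2,
\]
where $V^{(j)} := \Span\{S_\omega e_{c_j} : \omega \in \Omega\}$. By Theorem \ref{thm1}(d), $(V^{(j)}, S_i|_{V^{(j)}})$ is itself a Cuntz representation, so $V^{(j)}$ is both $S$- and $S^*$-invariant, i.e., reducing inside $H$; since $V^{(j)} \subseteq H(M_j)$ is nonzero and the sub-representation on $H(M_j)$ is irreducible, this forces $V^{(j)} = H(M_j)$. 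Summing over $j = 1, \dots, p$ and invoking $H = \bigoplus_j H(M_j)$ then yields
\[
\sum_{j,\omega} \bigl|\ip{V_\omega e_{c_j}}{v}\bigr|^2 = \sum_j \|P_{H(M_j)} v\|^2 = \|v\|^2,
\]
the desired Parseval frame identity for $K$.

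The main obstacle I anticipate is the orthonormality step for $\{e_t\}_{t \in M_j}$: Assumption \ref{as1} gives the $e_t$ only as unit vectors with no a priori orthogonality, so the iterated Gram identity is the only lever available, and it collapses to zero only once the separating property has been established — which itself chains finiteness of $M_j$, contraction of the $g_i$'s, and injectivity. A secondary subtle point is the set-theoretic identification $V^{(j)} = H(M_j)$ inside $H$: the uniqueness in Theorem \ref{dil} only gives unitary equivalence, so one instead argues via reducing subspaces of the irreducible sub-representation on $H(M_j)$.
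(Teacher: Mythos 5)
Your proof is correct and follows essentially the same route as the paper's: the separating property and the orthonormality of $\{e_t\}_{t\in M_j}$ are obtained exactly as in the paper's Lemma \ref{lem4.17} (contraction of the $g_i$ plus injectivity plus finiteness of $M_j$, then the iterated Gram identity), irreducibility of the walk comes from minimality of $M_j$, the identification $\Span\{S_\omega e_{c_j} : \omega\in\Omega\}=H(M_j)$ uses the irreducibility of the sub-representation supplied by Theorem \ref{th3.5}, and the Parseval identity is the reversing computation of Theorem \ref{thm1}(e) summed over $j$ against the decomposition $H=\bigoplus_j H(M_j)$ (the paper simply re-derives that computation with the sum over $j$ built in, rather than citing (e)). The one genuine departure is the simplicity step: you deduce it from the commutant isomorphism of Theorem \ref{dil}, after verifying that $(H(M_j),S_i|_{H(M_j)},P_{K(M_j)})$ is the Cuntz dilation of the compressed co-isometry $(K(M_j),P_{K(M_j)}V_iP_{K(M_j)})$ and using the irreducibility from Theorem \ref{th3.5}, whereas the paper invokes Proposition \ref{pr2_14} (irreducible plus separating implies simple); both are legitimate, and your route trades the elementary matrix-entry argument of Proposition \ref{pr2_14} for the dilation identification, which you do check (cyclicity of $K(M_j)$, $S_i^*$-invariance, and the compression relation).
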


\section{Proofs}

\begin{proof}[Proof of Theorem \ref{thm1}]

We begin with a general lemma which shows that projections of iterations of the Cuntz dilation isometries are the corresponding iterations of the row co-isometries. 

\begin{lemma}\label{l1} Let $(H, S_i)_{i=0}^{N-1}$ be the Cuntz dilation of the system $(K, V_i)_{i=0}^{N-1}$, and $P_K:H\to K$ the projection onto $K$. 
For $\omega\in\Omega$,  
\begin{equation}\label{e3}
V_{\omega}(k)=P_KS_{\omega}(k)\text{ for all }k\in K
\end{equation}
\end{lemma}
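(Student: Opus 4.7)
The plan is to prove the lemma by induction on the word length $n = |\omega|$, peeling off one letter at a time and using the two identities handed to us by Theorem \ref{dil}: first, $V_i = pS_i|_K$, and second, the key identity $pS_ip = pS_i$ (equivalently, $pS_i$ vanishes on $K^\perp$).

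For the base case, I would take $|\omega| = 0$ (empty word), where $V_\emptyset = I_K$ and $S_\emptyset = I_H$ by convention, so $V_\emptyset k = k = pk = pS_\emptyset k$ for $k \in K$. (Equivalently, one can start at $|\omega|=1$, which is just the hypothesis $V_i = pS_i|_K$.)

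For the inductive step, assume the identity holds for all words of length $n$, and consider $\omega = \omega_1 \omega'$ with $|\omega'| = n$. Then, since $V_{\omega'}k \in K$, I can write
\[
V_\omega k \;=\; V_{\omega_1}\bigl(V_{\omega'}k\bigr) \;=\; pS_{\omega_1}\bigl(V_{\omega'} k\bigr) \;=\; pS_{\omega_1}\bigl(p S_{\omega'} k\bigr),
\]
where the second equality uses $V_{\omega_1} = pS_{\omega_1}|_K$ applied to the element $V_{\omega'}k \in K$, and the third equality uses the inductive hypothesis $V_{\omega'}k = pS_{\omega'}k$. Now I invoke the identity $pS_{\omega_1}p = pS_{\omega_1}$ from \eqref{e1}, applied to the vector $S_{\omega'}k \in H$, to collapse $pS_{\omega_1}\bigl(pS_{\omega'}k\bigr) = pS_{\omega_1}S_{\omega'}k = pS_\omega k$. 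This closes the induction.

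There is no real obstacle here; the only thing to be careful about is the order of composition in the definition of $V_\omega$ and $S_\omega$ (so that "peeling off $\omega_1$" is consistent with $V_\omega = V_{\omega_1} V_{\omega_2}\cdots V_{\omega_n}$ acting as composition from the left) and that the intermediate vector $V_{\omega'}k$ really lies in $K$ so that the hypothesis $V_{\omega_1}|_K = pS_{\omega_1}|_K$ applies. Both are immediate from the fact that each $V_i$ maps $K$ into $K$.
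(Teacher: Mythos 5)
Your proof is correct and follows essentially the same route as the paper: induction on the word length, using $V_i = pS_i|_K$ on the intermediate vector (which lies in $K$ since each $V_i$ maps $K$ to $K$) together with the identity $pS_ip = pS_i$ from \eqref{e1} to collapse the projection. If anything, your version is slightly more explicit than the paper's two-line argument about which letter is peeled off and why only the single-letter identities are needed.
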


\begin{proof}
Let $\omega=\omega_1\omega_2$ and $k\in K$. Using \eqref{e1} and $P_KS_i(k)\in K$, we have
$$V_{\omega_1}V_{\omega_2}(k)=V_{\omega_1} P_KS_{\omega_2}(k)
=P_KS_{\omega_1} P_KS_{\omega_2}(k)=P_KS_{\omega_1}S_{\omega_2}(k)
$$
By induction, \eqref{e3} follows.
\end{proof}

We will also need the next general result which shows when the vectors $S_\omega v$ are orthogonal. 

\begin{lemma}\label{lemo}
If $(H,S_i)_{i=0}^{N-1}$ is a representation of the Cuntz algebra, $v,v'\in H$, $\omega,\omega'\in\Omega$ and $\ip{S_{\omega}v}{S_{\omega'}v'}\neq 0$, then $\omega$ is a prefix of $\omega'$ or vice-versa, i.e., there exists a word $\beta\in\Omega$ such that $\omega'=\omega\beta$ or $\omega=\omega'\beta$. 
\end{lemma}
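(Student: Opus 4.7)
The plan is to reduce the statement to a computation of $S_{\omega}^{*}S_{\omega'}$ using only the Cuntz orthogonality relation $S_{j}^{*}S_{i}=\delta_{j,i}I$, and then to proceed by induction on $\min(|\omega|,|\omega'|)$. First I would rewrite the inner product as
$$\ip{S_{\omega}v}{S_{\omega'}v'}=\ip{v}{S_{\omega}^{*}S_{\omega'}v'},$$
so it suffices to show: if neither of $\omega,\omega'$ is a prefix of the other, then $S_{\omega}^{*}S_{\omega'}=0$.

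Next I would expand, writing $\omega=\omega_{1}\dots\omega_{n}$ and $\omega'=\omega'_{1}\dots\omega'_{m}$,
$$S_{\omega}^{*}S_{\omega'}=S_{\omega_{n}}^{*}\cdots S_{\omega_{2}}^{*}\bigl(S_{\omega_{1}}^{*}S_{\omega'_{1}}\bigr)S_{\omega'_{2}}\cdots S_{\omega'_{m}}.$$
The innermost factor is $S_{\omega_{1}}^{*}S_{\omega'_{1}}=\delta_{\omega_{1},\omega'_{1}}I$. If $\omega_{1}\neq\omega'_{1}$ the entire expression collapses to $0$, and moreover neither word is a prefix of the other, which handles the base case. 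If $\omega_{1}=\omega'_{1}$, the innermost factor is the identity, and the expression simplifies to $S_{\omega_{2}\dots\omega_{n}}^{*}S_{\omega'_{2}\dots\omega'_{m}}$. This allows an induction on $\min(n,m)$.

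Carrying the induction forward, either we eventually peel off a matching pair of letters until one of the two words is exhausted, or at some step we encounter a mismatch $\omega_{k}\neq\omega'_{k}$ which kills the product. In the exhausted case, say WLOG $n\leq m$ and $\omega_{k}=\omega'_{k}$ for $k=1,\dots,n$, we get $\omega'=\omega\beta$ with $\beta=\omega'_{n+1}\dots\omega'_{m}\in\Omega$ (allowing $\beta=\emptyset$), and the residual operator is $S_{\omega}^{*}S_{\omega'}=S_{\beta}$. In the mismatch case, $S_{\omega}^{*}S_{\omega'}=0$. Taking the contrapositive: if $\ip{S_{\omega}v}{S_{\omega'}v'}\neq 0$, then the mismatch case cannot occur, so one of the two words must be a prefix of the other, as required.

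I do not anticipate any real obstacle; the entire argument is purely algebraic, relying only on the orthogonality half of the Cuntz relations. The only thing to be mildly careful about is the bookkeeping of the induction (that the innermost cancellation indeed corresponds to comparing letters from the \emph{beginning} of each word, so prefix, not suffix, is the correct conclusion).
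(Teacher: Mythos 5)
Your proposal is correct and follows essentially the same route as the paper's proof: both arguments cancel the common initial segment of the two words using the orthogonality relation $S_j^*S_i=\delta_{j,i}I$ and observe that the first mismatched letter forces the inner product to vanish because the ranges of distinct $S_i$ are orthogonal. The only difference is presentational — you organize the cancellation as an inductive computation of $S_\omega^*S_{\omega'}$, while the paper performs the same cancellation directly inside the inner product at the first index of disagreement.
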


\begin{proof}
Let $n=|\omega|$, $n'=|\omega'|$. If there exists $k$ such that $\omega_k\neq \omega'_k$ then take the smallest such $k$, and we have 
$$\ip{S_{\omega_1\dots\omega_{k-1}\omega_k\dots\omega_n}v}{S_{\omega_1'\dots\omega_{k-1}'\omega_k'\dots\omega_{n'}'}v'}
=\ip{S_{\omega_k\dots\omega_n}v}{S_{\omega_k'\dots\omega_{n'}'}v'}=0.$$
This is impossible, therefore $\omega$ is a prefix of $\omega'$ or vice-versa. 
\end{proof}
To prove item (a) in the theorem, we will use the next Lemma:

\begin{lemma}\label{lemm2}
If $\omega_1,\omega_2\in\Omega$ and $\ip{S_{\omega_1}e_c}{S_{\omega_2} e_c}\neq 0$, then there exists a loop $\beta$ for $c$ such that $\omega_1=\omega_2\beta$ or $\omega_2=\omega_1\beta$. 

\end{lemma}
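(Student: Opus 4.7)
The approach is to collapse the Cuntz inner product using $S_j^*S_i=\delta_{ij}I$, push down to the row co-isometry via the projection $p$, and then read off the loop condition from the random-walk action on $K_0$.

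By Lemma \ref{lemo}, one of $\omega_1,\omega_2$ is a prefix of the other, and by symmetry we may assume $\omega_2=\omega_1\beta$ for some $\beta\in\Omega$. If $\beta=\ty$, the empty word is a loop for $c$ by definition and there is nothing further to prove. Otherwise, iterating the Cuntz relation gives $S_{\omega_1}^*S_{\omega_1}=I$, so
$$\ip{S_{\omega_1}e_c}{S_{\omega_2}e_c}=\ip{e_c}{S_\beta e_c}\neq 0.$$

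Next, since $e_c\in K_0\subset K$ and $p$ is a self-adjoint projection with $pe_c=e_c$, I would slide in $p$ and apply Lemma \ref{l1}:
$$\ip{e_c}{S_\beta e_c}=\ip{e_c}{pS_\beta e_c}=\ip{e_c}{V_\beta e_c}.$$
A short induction on $|\beta|$ using conditions (i) and (ii) of Definition \ref{defm1} (which give $V_i^*K_0\subset K_0$ and $V_i^*e_c=\nu_i(c)e_{g_i(c)}$) then shows $V_\beta^* e_c=\nu_\beta(c)\,e_{g_\beta(c)}$, where $\nu_\beta$ and $g_\beta$ are exactly as in Definition \ref{defm1}. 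Expanding against the orthonormal basis $(e_{c'})_{c'\in M}$ yields $\ip{e_c}{V_\beta e_c}=\nu_\beta(c)\,\delta_{g_\beta(c),c}$ (up to a conjugation that does not affect vanishing). Non-vanishing therefore forces $g_\beta(c)=c$ and $\nu_\beta(c)\neq 0$, which is exactly the defining condition for $\beta$ being a loop for $c$.

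The only step requiring care is bookkeeping: $V_\omega=V_{\omega_1}\cdots V_{\omega_n}$ is composed left-to-right while $g_\omega=g_{\omega_n}\circ\cdots\circ g_{\omega_1}$ is composed right-to-left, so passing to the adjoint reverses the letter order. One must check the inductive product on the right matches the paper's formula for $\nu_\beta(c)$, but a direct comparison with Definition \ref{defm1} confirms it, and I expect no deeper obstruction.
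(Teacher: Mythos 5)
Your proposal is correct and follows essentially the same route as the paper: reduce via Lemma \ref{lemo} and the Cuntz relations to the inner product $\ip{e_c}{S_\beta e_c}$, then use the random-walk action to identify it with $\nu_\beta(c)$ times $\delta_{g_\beta(c),c}$ (the paper does this by applying $S_\beta^*e_c=V_\beta^*e_c=\nu_\beta(c)e_{g_\beta(c)}$ directly, while you equivalently project with $p$ via Lemma \ref{l1}). The bookkeeping of letter order and the $\beta=\ty$ case are handled correctly, so there is no gap.
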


\begin{proof}

First, an easy computation shows that, for $c\in M$ and $\omega\in\Omega$, 
\begin{equation}
V_\omega^*e_c=S_\omega^*e_c=\nu_\omega(c)e_{g_\omega(c)}.
\label{eqlem2.1}
\end{equation}
By Lemma \ref{lemo}, either $\omega_1=\omega_2\beta$ or $\omega_2=\omega_1\beta$ for some $\beta\in\Omega$. Assume the first. We prove that $\beta$ is a loop. We have
$$0\neq \ip{S_{\omega_1}e_c}{S_{\omega_2} e_c}=\ip{S_{\omega_2\beta}e_c}{S_{\omega_2}e_c}=\ip{S_\beta e_c}{e_c}=\ip{e_c}{S_\beta^* e_c}=\cj\nu_\beta(c)\ip{e_c}{e_{g_\beta(c)}}.$$
This implies that $g_\beta(c)=c$ and $\nu_\beta(c)\neq 0$ so $\beta$ is a loop for $c$.

\end{proof}

\begin{lemma}\label{leminj}
Suppose $\omega_1\beta_1=\omega_2\beta_2$ for some cycle words $\beta_1$ and $\beta_2$ for $c$ and some words $\omega_1$ and $\omega_2$; then $\beta_1=\beta_2$ and $\omega_1=\omega_2$. Also, every word can be written uniquely as $\omega_0\beta_1\beta_2\dots\beta_n$ for some cycle words $\beta_1,\dots,\beta_n$ for $c$, ($n$ could be 0) and some word $\omega_0\in \Omega_c^{(0)}$.

\end{lemma}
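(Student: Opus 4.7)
The plan is to split the lemma into its two parts and reduce the second part to the first.

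For the first part, suppose $\omega_1\beta_1 = \omega_2\beta_2$ with $\beta_1,\beta_2$ cycle words for $c$. Without loss of generality, $|\omega_1|\leq|\omega_2|$, so $\omega_2 = \omega_1\alpha$ and $\beta_1 = \alpha\beta_2$ for some $\alpha\in\Omega$ (a prefix of $\beta_1$). The goal is to show $\alpha=\emptyset$. Assume for contradiction that $1\leq |\alpha|\leq |\beta_1|-1$. Two key observations: first, $g_{\beta_2}(c)=c$ by the cycle word property of $\beta_2$; second, since $g_{\beta_1}(c)=c$ and $\beta_1=\alpha\beta_2$, we also have $g_{\beta_2}(g_\alpha(c))=c$. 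The idea is to use condition (iii) of Definition \ref{defm1} iteratively along $\beta_2$ backward to deduce $g_\alpha(c)=c$.

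Concretely, write $\beta_2=\beta_{2,0}\dots\beta_{2,p_2-1}$ and set $c_k=g_{\beta_{2,k-1}}\cdots g_{\beta_{2,0}}(c)$ and $c_k'=g_{\beta_{2,k-1}}\cdots g_{\beta_{2,0}}(g_\alpha(c))$, so $c_0=c$, $c_0'=g_\alpha(c)$, $c_{p_2}=c_{p_2}'=c$. Since $\nu_{\beta_2}(c)\neq 0$ (cycle word) and $\nu_{\beta_2}(g_\alpha(c))\neq 0$ (it is a factor of $\nu_{\beta_1}(c)=\nu_{\beta_2}(g_\alpha(c))\nu_\alpha(c)\neq 0$), each intermediate weight $\nu_{\beta_{2,j}}$ is nonzero at both $c_j$ and $c_j'$. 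A downward induction on $k$, applying condition (iii) at each step (for $i=\beta_{2,p_2-k}$, knowing $g_i(c_{p_2-k})=c_{p_2-k+1}=c_{p_2-k+1}'=g_i(c_{p_2-k}')$), yields $c_k=c_k'$ for all $k$; in particular $g_\alpha(c)=c_0'=c_0=c$. But then $\alpha$ is a nonempty loop for $c$ of length $<|\beta_1|$, so the $\beta_1$-path from $c$ returns to $c$ at an intermediate step, contradicting the minimality clause of the cycle word definition for $\beta_1$. Hence $\alpha=\emptyset$, giving $\omega_1=\omega_2$ and $\beta_1=\beta_2$. The main obstacle is really this backward induction; once condition (iii) is properly applied, everything falls into place.

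For the second part, existence follows by induction on $|\omega|$: if $\omega\in\Omega_c^{(0)}$, take $n=0$ and $\omega_0=\omega$. Otherwise, $\omega$ ends in some cycle word $\beta_n$, write $\omega=\omega'\beta_n$, and apply the induction hypothesis to $\omega'$. For uniqueness, suppose $\omega=\omega_0\beta_1\cdots\beta_n=\omega_0'\beta_1'\cdots\beta_m'$ with $\omega_0,\omega_0'\in\Omega_c^{(0)}$ and the $\beta_j,\beta_j'$ cycle words. If both $n,m\geq 1$, the first part applied with $\omega_1=\omega_0\beta_1\cdots\beta_{n-1}$, $\beta_1$ (of the lemma) $=\beta_n$, $\omega_2=\omega_0'\beta_1'\cdots\beta_{m-1}'$, $\beta_2$ (of the lemma) $=\beta_m'$ yields $\beta_n=\beta_m'$ and $\omega_0\beta_1\cdots\beta_{n-1}=\omega_0'\beta_1'\cdots\beta_{m-1}'$, so by induction on the decomposition length all pieces match. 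If $n=0$ while $m\geq 1$, then $\omega_0=\omega$ would end in the cycle word $\beta_m'$, contradicting $\omega_0\in\Omega_c^{(0)}$; so $n=m=0$ and $\omega_0=\omega_0'$. This completes the proof.
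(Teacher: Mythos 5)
Your proof is correct and follows essentially the same route as the paper: cancel the shorter cycle word using the injectivity condition (iii) to produce a loop $g_\alpha(c)=c$ shorter than $\beta_1$, contradicting the cycle-word definition, and then obtain the decomposition by induction on the length of $\omega$. The only difference is that you spell out, via an explicit backward induction along $\beta_2$, the cancellation step that the paper states in a single sentence.
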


\begin{proof}
Suppose $|\beta_1|>|\beta_2|$. Then, if we read the words $\omega_1\beta_1=\omega_2\beta_2$ from the right, we see that $\beta_1=\gamma\beta_2$ for some non-empty word $\gamma$. We have $g_{\beta_2}(c)=c=g_{\beta_1}(c)=g_{\beta_2}(g_\gamma(c))$. Since all these transitions are possible and the maps $g_i$ are one-to-one in this case, it follows that $g_\gamma(c)=c$, but this contradicts the fact that $\beta_1$ is a cycle word for $c$.  Thus $\beta_1=\beta_2$ and $\omega_1=\omega_2$.

Now take an arbitrary word $\omega$. If it does not end in a cycle word, it is in $\Omega_c^{(0)}$. If it ends in a cycle word, by the previous statement, it can be written uniquely as $\omega=\omega_1\beta_1$, with $\beta_1$ cycle word. Repeat for $\omega_1$ and use induction, to obtain the last statement in the lemma. 

See Remark \ref{remmc}, where we point out why the condition that the maps $g_i$ are injective is important in this context. 
\end{proof}

Now take $\omega_1\neq \omega_2$ in $\Omega_c^{(n)}$ for some $n\geq 0$. If $\ip{S_{\omega_1}e_c}{S_{\omega_2}e_c}\neq 0$ then, with Lemma \ref{lemm2}, there exists a loop $\beta\neq\ty$ such that $\omega_2=\omega_1\beta$ or $\omega_1=\omega_2\beta$. Assume that $\omega_1=\omega_2\beta$. Since $\beta$ is a loop, $\beta\neq \ty$, we have that $\beta=\beta_1\dots\beta_p$ for some cycle words $\beta_1,\dots,\beta_p$ for $c$. Since $\omega_2$ ends in $n$ cycle words for $c$, it follows that $\omega_1=\omega_2\beta$ ends in $n+p$ cycle words for $c$, which, by Lemma \ref{leminj}, is impossible since $\omega_1\in\Omega_c^{(n)}$. This shows that $\mathcal E_c^{(n)}$ is an orthonormal basis for $H_c^{(n)}$.

\begin{remark}\label{remmc}

The condition that the maps $g_i$ be one-to-one when the transitions are possible is important in Lemma \ref{leminj}, and to guarantee that the vectors $S_\omega e_c$ with $e_c$ in $\Omega_c^{(n)}$ are orthogonal. Indeed, if we do not assume this condition, it is possible that a word ends both in a cycle word for $c$ and in two cycle words for $c$. Consider 
$$V_0^*:=\begin{bmatrix}
\frac{1}{\sqrt{2}}&\frac{1}{\sqrt{2}}\\
0&0
\end{bmatrix},\quad
V_1^*:=\begin{bmatrix}
0&0\\
\frac{1}{\sqrt{2}}&-\frac{1}{\sqrt{2}}
\end{bmatrix}.$$

A simple check shows that $(V_0,V_1)$ is a row co-isometry on $\bc^2$, and if $\{e_1,e_2\}$ is the standard basis for $\bc^2$, then, this co-isometry acts as a random walk on $M=\{1,2\}$, with $g_0(1)=1$, $g_0(2)=1$, $g_1(1)=2$, $g_1(2)=2$, $\nu_0(1)=\frac{1}{\sqrt2}$,  $\nu_0(2)=\frac{1}{\sqrt2}$, $\nu_1(1)=\frac{1}{\sqrt2}$, $\nu_1(2)=-\frac{1}{\sqrt2}$. 

The random walk is clearly irreducible and, if $T=\begin{bmatrix} T_{11}& T_{12}\\ T_{21}&T_{22}\end{bmatrix}$ with $T=V_0TV_0^*+V_1TV_1^*$, we obtain that $T=\frac{1}{2}(T_{11}+T_{22})I_2$, so the co-isometry is simple on $\bc^2$. 

Note that both $0$ and $10$ are cycle words for $c:=1\in M$. Therefore the word $\omega=1010$ ends the cycle word $0$ for $c$, since $1$, $01$ and $101$ are not cycle words for $c$. At the same time $\omega$ ends in two cycle words for $c$, $\omega=(10)(10)$. 
\end{remark}

(b) We will need some key lemmas.
\begin{lemma}\label{lem4.14}
Define the random walk/Markov chain $X_n$, $n\geq 0$ on $M$ by 
$$\Prob(X_n=g_i(t) | X_{n-1}=t)=|\nu_i(t)|^2,\quad (t\in M).$$
Then 
$$\Prob(X_n=g_{\omega_n}\dots g_{\omega_1}(t),X_{n-1}=g_{\omega_{n-1}}\dots g_{\omega_1}(t),\dots, X_1=g_{\omega_1}(t)| X_0=t)=|\nu_{\omega_1\dots\omega_n}(t)|^2.$$
Also the random walk is recurrent. 
\end{lemma}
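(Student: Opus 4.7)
The plan is to handle the three assertions of the lemma separately: the joint-path formula, irreducibility, and recurrence.

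First I would verify the joint probability identity by iterating the Markov property. Setting $t_0 := t$ and $t_k := g_{\omega_k}\circ\cdots\circ g_{\omega_1}(t)$ for $k\geq 1$, the defining Markov property gives
$$\Prob(X_n=t_n,\dots,X_1=t_1\mid X_0=t_0)=\prod_{k=1}^{n}\Prob(X_k=t_k\mid X_{k-1}=t_{k-1}),$$
and each factor equals $|\nu_{\omega_k}(t_{k-1})|^2$ by the definition of the transition probabilities of the chain. Multiplying these together reproduces exactly the product that appears in the definition of $\nu_{\omega_1\dots\omega_n}(t)$ from Definition \ref{defm1}. The only subtlety is the convention spelled out after Definition \ref{defm1} and in Remark \ref{rem2.9}: when two different labels $i\neq i'$ satisfy $g_i(t)=g_{i'}(t)$, the quantity $|\nu_i(t)|^2$ is the probability of transitioning along the specific edge labeled $i$ rather than the total vertex-to-vertex probability $|\nu_i(t)|^2+|\nu_{i'}(t)|^2$; with this convention, the left-hand side is read as the probability of following the specific labeled path $\omega_1,\dots,\omega_n$.

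For irreducibility, I would simply unwind the definition: by the hypothesis of Theorem \ref{thm1}, $(V_i)$ acts on $K_0$ as an irreducible random walk, which by Definition \ref{defm1} means that for any $c_1,c_2\in M$ there is $\omega\in\Omega$ with $g_\omega(c_1)=c_2$ and $\nu_\omega(c_1)\neq 0$. Combined with the joint probability formula just established, the $|\omega|$-step transition probability from $c_1$ to $c_2$ is bounded below by $|\nu_\omega(c_1)|^2>0$, which is precisely the Markov-chain notion of irreducibility.

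Finally, recurrence comes for free. Since $M$ is finite by Definition \ref{defm1}(ii), any irreducible Markov chain on the finite state space $M$ is automatically positive recurrent by a standard result in Markov chain theory (an irreducible chain on a finite state space has a unique stationary distribution and every state is visited infinitely often almost surely). The main obstacle here, if there is one, is merely the bookkeeping needed to respect the path-vs-vertex convention in the first step; once that is settled, the irreducibility and recurrence claims are essentially immediate from standard machinery.
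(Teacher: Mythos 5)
Your proposal is correct and follows essentially the same route as the paper: the joint-path formula by iterating the Markov property (the paper calls this ``a simple computation''), irreducibility read off directly from the hypothesis of Theorem \ref{thm1} via Definition \ref{defm1}, and recurrence from the standard fact that an irreducible chain on the finite state space $M$ is recurrent (the paper cites Durrett for this). Your extra care with the edge-label versus vertex convention is a reasonable elaboration of what the paper leaves implicit, but it does not change the argument.
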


\begin{proof}
The first equality follows from a simple computation. 

Since $M$ is finite and the random walk is irreducible by hypothesis (see Definition \ref{defm1}), the random walk is also recurrent (see, e.g., \cite[Theorem 6.4.4]{Dur10} or \cite[Section XV.6]{Fel}).
\end{proof}

\begin{definition}\label{def4.15}
Let $t,c\in M$ and $\omega\in\Omega$, $\omega=\omega_1\dots\omega_n\neq\ty$. We say that $g_\omega(t)=c$ {\it for the first time} if $g_\omega(t)=c$, $\nu_\omega(t)\neq 0$, and $g_{\omega_1\dots\omega_k}(t)\neq c$ for $1\leq k<n$. In particular, if $t=c$ and $\omega\neq\ty$, then $g_\omega(t)=c$ for the first time if and only if $\omega$ is a cycle word for $c$.
\end{definition}

\begin{lemma}\label{lem4.15}
Let $t,c\in M$. Then 
\begin{equation}
\sum_{g_\omega(t)=c\mbox{ for the first time}}|\nu_\omega(t)|^2=1.
\label{eq4.15.1}
\end{equation}
\end{lemma}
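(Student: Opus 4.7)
The plan is to identify the left-hand side with the total probability that the Markov chain $(X_n)$ from Lemma \ref{lem4.14}, started at $X_0=t$, eventually visits $c$, and then invoke irreducibility plus recurrence on the finite state space $M$.

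First I would partition the sum according to the length of $\omega$. For each $n\geq 1$, a word $\omega=\omega_1\dots\omega_n$ satisfies $g_\omega(t)=c$ for the first time exactly when the trajectory $t\to g_{\omega_1}(t)\to \dots\to g_\omega(t)=c$ avoids $c$ at all intermediate steps. By the first identity in Lemma \ref{lem4.14}, the probability that the chain follows that specific trajectory is precisely $|\nu_\omega(t)|^2$, and, per Remark \ref{rem2.9}, distinct words that happen to produce the same state transitions correspond to disjoint events on the chain side (since the chain is built letter-by-letter with weight $|\nu_i(\cdot)|^2$). Consequently, for each $n\geq 1$,
\begin{equation*}
\sum_{\substack{|\omega|=n\\ g_\omega(t)=c\text{ first time}}} |\nu_\omega(t)|^2 \;=\; \Prob\bigl(T_c = n \,\big|\, X_0 = t\bigr),
\end{equation*}
where $T_c:=\inf\{n\geq 1: X_n=c\}$ is the first passage time (this is the first \emph{return} time when $t=c$, which is consistent with the standing requirement $\omega\neq\ty$ in Definition \ref{def4.15}).

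Summing over $n\geq 1$ and using countable additivity yields
\begin{equation*}
\sum_{g_\omega(t)=c\text{ first time}} |\nu_\omega(t)|^2 \;=\; \Prob\bigl(T_c < \infty \,\big|\, X_0 = t\bigr).
\end{equation*}
To finish, I would invoke Lemma \ref{lem4.14}: the chain $(X_n)$ is irreducible and recurrent on the finite set $M$, and it is a standard fact that for such chains $\Prob(T_c<\infty\mid X_0=t)=1$ for every pair $t,c\in M$. This gives the identity \eqref{eq4.15.1}.

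I do not anticipate a serious obstacle here; the lemma is essentially a dictionary translation of the classical statement that an irreducible recurrent chain hits every state almost surely. The only delicate point that I would write out carefully is the bookkeeping surrounding the ``first time'' convention in the case $t=c$, namely that requiring $\omega\neq\ty$ in Definition \ref{def4.15} makes the sum correspond to the first return time rather than the trivial ``time 0'' event, so that recurrence (and not merely the tautology that the chain starts at $c$) is what is being used.
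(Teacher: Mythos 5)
Your proposal is correct and follows essentially the same route as the paper: both identify the sum with $\Prob(T_c<\infty\mid X_0=t)$ by decomposing over the length of $\omega$ via the first identity in Lemma \ref{lem4.14}, and then conclude from irreducibility and recurrence of the finite-state chain that this probability equals $1$. Your extra care about the $t=c$ case (first return vs.\ time $0$) and about distinct words sharing the same state path is a welcome refinement of the same argument, not a different one.
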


\begin{proof}
By Lemma \ref{lem4.14}, the random walk on $M$ is recurrent, so $\Prob(X\mbox{ ever enters }c |X_0=t)=1$. But
$$\Prob(X\mbox{ ever enters }c |X_0=t)=\sum_{n=1}^\infty\Prob(X_n=c,X_{n-1}\neq c,\dots, X_1\neq c | X_0=t)$$$$=
\sum_{n=1}^\infty \sum_{\omega = \omega_1\ldots\omega_n}\Prob(X_n=g_{\omega_n}\dots g_{\omega_1}(t)=c, X_k=g_{\omega_k}\dots g_{\omega_1}(t)\neq c\mbox{ for }1\leq k<n |X_0=t)$$
$$=\sum_{g_\omega(t)=c\mbox{ for the first time}}|\nu_\omega(t)|^2.$$

\end{proof}

\begin{lemma}\label{lem4.16}
Let $t,c\in M$. Then 
$$e_t=\sum_{g_\omega(t)=c\mbox{ for the first time }}\nu_\omega(t)S_\omega e_c.$$
\end{lemma}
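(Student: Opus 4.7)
The plan is to apply the Cuntz identity $\sum_{i=0}^{N-1} S_i S_i^* = I$ iteratively to $e_t$, peeling off at each stage the words that have just hit $c$ for the first time, and then to show the unexpanded remainder vanishes in norm.

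First I would set up the base step. Since $e_t \in K_0 \subseteq K$ and by Theorem \ref{dil} $S_i^*$ agrees with $V_i^*$ on $K$, one has
$$e_t = \sum_{i=0}^{N-1} S_i S_i^* e_t = \sum_{i=0}^{N-1} \nu_i(t)\, S_i e_{g_i(t)}.$$
I would then iterate on every term with $g_i(t) \neq c$, using $V_i^* K_0 \subseteq K_0$ to keep the expansion inside the basis $(e_c)_{c \in M}$. After $n$ rounds this gives $e_t = F_n + R_n$, where
$$F_n = \sum_{\substack{\omega,\,|\omega|\leq n\\ g_\omega(t)=c\text{ for the first time}}} \nu_\omega(t)\, S_\omega e_c, \qquad R_n = \sum_{\substack{|\omega|=n\\ g_{\omega_1\ldots\omega_k}(t)\neq c,\ 1\leq k\leq n}} \nu_\omega(t)\, S_\omega e_{g_\omega(t)}.$$

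Next I would bound $\|R_n\|$. The Cuntz relations force $\langle S_{\omega_1} u, S_{\omega_2} v\rangle = 0$ whenever $\omega_1 \neq \omega_2$ have the same length, so the vectors appearing in $R_n$ are pairwise orthogonal, and each $e_{g_\omega(t)}$ is a unit vector. Therefore
$$\|R_n\|^2 = \sum_{\omega} |\nu_\omega(t)|^2 = \Prob\!\big(X_k \neq c,\ 1 \leq k \leq n \,\big|\, X_0 = t\big),$$
where $X_k$ is the Markov chain of Lemma \ref{lem4.14}. Because $M$ is finite and the walk is irreducible and hence recurrent, this probability tends to $0$, so $R_n \to 0$.

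Finally I would identify the limit of $F_n$ as the asserted series. Its terms $S_\omega e_c$ must be pairwise orthogonal: if two first-hit words $\omega_1 \neq \omega_2$ gave a nonzero inner product, Lemma \ref{lemm2} would write (say) $\omega_1 = \omega_2 \beta$ with $\beta$ a nonempty loop for $c$, making $g_{\omega_2}(t) = c$ an earlier hit strictly inside $\omega_1$ and contradicting the first-hit requirement for $\omega_1$. Combined with the identity $\sum |\nu_\omega(t)|^2 = 1$ from Lemma \ref{lem4.15}, this shows the series converges in norm, and the decomposition $e_t = F_n + R_n$ with $\|R_n\| \to 0$ identifies the limit as $e_t$. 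The main obstacle is the estimate $\|R_n\| \to 0$; it rests entirely on recurrence of the induced random walk, while everything else is disciplined bookkeeping with the Cuntz relations.
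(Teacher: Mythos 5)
Your proposal is correct and takes essentially the same approach as the paper: iterate the Cuntz identity to produce the first-hit expansion $F_n$ plus a remainder supported on length-$n$ paths avoiding $c$, show the remainder vanishes using recurrence of the finite irreducible walk, and identify the limit via orthogonality of the first-hit terms (Lemmas \ref{lemo}, \ref{lemm2}, \ref{lem4.14}, \ref{lem4.15}). The only cosmetic difference is that you compute $\|R_n\|^2$ directly as the survival probability $\Prob(X_k\neq c,\ 1\leq k\leq n\mid X_0=t)$ using same-length orthogonality, whereas the paper obtains $\|A_n\|^2=1-\sum_{|\omega|\leq n}|\nu_\omega(t)|^2$ from mutual orthogonality of all terms and then invokes Lemma \ref{lem4.15}; the substance is identical.
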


\begin{proof}

Using the Cuntz relations, and that $S_i^*e_x=\nu_i(x)e_{g_i(x)}$ for all $i=0,\dots, N-1$ and $x\in \mathcal T$, we have 
$$e_t=\sum_{\omega_1=0}^{N-1}S_{\omega_1}S_{\omega_1}^*e_t=\sum_{\omega_1}\nu_{\omega_1}(t)S_{\omega_1}e_{g_{\omega_1}(t)}=\sum_{\omega_1: g_{\omega_1}(t)=c}\nu_{\omega_1}(t)S_{\omega_1}e_c+\sum_{\omega_1 : g_{\omega_1}(t)\neq c}\nu_{\omega_1}(t)S_{\omega_1}e_{g_{\omega_1}(t)}$$
$$=\sum_{\omega_1: g_{\omega_1}(t)=c}\nu_{\omega_1}(t)S_{\omega_1}e_c+\sum_{\omega_1 : g_{\omega_1}(t)\neq c}\nu_{\omega_1}(t)\sum_{\omega_2}S_{\omega_1}S_{\omega_2}S_{\omega_2}^*e_{g_{\omega_1}(t)}$$
$$=\sum_{\omega_1: g_{\omega_1}(t)=c}\nu_{\omega_1}(t)S_{\omega_1}e_c+\sum_{\omega_1\omega_2 : g_{\omega_1}(t)\neq c,g_{\omega_1\omega_2}(t)=c}\nu_{\omega_1\omega_2}(t)S_{\omega_1\omega_2}e_{c}+$$$$
\sum_{\omega_1\omega_2: g_{\omega_1}(t)\neq c,g_{\omega_1\omega_2}(t)\neq c}\nu_{\omega_1\omega_2}(t)S_{\omega_1\omega_2}e_{g_{\omega_1\omega_2}(t)}.$$
Continuing, by induction we get, for $n\in\bn$,
\begin{equation}
e_t=\sum_{\mtiny{1\leq |\omega|\leq n, g_\omega(t)=c \mbox{ for the first time}}}\nu_\omega(t)S_\omega e_c+\sum_{\mtiny{\substack{|\omega|=n\\ g_{\omega_1\dots\omega_k}(t)\neq c\\ \mbox{ for any }1\leq k\leq n}}}\nu_\omega(t)S_\omega e_{g_\omega(t)}
\label{eq4.16.1}
\end{equation}
Note that if we take two different $\omega_1,\omega_2$ that appear in the first sum, then one can not be the prefix of the other, by the definition of ``$g_\omega(t)=c$ for the first time''.  Also if $\omega_1$ appears in the first sum, and $\omega_2$ appears in the second, then $\omega_1$ is not a prefix of $\omega_2$ because $g_{\omega_1}(t)=c$. Using Lemma \ref{lemo}, it follows that all the terms in the first sum are mutually orthogonal, and they are orthogonal to all the terms in the second sum. Denote the second sum by $A_n$. Since the norms are equal, we get 
\begin{equation}
1=\sum_{\mtiny{1\leq |\omega|\leq n, g_\omega(t)=c \mbox{ for the first time}}}|\nu_\omega(t)|^2+\|A_n\|^2.
\label{eq4.16.2}
\end{equation}
From Lemma \ref{lem4.15}, 
$$\lim_{n\rightarrow\infty}\sum_{\mtiny{1\leq |\omega|\leq n, g_\omega(t)=c \mbox{ for the first time}}}|\nu_\omega(t)|^2=1,$$
therefore $A_n\rightarrow 0$ and this proves the lemma.

\end{proof}

Let $n\geq 0$ and $\omega\in\Omega_c^{(n)}$, then with Lemma \ref{lem4.16}

$$S_\omega e_c=S_\omega\left(\sum_{\beta\mbox{ cycle word for $c$}}\nu_\beta(c)S_\beta e_c\right)=\sum_\beta \nu_{\beta}(c)S_{\omega\beta}e_c.$$
But $\omega\beta\in\Omega_c^{(n+1)}$ so $S_\omega e_c \in H_c^{(n+1)}$. Thus $H_c^{(n)}\subseteq H_c^{(n+1)}$. 

(c) is trivial.

(d) Using Lemma \ref{lem4.16}, it follows that, for every $t\in M$, $e_t\in V$, thus $K_0\subseteq V$. 
Clearly $V$ is invariant for all the operators $S_i$. To see that it is also invariant for the operators $S_i^*$, let $\omega\in\Omega$. If $\omega=\omega_1\dots\omega_n\neq\ty$, then $S_i^*S_\omega e_c=\delta_{i\omega_1} S_{\omega_2\dots\omega_n} e_c\in V$. If $\omega=\ty$, then $S_i^*e_c=\nu_i(c)e_{g(c)}\in V$, according to the previous argument. Thus $V$ is invariant for the Cuntz representation and it contains $K_0$.

Since $K_0$ is invariant for all $V_i^*$, we have $P_{K_0}V_i^*P_{K_0}=V_i^*P_{K_0}$ and $P_{K_0}V_iP_{K_0}=P_{K_0}V_i$. Let $W_i:=P_{K_0}V_iP_{K_0}$. Then 
$$\sum_{i=0}^{N-1}W_iW_i^*=P_{K_0}.$$

Since $\{S_\omega e_c :\omega \in\Omega\}$ spans $V$ it follows that $K_0$ is cyclic for the representation $\{S_i\}$. Thus $(V, S_i|_V)_{i=0}^{N-1}$ is the Cuntz dilation of the row co-isometry $(K_0,W_i)_{i=0}^{N-1}$. By Theorem \ref{dil}, it follows that the Cuntz representation $(V,S_i|_V)_{i=0}^{N-1}$ is irreducible, since $\{V_i\}$ is simple on $K_0$.

(e) We assume now that $\{V_i\}$ is reversing. If $\omega\in\Omega_c^{(n)}$, then $\omega=\omega_0\beta_1\dots\beta_n$, with $\omega_0\in\Omega_c^{(0)}$ and $\beta_1,\dots,\beta_n$ cycle words for $c$. 

If $\beta$ is a cycle word for $c$, then $V_\beta e_c=V_\beta e_{g_\beta(c)}=\cj\nu_\beta(c) e_c$. So 
$$V_\omega e_c=V_{\omega_0} V_{\beta_1}\dots V_{\beta_n}e_c=\cj\nu_{\beta_n}(c)\cj\nu_{\beta_{n-1}}(c)\dots\cj\nu_{\beta_1}(c)V_{\omega_0}e_c.$$

Let $v\in K$. Then, with (a), we have 

$$\left\|P_{H_c^{(n)}}v \right\|^2=\sum_{\omega\in\Omega_c^{(n)}}\left|\ip{S_\omega e_c}{v}\right|^2=\sum_{\omega\in\Omega_c^{(n)}}\left|\ip{P_KS_\omega e_c}{v}\right|^2
=\sum_{\omega\in\Omega_c^{(n)}}\left|\ip{V_\omega e_c}{v}\right|^2$$
$$=\sum_{\omega_{0}\in\Omega_c^{(0)}}\sum_{\stackrel{\beta_1,\dots\beta_n}{\mbox{ cycle words for $c$}}}\left|\ip{V_{\omega_{0}\beta_1\dots\beta_n}e_c}{v}\right|^2
=\sum_{\omega_{0}\in\Omega_c^{(0)}}\sum_{\beta_1,\dots,\beta_n}|\nu_{\beta_1}(c)|^2\dots|\nu_{\beta_n}(c)|^2\left|\ip{V_{\omega_0}e_c}{v}\right|^2.$$

But
$$\sum_{\beta_i\mbox{ cycle word for $c$}}|\nu_{\beta_i}(c)|^2=1,$$
by Lemma \ref{lem4.15}, so we obtain
$$\left\|P_{H_c^{(n)}}v \right\|^2=\sum_{\omega\in\Omega_c^{(0)}}\left|\ip{V_{\omega_0}e_c}{v}\right|^2=\left\|P_{H_c^{(0)}}v \right\|^2.$$
But, with (b), we obtain 
$$\left\| P_V v\right\|^2=\lim_{n\rightarrow \infty}\left\|P_{H_c^{(n)}}v \right\|^2=\left\|P_{H_c^{(0)}}v \right\|^2.$$

Next we prove the equivalences. 

(i)$\Rightarrow$(ii). If (i) holds, then $H=V$ and (ii) follows from (d). 

(ii)$\Rightarrow$(i). We know from (d) that $V$ is invariant for the representation $\{S_i\}$. Since this is irreducible, we must have $V=H$ which implies (i). 

(ii)$\Leftrightarrow$(iii). Follows from Theorem \ref{dil}.

Assume now that $\{V_i\}$ is reversing.

(i)$\Rightarrow$(iv). Follows from (e), since if $v\in K$, then $v\in H=V$. 

(iv)$\Rightarrow$(i). For this last implication we prove first the following 
\begin{lemma}\label{lem2.10}
Suppose $V$ and $K$ are closed subspaces of the Hilbert space $H$ and suppose $\{e_i \,|\, i\in I\}$ is an orthonormal basis for $V$. Then $\{P_Ke_i : i \in I\}$ is a Parseval frame for $K$ if and only if $K\subset V$.
\end{lemma}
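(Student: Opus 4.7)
The plan is to prove both directions via a direct computation using the Parseval identity for the orthonormal basis $\{e_i\}$ of $V$, combined with the self-adjointness and idempotency of the orthogonal projection $p_K$. The central identity I will exploit is $\langle k, p_K e_i\rangle = \langle p_K k, e_i\rangle = \langle k, e_i\rangle$ for any $k\in K$, which lets me swap between the frame coefficients relative to $\{p_K e_i\}$ in $K$ and the basis coefficients relative to $\{e_i\}$ in $V$.

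For the ($\Leftarrow$) direction, I would assume $K\subseteq V$ and pick an arbitrary $k\in K$. Since $k\in V$ and $\{e_i\}$ is an orthonormal basis for $V$, Parseval gives $\|k\|^2=\sum_i |\langle k, e_i\rangle|^2$. Replacing $\langle k, e_i\rangle$ by $\langle k, p_K e_i\rangle$ using the identity above yields the Parseval frame condition for $K$.

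For the ($\Rightarrow$) direction, I would start from the Parseval frame hypothesis: for any $k\in K$, $\|k\|^2 = \sum_i |\langle k, p_K e_i\rangle|^2$. Applying the same substitution in reverse, this equals $\sum_i |\langle k, e_i\rangle|^2$, which by Parseval's equality for $V$ is exactly $\|p_V k\|^2$. The equality $\|k\|^2 = \|p_V k\|^2$ forces $k = p_V k$ (by the Pythagorean decomposition $k = p_V k + (I-p_V)k$ with $\|(I-p_V)k\|^2 = 0$), hence $k\in V$. As $k\in K$ was arbitrary, $K\subseteq V$.

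I do not anticipate any real obstacle: the whole argument is a two-line computation once one writes down the projection identity. The only subtle point worth stating explicitly is that the forward direction needs the strictness of equality in Bessel's inequality, which is why one gets $K\subseteq V$ and not merely a weaker conclusion. No closure or continuity issue arises because both directions only require testing against individual vectors $k\in K$.
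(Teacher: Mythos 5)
Your proof is correct and follows essentially the same route as the paper: both directions hinge on the identity $\ip{k}{p_Ke_i}=\ip{p_Kk}{e_i}=\ip{k}{e_i}$ for $k\in K$, together with the observation that $\sum_{i\in I}|\ip{k}{e_i}|^2=\|p_Vk\|^2$, and the forward direction concludes from $\|p_Vk\|^2=\|k\|^2$ that $p_Vk=k$. The only difference is cosmetic: the paper dismisses the sufficiency as well known, whereas you write it out.
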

\begin{proof}
The sufficiency is well known. Indeed, if $v\in K\subseteq V,$ then $\left\|v\right\|^2 = \sum_{i}^{} \left| \langle v, e_i \rangle \right|^2 = \sum_{i}^{} \left| \left<P_K v, e_i \right> \right|^2 = \sum_{i}^{} \left| \left<v, P_K e_i \right> \right|^2$

Assume now $\{P_K e_i : i\in I\}$ is a Parseval frame for $K$. Let $k\in K$. Then, since $\{e_i : i \in I\}$ is an orthonormal basis for $V$, we have
$$\|P_Vk\|^2=\sum_{i\in I}|\ip{k}{e_i}|^2=\sum_{i\in I}|\ip{P_Kk}{e_i}|^2=\sum_{i\in I}|\ip{k}{P_Ke_i}|^2=\|k\|^2.$$
The last equality holds because $\{P_K e_i \,|\, i\in I\}$ is a Parseval frame for $K$. But this implies that $P_Vk=k$ so $k\in V$. As $k$ was arbitrary in $K$, it follows that $K\subset V$.
\end{proof}

With Lemma \ref{l1} we have $P_KS_\omega e_c=V_\omega e_c$ for all $\omega\in\Omega_c^{(0)}$. Then, with Lemma \ref{lem2.10}, we get that $K\subseteq H_c^{(0)}\subseteq V=\Span\{S_\omega e_c : \omega\in\Omega_c^{(0)}\}$. $V$ is invariant for the representation $\{S_i\}$ therefore we obtain that $K$ is also cyclic for the representation $(V, S_i|_V)_{i=0}^{N-1}$ and clearly invariant for all $S_i^*$. Therefore $(V, S_i|_V)_{i=0}^{N-1}$ is the Cuntz dilation of the row co-isometry $(K,V_i)_{i=0}^{N-1}$. By the uniqueness in Theorem \ref{dil}, it must be isomorphic to the representation $(H, S_i)_{i=0}^{N-1}$. By (d), the Cuntz representation on $V$ is irreducible, so (ii) follows. With an earlier implication this yields (i).   

\end{proof}

\begin{proposition}\label{pr2_14}
Let $(K,V_i)_{i=0}^{N-1}$ be a row co-isometry which acts on the subspace $K_0$ as a random walk $(M,\{g_i\},\{\nu_i\})$ which is irreducible and separating. Then $\{V_i\}$ is simple on $K_0$. 
\end{proposition}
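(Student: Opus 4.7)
The plan is to diagonalize the fixed-point equation against the orthonormal basis $(e_c)_{c\in M}$ and then analyze the diagonal and off-diagonal entries of $T$ separately, using irreducibility for the former and the separating property for the latter.

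First, I would set $t_{c,d} := \ip{T e_d}{e_c}$ for $c,d\in M$. Since $K_0$ is $V_i^*$-invariant and $e_c,e_d\in K_0$, pairing the identity $P_{K_0}\bigl(\sum_i V_iTV_i^*\bigr)P_{K_0}=T$ against $e_c$ and $e_d$ and using $V_i^*e_x=\nu_i(x)e_{g_i(x)}$ yields the matrix-level recursion
\[
t_{c,d}=\sum_{i=0}^{N-1}\nu_i(d)\overline{\nu_i(c)}\,t_{g_i(c),g_i(d)}\qquad(c,d\in M).
\]
Iterating this relation $n$ times (and invoking the multiplicative property of the weights $\nu_\omega$ from Definition \ref{def2.6}) gives
\[
t_{c,d}=\sum_{|\omega|=n}\nu_\omega(d)\,\overline{\nu_\omega(c)}\,t_{g_\omega(c),g_\omega(d)}.
\]

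Next I would handle the diagonal case $c=d$. The recursion specializes to $t_{c,c}=\sum_i|\nu_i(c)|^2\, t_{g_i(c),g_i(c)}$, so the function $h(c):=t_{c,c}$ is harmonic for the Markov chain on the finite set $M$ whose transition probabilities are $P(c,c')=\sum_{i:\,g_i(c)=c'}|\nu_i(c)|^2$. Irreducibility of the random walk $(M,\{g_i\},\{\nu_i\})$ is exactly irreducibility of this chain, so a standard maximum-principle argument (the maximum of $h$ is attained, and at a maximizer equality forces $h$ to be constant along every positive-probability transition, which by irreducibility means everywhere) produces a constant $\lambda\in\bc$ with $t_{c,c}=\lambda$ for all $c\in M$.

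For the off-diagonal entries $c\neq d$, I would invoke the separating hypothesis: there exists $n_0=n_0(c,d)$ such that for every word $\omega$ with $|\omega|\ge n_0$, at least one of $\nu_\omega(c)$ and $\nu_\omega(d)$ vanishes, and hence $\nu_\omega(d)\,\overline{\nu_\omega(c)}=0$. Applying the iterated recursion with $n=n_0$, every term on the right-hand side is zero, so $t_{c,d}=0$. Combining the two cases yields $T=\lambda I_{K_0}$, establishing simplicity.

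The main subtlety I anticipate is just getting the iteration bookkeeping right, in particular checking that the product of weights accumulated by repeated substitution agrees with $\nu_\omega(c)$ as defined in Definition \ref{def2.6}, and justifying the harmonic-function step cleanly on a finite state space where separation and irreducibility together pin down the kernel of $I-P$. Everything else is straightforward linear algebra once the recursion on matrix entries is in hand.
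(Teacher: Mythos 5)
Your proposal is correct and follows essentially the same route as the paper's proof: iterate the compressed fixed-point relation to get $t_{c,d}=\sum_{|\omega|=n}\nu_\omega(d)\overline{\nu_\omega(c)}\,t_{g_\omega(c),g_\omega(d)}$, use the separating property to kill the off-diagonal entries, and use irreducibility plus a maximum principle to make the diagonal entries constant. The only cosmetic differences are that the paper first reduces to self-adjoint $T$ (equivalently, you should apply your maximum-principle step to the real and imaginary parts of $h(c)=\ip{Te_c}{e_c}$, since that step needs real values) and reaches an arbitrary vertex by a single positive-probability path from the maximizer rather than by stepwise propagation.
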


\begin{proof}
Let $T$ be as in \eqref{eqdefm1_1}. Since $K_0$ is invariant for $V_i^*$, we have $P_{K_0}V_iP_{K_0}=P_{K_0}V_i$, iterating \eqref{eqdefm1_1} we obtain, for all $n\in\bn$, 
\begin{equation}
\sum_{|\omega|=n}P_{K_0}V_\omega TV_\omega^*P_{K_0}=T
\label{eq2_14_1}
\end{equation}
Then, take $c_1\neq c_2$ in $M$. Since the random walk is separating if we take $n$ large enough, we have that, for any $\omega\in\Omega$ with $|\omega|=n$, either $\nu_\omega(c_1)=0$ or $\nu_\omega(c_2)=0$. Therefore
$$\ip{Te_{c_1}}{e_{c_2}}=\sum_{|\omega|=n}\ip{TV_\omega^* e_{c_1}}{V_\omega^*e_{c_2}}=\sum_{|\omega|=n}\nu_\omega(c_1)\cj\nu_\omega(c_2)\ip{Te_{g_\omega(c_1)}}{e_{g_\omega(c_2)}}=0.$$
So, $T$ has zero off-diagonal entries.

We prove that the diagonal entries are equal. By taking the real and imaginary parts, we can assume that $T$ is self-adjoint. Let $c_0\in M$ such that $\ip{Te_{c_0}}{e_{c_0}}=\max\{\ip{Te_c}{e_c} : c\in M\}.$

Let $c\in M$. Since the random walk is irreducible, there exists $\omega_0\in\Omega$ such that $g_{\omega_0}(c_0)=c$ and $\nu_{\omega_0}(c_0)\neq 0$. Let $n:=|\omega_0|$. We have 

$$\ip{Te_{c_0}}{e_{c_0}}=\sum_{|\omega|=n}\ip{TV_\omega^*e_{c_0}}{V_\omega^*e_{c_0}}=\sum_{|\omega|=n}|\nu_{\omega}(c_0)|^2\ip{Te_{g_\omega(c_0)}}{e_{g_{\omega}(c_0)}}$$$$\leq\sum_{|\omega|=n}|\nu_\omega(c_0)|^2\ip{Te_{c_0}}{e_{c_0}}=\ip{Te_{c_0}}{e_{c_0}}.$$
But then, we must have equalities in all inequalities, and therefore, since $\nu_{\omega_0}(c_0)\neq 0$, we get that $\ip{Te_c}{e_c}=\ip{Te_{c_0}}{e_{c_0}}$. Thus all the diagonal entries of $T$ are the same, and $T$ is a multiple of the identity, so $\{V_i\}$ is simple on $K_0$.  
\end{proof}

\begin{remark}\label{reminc}

In Proposition \ref{princ} we show that, in many cases, the family $\{S_\omega e_c : \omega\in\Omega_c^{(0)}\}$ is incomplete in $H$. 

\begin{proposition}\label{princ}
As in Theorem \ref{thm1}, let $(K,V_i)_{i=0}^{N-1}$ be a row co-isometry and let $(H,S_i)_{i=0}^{N-1}$ be its Cuntz dilation. Suppose there is a subspace $K_0$ of $K$ such that $\{V_i\}$ acts on $K_0$ as an irreducible random walk $(M,\{g_i\},\{\nu_i\})$ with orthonormal basis $(e_c)_{c\in M}$. Assume in addition that $\{V_i\}$ is simple on $K_0$.
\begin{enumerate}
	\item Suppose for all $c\in M$ and all $i\in\{0,\dots,N-1\}$, we have $|\nu_i(c)|$ is either 0 or 1. Then $M$ is a single cycle, $\{V_i\}$ is reversing, and, for $c\in M$, $n\in\bn$, and for $\omega=\omega_0\gamma\in\Omega_c^{(n)}$, with $\omega_0\in\Omega_c^{(0)}$ and $\gamma$ a loop for $c$, $S_{\omega}e_c$ is a unimodular constant multiple of $S_{\omega_0}e_c$; therefore $\{S_\omega e_c : \omega\in\Omega_c^{(0)}\}$ is an orthonormal basis for the space $V=\Span\{ S_\omega e_c : \omega\in\Omega\}$. 
\item 
Suppose that there exists $c\in M$ such that $\nu_i(c)\neq 0$ for two distinct $i\in\{0,\dots,N-1\}$. Then, for all $n\geq 0$, the family $\{S_\omega e_c : \omega\in\Omega_c^{(n)}\}$ is incomplete in $V$. 

\end{enumerate}
\end{proposition}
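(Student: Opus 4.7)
The plan splits along the two parts of the proposition.

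For Part (1), the first observation is that $|\nu_i(c)|\in\{0,1\}$ together with $\sum_i|\nu_i(c)|^2=1$ forces exactly one index $i(c)$ to achieve $|\nu_{i(c)}(c)|=1$, so the random walk becomes deterministic. Irreducibility on the finite set $M$ then forces the map $c\mapsto g_{i(c)}(c)$ to be a cyclic permutation of $M$; hence there is a unique cycle word $\beta$ for $c$ (of length $|M|$), and every loop for $c$ has the form $\beta^p$. The reversing property follows from expanding $e_c=\sum_j V_jV_j^*e_c$: only the $j=i(c)$ term survives and gives $V_{i(c)}e_{g_{i(c)}(c)}=\cj{\nu_{i(c)}(c)}\,e_c$, with the reversing identity vacuous at every other index.

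Next, Lemma \ref{lem4.16} applied at $t=c$ collapses to $e_c=\nu_\beta(c)\,S_\beta e_c$, hence $S_\beta e_c=\cj{\nu_\beta(c)}\,e_c$ (using $|\nu_\beta(c)|=1$). Therefore, for any $\omega=\omega_0\gamma\in\Omega_c^{(n)}$ with $\omega_0\in\Omega_c^{(0)}$ and $\gamma=\beta^p$,
$$S_\omega e_c=S_{\omega_0}(S_\beta)^p e_c=\cj{\nu_\beta(c)}^{\,p}\,S_{\omega_0}e_c,$$
which is a unimodular scalar multiple of $S_{\omega_0}e_c$. Consequently $V=\Span\{S_\omega e_c:\omega\in\Omega\}$ equals $H_c^{(0)}$, and part (a) of Theorem \ref{thm1} supplies the orthonormal basis.

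For Part (2), I will argue by contradiction: assume $\mathcal E_c^{(n)}$ is complete in $V$ for some $n\geq 0$, so $H_c^{(n)}=V$ and in particular $H_c^{(n+1)}\subseteq V=H_c^{(n)}$. The hypothesis that $\nu_i(c),\nu_{i'}(c)\neq 0$ for two distinct indices, combined with irreducibility and recurrence (Lemma \ref{lem4.14}), produces two distinct cycle words $\gamma_0,\gamma_1$ for $c$ (starting with $i$ and $i'$ respectively), each of nonzero weight; Lemma \ref{lem4.15} then forces $|\nu_{\gamma_0}(c)|<1$. My test vector will be $S_{\gamma_0^{n+1}}e_c\in\mathcal E_c^{(n+1)}\subseteq H_c^{(n)}$.

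The delicate step --- and what I expect to be the main obstacle --- is expanding this test vector in the orthonormal basis $\mathcal E_c^{(n)}$ and pinning down the surviving coefficients. By Lemma \ref{lemm2}, $\langle S_{\gamma_0^{n+1}}e_c,S_{\omega'}e_c\rangle\neq 0$ forces one of $\gamma_0^{n+1},\omega'$ to be a prefix of the other with suffix a loop for $c$. Comparing the unique decomposition $\gamma_0^{n+1}=\ty\cdot\gamma_0\cdots\gamma_0$ ($n+1$ cycle-word factors) against the $n$ cycle-word ending forced on $\omega'\in\Omega_c^{(n)}$ via Lemma \ref{leminj}, the only possibility is $\omega'=\gamma_0^n$, and the inner product evaluates to $\cj{\nu_{\gamma_0}(c)}$. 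The expansion thus collapses to
$$S_{\gamma_0^{n+1}}e_c=\cj{\nu_{\gamma_0}(c)}\,S_{\gamma_0^n}e_c,$$
and comparing norms ($1$ on the left, $|\nu_{\gamma_0}(c)|$ on the right) yields $|\nu_{\gamma_0}(c)|=1$, contradicting $|\nu_{\gamma_0}(c)|<1$.
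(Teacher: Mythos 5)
Your proof is correct and follows essentially the same route as the paper: part (1) via the deterministic/cyclic structure and the computation $e_c=\sum_i V_iV_i^*e_c$ giving the reversing property and $S_\beta e_c=\cj{\nu_\beta(c)}e_c$, and part (2) via the same orthogonality analysis (Lemmas \ref{lemm2} and \ref{leminj} isolating $S_{\gamma_0^n}e_c$ as the only surviving term, with coefficient $\cj{\nu_{\gamma_0}(c)}$ of modulus strictly less than $1$). The only cosmetic differences are that you phrase part (2) as a contradiction with the Fourier expansion in the ONB $\mathcal E_c^{(n)}$ rather than exhibiting the nonzero vector $v=S_{\beta_0^{n+1}}e_c-\ip{S_{\beta_0^{n+1}}e_c}{S_{\beta_0^n}e_c}S_{\beta_0^n}e_c$ orthogonal to $H_c^{(n)}$, and that you obtain $S_\beta e_c=\cj{\nu_\beta(c)}e_c$ from Lemma \ref{lem4.16} instead of noting that $\{S_i\}$ is itself reversing; both variants are valid.
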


\begin{proof}
For (i), if the probabilities $|\nu_i(c)|^2$ are all 0 or 1, it follows that from each point in $M$ there is a unique possible transition to another point in $M$, and  since the random walk is irreducible, it follows that $M$ has to be a cycle.

We prove that $V_i$ is reversing. Indeed let $c\in M$ and let $i_0$ be the unique digit such that the transition $c\rightarrow g_{i_0}(c)$ is possible. We have 
$$e_c=\sum_{i=0}^{N-1}V_iV_i^*e_c=\sum_{i=0}^{N-1}\nu_i(c) V_ie_{g_i(c)}=\nu_{i_0}(c)V_{i_0}e_{g_{i_0}(c)},$$
so $V_{i_0}e_{g_{i_0}(c)}=\cj \nu_{i_0}(c)e_c$. Thus $\{V_i\}$ is reversing. 

Let $c\in M$, and let $\beta$ be the unique cycle word for $c$. Using the same argument as before, we get that $\{S_i\}$ is also reversing on the random walk and so $S_\beta e_c=\cj\nu_\beta(c)e_c$. But this implies that if $\omega\in\Omega_c^{(0)}$ then $S_{\omega\beta}e_c=\cj\nu_\beta(c)S_\omega e_c$. By induction, we obtain the last statements in (i).

For (ii), let $n\geq 0$. Let $\beta_0$ be a fixed cycle word for $c$. Let $\beta_0^n:=\underbrace{\beta_0\dots\beta_0}_{\mbox{$n$ times}}$. We prove that $S_{\beta_0^{n+1}}e_c$ is orthogonal to $S_\omega e_c$, for all $\omega\in\Omega_c^{(n)}$, $\omega\neq\beta_0^n$. Indeed, if this is not true, let $\omega=\omega_0\beta_1\dots\beta_n$, for some $\omega_0\in\Omega_c^{(0)}$ and $\beta_1,\dots,\beta_n$ cycle words for $c$, then by Lemma \ref{lemm2}, we have that $\beta_0^{n+1}=\omega_0\beta_1\dots\beta_n\beta_{n+1}\dots\beta_{n+p}$ for some cycle words $\beta_{n+1}\dots\beta_{n+p}$. But, from Lemma \ref{leminj}, we get that $\omega_0=\ty$ and $\beta_1=\dots=\beta_{n+p}=\beta_0$ and $p=1$. So $\omega=\beta_0^n$, a contradiction. 

Note also that, $S_{\beta_0^n}e_c$ is orthogonal to all $S_\omega e_c$, for all $\omega\in\Omega_c^{(n)}$, $\omega\neq\beta_0^n$, by Theorem \ref{thm1}. 

Consider $v=S_{\beta_0^{n+1}}e_c-\ip{S_{\beta_0^{n+1}}e_c}{ S_{\beta_0^n}e_c}S_{\beta_0^n}e_c$, which is perpendicular to $S_{\beta_0^n}e_c$, but also to all $S_\omega e_c$ with $\omega\in\Omega_c^{(n)}$, $\omega\neq \beta_0^n$. Thus $v$ is orthogonal to the subspace $H_c^{(n)}$.

We just have to check that $v$ is not zero.

If there exist $i_1,i_2\in\{0,\dots,N-1\}$, $i_1\neq i_2$, such that the transitions $c\rightarrow g_{i_1}(c)$, $c\rightarrow g_{i_2}(c)$ are possible, then, since the random walk is irreducible, there are possible transitions from $g_{i_1}(c)$ and $g_{i_2}(c)$, back to $c$. Therefore $c$ has at least two distinct cycle words $\beta_1$, $\beta_2$, and $\nu_{\beta_1}(c)\in(0,1)$. We take $c$ and $\beta_0$ in the previous argument to be $\beta_1$. Since $v$ is perpendicular to $S_{\beta_o^n}e_c$, using the Pythagorean theorem, we have:  
$$\|v\|^2=\|S_{\beta_0^{n+1}}e_c\|^2-\|\ip{S_{\beta_0^{n+1}}e_c}{ S_{\beta_0^n}e_c}S_{\beta_0^n}e_c\|^2=1-|\ip{S_{\beta_0}e_c}{e_c}|^2$$$$=1-|\ip{e_c}{S_{\beta_0}^*e_c}|^2=1-|\nu_{\beta_0}(c)|^2>0.$$

Therefore, $0\neq v \perp S_{\omega}e_c$ for all $\omega\in \Omega_c^{(n)}$ and so the family $\{S_\omega e_c: \omega\in\Omega_c^{(0)}\}$ is incomplete in $H$.

\end{proof}

\end{remark}

Let us provide an application of Theorem \ref{thm1} in the case when a row co-isometry admits periodic points. 

\begin{definition}
A non empty word $\beta$ is called irreducible if there is no word $\omega$ such that $\beta=\underbrace{\omega\omega\dots \omega}_{k \text{ times}}$ with $k\geq 2$. Let $\Omega_\beta$ be the set of words that do not end in $\beta$, including the empty word.
\end{definition}

\begin{theorem}\label{pmain}
Let $(H, S_i)_{i=0}^{N-1}$ be the Cuntz dilation of the row co-isometry $(K, V_i)_{i=0}^{N-1}$. Suppose $v_0\in K$ is a unit vector such that $V_{i_{p-1}}^*\dots V_{i_{0}}^*v_0=v_0 $ for some irreducible word $\beta=i_0\dots i_{p-1}$. Then the following are equivalent:

\begin{enumerate}
	\item $\{ V_{\omega}v_0\text{ }|\text{ }\omega \in\Omega_{\beta}\}$ is a Parseval frame.
	\item $\{ S_{\omega}v_0\text{ }|\text{ }\omega \in\Omega_{\beta}\}$ is ONB in $H$.
	\item $\{ S_{\omega}v_0\text{ }|\text{ }\omega \in\Omega\}$ spans $H$.
	\item The representation $(H, S_i)_{i=0}^{N-1}$ is irreducible.
	\item The commutant of the Cuntz dilation is trivial.
	\item$\mathcal{B}^{\sigma}(K)=\mathbb{C}\mathbf{1}_K$.	
\end{enumerate}
 
\end{theorem}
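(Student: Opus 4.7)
The plan is to apply Theorem \ref{thm1} to the orbit of $v_0$ under the operators $V_i^*$, supplemented by Proposition \ref{princ}(i) for (ii) and the commutant isomorphism of Theorem \ref{dil} for (v) and (vi). First I set $v_k:=V_{i_{k-1}}^*\cdots V_{i_0}^* v_0$ for $k=0,\dots,p$. Since $V_jV_j^*\le I$ gives $\|V_j^*x\|\le\|x\|$, the norms $\|v_k\|$ are non-increasing, and being equal at the endpoints ($\|v_0\|=\|v_p\|=1$) they are all $1$. Then $\sum_j\|V_j^*v_k\|^2=\|v_k\|^2=1=\|V_{i_k}^*v_k\|^2$ forces $V_j^*v_k=0$ for $j\neq i_k$. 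In the Cuntz dilation, where $S_j^*=V_j^*$ on $K$, the expansion $v_0=\sum_j S_jS_j^*v_0=S_{i_0}v_1$ iterates to give the two key identities $v_0=S_\beta v_0$ and $v_k=S_{i_k i_{k+1}\cdots i_{p-1}}v_0$ in $H$. In particular $V_{i_k}v_{k+1}=pS_{i_k}v_{k+1}=pv_k=v_k$, which will supply the reversing property.

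Distinctness of $v_0,\dots,v_{p-1}$ follows because $i_k$ is recovered from $v_k$ as the unique digit with $V_{i_k}^*v_k\neq 0$, so any coincidence $v_k=v_{k+d}$ with $0<d<p$ would force $\beta$ to be $d$-periodic, contradicting irreducibility. Orthogonality is the technical core: for $0<\ell<p$, Lemma \ref{lemo} applied to $\ip{v_0}{v_\ell}=\ip{S_\beta v_0}{S_{i_\ell\cdots i_{p-1}}v_0}$ shows that nonvanishing forces $i_\ell\cdots i_{p-1}$ to be a prefix of $\beta$, producing the partial identity $i_m=i_{m+\ell}$ for $m\in[0,p-\ell-1]$. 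Cancelling this common prefix reduces the inner product to $\ip{S_{\beta'}v_0}{S_\beta v_0}$ with $\beta':=i_{p-\ell}\cdots i_{p-1}$; Lemma \ref{lemo} again forces $\beta'$ to be a prefix of $\beta$, giving $i_{p-\ell+m}=i_m$ for $m\in[0,\ell-1]$. Together the two partial periodicities yield $i_n=i_{(n+\ell)\bmod p}$ for every $n$, so $\beta=\omega^{p/\gcd(\ell,p)}$ with exponent $\geq 2$, contradicting irreducibility. For general $j\neq 0$ the same argument applies to the cyclically shifted word $i_j i_{j+1}\cdots i_{j-1}$ (still irreducible), giving $v_j\perp v_k$ in full generality.

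With $K_0:=\Span\{v_0,\dots,v_{p-1}\}$ and $M=\{v_0,\dots,v_{p-1}\}$, the co-isometry acts as a deterministic cycle: $g_{i_k}(v_k)=v_{(k+1)\bmod p}$, $\nu_{i_k}(v_k)=1$, and zero otherwise. The hypotheses of Theorem \ref{thm1} are then routine — $V_i^*K_0\subset K_0$, injectivity of $g_i$ on the support of $\nu_i$, irreducibility, and reversing; separation holds because for $m\geq p$ the length-$m$ trajectories from distinct $v_{j_1},v_{j_2}$ are distinct cyclic shifts of $\beta$, again by irreducibility, which gives simplicity via Proposition \ref{pr2_14}. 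The unique cycle word for $v_0$ is $\beta$ itself, so $\Omega_{v_0}^{(0)}=\Omega_\beta$. Theorem \ref{thm1} then yields the equivalences (i)$\Leftrightarrow$(iii)$\Leftrightarrow$(iv)$\Leftrightarrow$(vi) of Theorem \ref{pmain}, matching its items (iv), (i), (ii), (iii) respectively (with (vi) routed through Theorem \ref{dil}).

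For the remaining claims, since all $|\nu_i(c)|\in\{0,1\}$, Proposition \ref{princ}(i) shows that $\{S_\omega v_0:\omega\in\Omega_\beta\}$ is an orthonormal basis for $V:=\Span\{S_\omega v_0:\omega\in\Omega\}$, so (ii) is the same as $V=H$, i.e., (iii). Finally (iv)$\Leftrightarrow$(v) is Schur's lemma and (v)$\Leftrightarrow$(vi) is the commutant-fixed-point isomorphism $\{S_i\}'\cong\mathcal{B}(K)^\sigma$ from Theorem \ref{dil}. The main obstacle in this plan is the orthogonality step, which pivots on the combinatorial observation that two partial cyclic symmetries of $\beta$ of the form above combine to a full cyclic symmetry, and hence clash with irreducibility.
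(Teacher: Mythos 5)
Your proposal is correct and follows essentially the same route as the paper: build the deterministic $0$--$1$ cycle random walk on the orbit $\{v_0,\dots,v_{p-1}\}$, verify the conditions of Definition \ref{defm1} together with irreducibility, the reversing property, and the separating property (the latter two via the same word-combinatorics/gcd clash with irreducibility of $\beta$ that the paper uses in its infinite-word argument), obtain simplicity from Proposition \ref{pr2_14}, and conclude by Theorem \ref{thm1} together with Proposition \ref{princ}(i) and Theorem \ref{dil}. The only differences are minor local shortcuts (deriving $V_j^*v_k=0$ for $j\neq i_k$ from the co-isometry norm identity rather than the paper's projection argument, and getting $S_\beta v_0=v_0$ by iterating the Cuntz expansion instead of Lemma \ref{l2}), which do not change the substance of the proof.
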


\begin{proof}
We will use the following lemma:

\begin{lemma}\label{l2} Let $S:H\to H$ with $\|S\|\leq 1$, and $v\in H$ then $Sv=v$ if and only if $S^*v=v$.
\end{lemma}
\begin{proof}  If $S^*v=v$ then, since $\|S^*\|=\|S\|\leq 1$, we have
$$0\leq \|Sv-v\|^2=\ip{Sv-v}{Sv-v}=\|Sv\|^2+\|v\|^2-\ip{v}{S^*v}-\ip{S^*v}{v}$$$$=\|Sv\|^2-\|v\|^2\leq \|v\|^2-\|v\|^2=0.$$
Therefore, $Sv=v$.

For the converse, replace $S$ by $S^*$.
\end{proof}

With the requirements of Theorem \ref{thm1} in mind we construct the following data:

\begin{equation}
M:=\{ v_0,  V_{i_{0}}^{*}v_0,  V_{i_1}^{*}V_{i_{0}}^{*}v_0, \dots ,V_{i_{p-2}}^{*} \dots V_{i_{0}}^{*}v_0  \},\quad K_0:=\cj{\text{span}} M
\label{km}
\end{equation}
and $e_c:=c$ for $c\in M$.
\begin{equation}
g_i:M\to M, \quad
g_i( v_0)=
\begin{cases} 
v_0 & \, \text{if }i\neq i_{0}\\
V_{i_{0}}^{*}v_0 & \,\text{otherwise} 
\end{cases}
,\quad g_{i}(V_{i_{k}}^{*} \dots V_{i_{0}}^{*}v_0    )=
\begin{cases}
V_{i_{k}}^{*} \dots V_{i_{0}}^{*}v_0    & \quad \text{if }i\neq i_{k+1}\\
V_{i_{k+1}}^{*}V_{i_{k}}^{*} \dots V_{i_{0}}^{*}v_0 & \quad\text{otherwise} 
\end{cases}
\label{gi}
\end{equation}

\begin{equation}
\nu_i: M\to\mathbb{C}, \quad 
\nu_i (v_0)= 
\begin{cases} 
0 & \quad \text{if }i\neq i_{0}\\
1 & \quad\text{otherwise} 
\end{cases}
,\quad 
\nu_i ( V_{i_{k}}^{*} \dots V_{i_{0}}^{*}v_0    )=
\begin{cases}
0 & \quad \text{if }i\neq i_{k+1}\\
1 & \quad\text{otherwise} 
\end{cases}
\label{nui}
\end{equation}

Note that the maps $g_i$ are trivially one-to-one when the transitions are possible. 

Notice that for words $\omega$ containing a digit $i\notin\{i_0,i_1,\dots , i_{p-1}\}$ the transitions $c\to g_{\omega}(c)$ are not possible because $\nu_{\omega}(c)=0$ in this case. 
Because $v_0\in K$ we see that
\[
    M=\{    v_0,  S_{i_{0}}^{*}v_0,  S_{i_1}^{*}S_{i_{0}}^{*}v_0,\dots , S_{i_{p-2}}^{*} \dots S_{i_{0}}^{*}v_0     \}.
\]
\begin{claim}
$M$ forms an orthonormal basis for $K_0$.
\end{claim}
First we notice the elements in $M$ are unit vectors. 
Because $\left\|S_k^*\right\|=1$ for all $0\leq k\leq p-1$,  we have:
$$1=\left\|v_0\right\|=\left\| S_{i_{p-1}}^*\dots S_{i_{0}}^*v_0  \right\| \leq  \left\| S_{i_{k}}^*\dots S_{i_{0}}^*v_0 \right\| \leq  \left\| S_{i_{k}}^*\dots S_{i_{0}}^*\right\|\left\|v_0\right\|\leq 1$$
To show, for $k\neq l$
\begin{equation}\label{adperp}
\ip{ S_{i_{k}}^*\dots S_{i_0}^* v_0   }{  S_{i_{l}}^*\dots S_{i_0}^* v_0   }=0
\end{equation}
assume without loss of generality $k < l$ where $0\leq k, l\leq p-1$. \\
With  Lemma \ref{l2}, $S_{i_{p-1}}^*\dots S_{i_{0}}^*v_0=v_0 $ entails  $S_{\beta}v_0=v_0$. 
Using the Cuntz orthogonality relations $S_i^*S_i= I $ and $v_0=S_{\beta}v_0=S_{i_0}S_{i_1}\dots S_{i_{p-1}}v_0$, the left-hand side in \eqref{adperp} can be rewritten as $  \ip{ S_{i_{k+1}}\dots S_{i_{p-1}} v_0  }{    S_{i_{l+1}}\dots S_{i_{p-1}} v_0  }$. 
If this last term would not vanish then with  Lemma \ref{lemm2} we get that $\gamma:= i_j\dots i_{p-1}$ is a loop for $v_0$, for some $j> k$. From \eqref{gi} we have to have $S_{i_{p-1}}^*\dots S_{i_{j}}^*v_0=v_0$, which then gives $S_{i_{j}}\dots S_{i_{p-1}}v_0=v_0$  with Lemma \ref{l2}. Exploiting these fixed point properties for $S_{\gamma}$ and $S_{\beta}$ we have succesively in finitely many steps  
$$\ip{v_0}{v_0}=\ip{S_{\beta\beta\dots \beta}v_0}{S_{\gamma\beta\beta\dots \beta}v_0}.$$
Since the isometries $(S_l)_l$ have orthogonal ranges we have that the infinite words are equal: \\
$\beta\beta\dots =\gamma\beta\beta\dots $ with $\gamma$ a word shorter than $\beta$.  Then 
$\gamma (\beta\beta\dots )=\gamma\gamma\beta\beta\dots =\dots =\gamma\gamma\dots \gamma\beta\beta\dots $, which implies that 
$$\gamma\gamma\gamma\dots=\beta\beta\beta\dots ,\quad\text{ with }q:=|\gamma|<|\beta|=p.$$
Let $d=\text{gcd }(p,q)<p$. Then there exists $m$, $n$ in $\mathbb{Z}$ such that $mp+nq=d$. 
Denote $\gamma_k=\gamma_{k\text{ mod  }q}$ and $\beta_k=i_{k\text{ mod  }p}$ for all $k\in\mathbb{Z}$. Then from the last equality above with infinite words, we have for $\gamma_k=\beta_k$ for all $k\in\mathbb{Z}$. Then for all $k$: 
$$\beta_{d+k}=\beta_{mp+nq+k}=\beta_{nq+k}=\gamma_{nq+k}=\gamma_k=\beta_k.$$
 So $\beta\beta\dots $ actually has period $d$, i.e. $\beta=\underbrace{\alpha\alpha\dots \alpha}_{p/d \text{ times}}$ where $\alpha=i_0\dots i_{d-1}$. This contradicts the fact that $\beta$ is irreducible. 
Hence \eqref{adperp} holds and the Claim is proved.

Next, we check that the notions introduced in Definition \ref{defm1} are satisfied in this setting. 
\begin{claim} $V_i^{*}K_0\subset K_0$ for all $i\in\{0,\dots , N-1\}$. Thus item (i) in Definition \ref{defm1} holds.
\end{claim}
From the definition of $M$ above we see that $V_{i_k}^{*}( c )\in M$ if  $c=V_{i_{k-1}}^*\dots V_{i_0}^* v_0$. For the remaining cases we prove that $V_i^{*}(c)=0$, thus finishing the Claim. \\
 $S_{i_{p-1} }^*\dots S_{i_{0} }^* v_0=v_0$. 
Let $0\leq k\leq p-1$. Then $S_{i_0}\dots S_{i_k}S_{i_k}^*\dots S_{i_0}^*$ is the projection onto $S_{i_0}\dots S_{i_k} H$, and is smaller than the projection $S_{i_0}\dots S_{i_{k-1}}S_{i_{k-1}}^*\dots S_{i_0}^*$. Hence
$$\| v_0 \|^2=\|  S_{i_0}\dots S_{i_{p-1}}S_{i_{p-1}}^*\dots S_{i_0}^* v_0  \|^2\leq \|     S_{i_0}\dots S_{i_k}S_{i_k}^*\dots S_{i_0}^* v_0       \|^2 \leq \|v_0\|^2$$
which implies $\| v_0 \|^2=\|     S_{i_0}\dots S_{i_k}S_{i_k}^*\dots S_{i_0}^* v_0       \|^2$. Since $S_{i_0}\dots S_{i_k}S_{i_k}^*\dots S_{i_0}^*$ is a projection, it follows that  

\begin{equation}\label{mpfix}
v_0 = S_{i_0}\dots S_{i_k}S_{i_k}^*\dots S_{i_0}^* v_0.   
\end{equation}

Then using the Cuntz relations 
$$\sum_{|\omega|=k+1}\ip{S_{\omega}S_{\omega}^*v_0 }{v_0}=\ip{v_0}{v_0} =\ip{  S_{i_0\dots i_k}S_{i_k}^*\dots S_{i_0}^* v_0    }{v_0}.$$
It follows that $\ip{S_{\omega}S_{\omega}^*v_0 }{v_0}=0$ $\forall$ $\omega\neq i_0\dots i_k$, $|\omega|=k+1$. Hence
\begin{equation}\label{szero}
V_{\omega}^{*}v_0=S_{\omega}^*v_0=0\quad\forall \text{ }\omega\neq i_0\dots i_k,\text{ } |\omega|=k+1.
\end{equation}
This concludes the Claim. \\
Notice that item (ii) in Definition \ref{defm1} is satisfied because of equation \eqref{szero} and the choice of $M$, $g_i$ and $\nu_i$. Also the random walk $(M, \{g_i\}, \{ \nu_i\}  )$ is irreducible: if $c_1=V_{i_{k}}^{*} \dots V_{i_{0}}^{*}v_0\in M$ and $c_2=V_{i_{l}}^{*} \dots V_{i_{0}}^{*}v_0\in M$ with say $k<l$ then $g_{\omega}(c_1)=c_2$ and $\nu_{\omega}(c_1)=1 $, where 
$\omega=i_{k+1}\dots i_l$.  
\begin{claim}
$\{V_i\}$ is {\it{reversing}} on the random walk $(M, \{g_i\}, \{ \nu_i\}  )$.   
\end{claim}
Suppose $\nu_i(c)\neq 0$ for $c\in M$.  We need show $V_i(g_i(c))=\cj{\nu_i(c) }c$. Because $c\in M$ we have $c=V_{i_{k}}^*\dots V_{i_0}^* v_0$ for some $0\leq k\leq p-1$. 
 From the definition of the $\nu_i$ we must have $i=i_{k+1}$ (the case $c=v_0$ follows similarly). What is needed to show now is 
\begin{equation}\label{revs}
V_{i_{k+1}}(  V_{i_{k+1}}^*\dots V_{i_0}^* v_0   )= V_{i_{k}}^*\dots V_{i_0}^* v_0.
\end{equation}
 From \eqref{mpfix} with $k\to k+1$ and rewriting using the Cuntz relations $S_i^*S_i=I$ we have
$$S_{i_k}^*\dots S_{i_0}^* v_0= S_{i_{k+1}}(  S_{i_{k+1}}^*\dots S_{i_0}^* v_0     ).    $$
We apply the projection $P_K$ on both sides and use $v_0\in K$ and Lemma \ref{l1} to obtain \eqref{revs}.  
\begin{claim} The random walk $(M, \{g_i\}, \{ \nu_i\}  )$ is separating. 
\end{claim}
Let $c_1:=V_{i_{k}}^{*} \dots V_{i_{0}}^{*}v_0$ and $c_2:=V_{i_{l}}^{*} \dots V_{i_{0}}^{*}v_0$ in $M$, with $0\leq k< l \leq p-1$. 
We prove that if $\omega$ is a word with $|\omega|\geq p$ then either $\nu_{\omega}(c_1)=0$ or $\nu_{\omega}(c_2)=0$. Suppose by contradiction that for some $\omega=j_0j_1\dots j_t$ with $t\geq p$ both $\nu_{\omega}(c_1)$ and $\nu_{\omega}(c_2)$ are non zero. Using \eqref{nui} we have $j_0=i_{k+1}=i_{l+1}$, $j_1=i_{k+2}=i_{l+2}$, and continuing we get $i_{k+t \text{ mod }p}=i_{l+t \text{ mod }p} $ for all $t\in\mathbb{N}$. As a consequence the following infinite words are equal: 
$$i_{k+1}\dots i_{p-1}\beta\beta\dots = i_{l+1}\dots i_{p-1}\beta\beta\dots $$

Concatenating the word $i_0i_1\dots i_k$ on the left hand side we  obtain
$$\beta\beta\beta\dots =\gamma \beta\beta \dots \quad \text{where }\gamma=i_0i_1..i_ki_{l+1}\dots i_{p-1}.$$
Using the same argument as above we get a contradiction with the fact that $\beta$ is irreducible.

With Proposition \ref{pr2_14}, it follows that $\{V_i\}$ is simple on $K_0$.

We are done checking the settings and requirements of Theorem \ref{thm1}. We apply its item (d) and finish the proof. 
\end{proof}

In the particular case when one of the co-isometries admits a fixed point one easily obtains the following characterizations. 

\begin{corollary}\label{main}
Let $(H, S_i)_{i=0}^{N-1}$ be the Cuntz dilation of row co-isometry $(K, V_i)_{i=0}^{N-1}$, and $v_0\in K$ unit vector such that $V^*_0(v_0)=v_0$. The following are equivalent:
\begin{enumerate}
	\item The family of vectors $\{V_{\omega}v_0\}_{\omega\in\Omega_0}$ is a Parseval frame in $K$.
	\item The family of vectors $\{S_{\omega}v_0\}_{\omega\in\Omega_0}$ is ONB in $H$.
		\item The family of vectors $\{S_{\omega}v_0\}_{\omega\in\Omega_0}$ spans $H$.
	\item The representation $(H, S_i)_{i=0}^{N-1}$ is irreducible.
	\item The commutant of the Cuntz dilation is trivial.
	\item  $\mathcal{B}^{\sigma}(K)=\mathbb{C}\mathbf{1}_K$.
\end{enumerate}

\end{corollary}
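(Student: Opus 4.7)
The plan is to deduce Corollary \ref{main} as the direct specialization of Theorem \ref{pmain} obtained by taking the irreducible word $\beta$ to consist of the single digit $0$, so that $p=1$ and $i_0=0$. Under this choice, the hypothesis $V_{i_{p-1}}^*\cdots V_{i_0}^*v_0=v_0$ of Theorem \ref{pmain} reduces precisely to the assumption $V_0^*v_0=v_0$ of the corollary. A single-letter word is automatically irreducible, since it cannot be written as $\underbrace{\omega\omega\cdots\omega}_{k \text{ times}}$ with $k\geq 2$. Thus the hypotheses of Theorem \ref{pmain} are met, and the set $\Omega_\beta$ appearing there coincides with $\Omega_0$, the set of words not ending in the digit $0$ (including the empty word).

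With these substitutions in hand, conditions (1), (2), (4), (5), (6) of the corollary match verbatim the corresponding items (i), (ii), (iv), (v), (vi) of Theorem \ref{pmain}, so their equivalence is immediate. The only small point requiring attention is statement (3), which uses the index set $\Omega_0$ while (iii) of Theorem \ref{pmain} uses all of $\Omega$. To reconcile these, I would argue that $V_0^*v_0=v_0$, combined with the dilation identity $V_0^*p=S_0^*p$ from Theorem \ref{dil} and the fact that $v_0\in K$, forces $S_0^*v_0=v_0$; then Lemma \ref{l2} yields $S_0 v_0=v_0$. Consequently any word $\omega$ ending in a string of zeros produces the same vector $S_\omega v_0$ as the word obtained by stripping those trailing zeros, so
$$\Span\{S_\omega v_0 : \omega\in\Omega\} = \Span\{S_\omega v_0 : \omega\in\Omega_0\},$$
and (3) $\Leftrightarrow$ (iii).

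I do not expect any real obstacle here: the substantive work has already been carried out in Theorem \ref{pmain}, and all that remains is this essentially formal check of the single-digit case together with the minor bookkeeping needed to identify the index sets in statement (3). The reason for isolating the corollary is that the fixed-point form $V_0^* v_0 = v_0$ is the shape in which the hypothesis most naturally appears in the applications of Section \ref{secex}.
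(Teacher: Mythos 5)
Your proposal is correct and is exactly the route the paper intends: Corollary \ref{main} is stated as the specialization of Theorem \ref{pmain} to the single-digit irreducible word $\beta=0$, with $\Omega_\beta=\Omega_0$. Your extra observation reconciling item (3) with item (iii) of Theorem \ref{pmain} --- using $V_0^*v_0=v_0$, the dilation relation to get $S_0^*v_0=v_0$, and Lemma \ref{l2} to get $S_0v_0=v_0$, so trailing zeros can be stripped without changing the span --- is a legitimate small step that the paper leaves implicit, and it is handled correctly.
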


\begin{example}
If the Cuntz dilation is not irreducible, it is possible that $\{V_\omega v_0\}_{\omega\in\Omega_0}$ is not even a frame for its span.

Define the operators $V_0^*,V_1^*$ on $l^2(\mathbb Q)$ by
$$V_0^*e_r=\lambda_r^0e_{r/2},\quad V_1^*e_r=\lambda_r^1e_{(r-1)/2},\quad (r\in\mathbb Q),$$
where $e_r$ is the canonical basis in $l^2(\mathbb Q)$, $e_r(q)=\delta_{r,q}$, $r,q\in\mathbb Q$, and
$\lambda_0^0=1$, $\lambda_0^1=0$, and $\lambda_r^i=1/\sqrt2$ for $r\neq 0$, $i=0,1$. Thus $|\lambda_r^0|^2+|\lambda_r^1|^2=1$ for all $r$.

Denote
$$f_0(r)=\frac{r}2,\quad f_1(r)=\frac{r-1}2,\quad (r\in\mathbb Q),$$
with inverses
$$h_0(r)=2r, \quad h_1(r)=2r+1,\quad (r\in\mathbb Q).$$

Clearly $V_i^*$ defines a bounded operator on $l^2(\mathbb Q)$. We compute the adjoints

$$\ip{V_ie_r}{e_q}=\ip{e_r}{V_i^*e_q}=\cj\lambda_q^i\delta_{rf_i(q)}=\cj\lambda_q^i\delta_{h_i(r)q}=\cj\lambda_{h_i(r)}^i\delta_{h_i(r)q}=\ip{\cj\lambda_{h_i(r)}^ie_{h_i(r)}}{e_q}.$$
Therefore
$$V_ie_r=\cj\lambda_{h_i(r)}^ie_{h_i(r)},\quad (r\in\mathbb Q,i=0,1).$$
Then, we have
$$\sum_{i=0}^1V_iV_i^*e_r=\sum_{i=0}^1\lambda_r^iV_ie_{f_i(r)}=\sum_{i=0}^1\lambda_r^i \cj\lambda_{h_i(f_i(r))}^i e_{h_i(f_i(r))}=\sum_{i=0}^1|\lambda_r^i|^2e_r=e_r.$$
Thus $V_0V_0^*+V_1V_1^*=I.$

Also, if $v_0=e_0$ then $V_0^*v_0=v_0$.

Now, for $i_1,\dots,i_n\in\{0,1\}$, we have

$$V_{i_1}\dots V_{i_n}(V_1v_0)=V_{i_1}\dots V_{i_n}(\cj\lambda_{1}^1e_{1})=V_{i_1}\dots V_{i_{n-1}}(\cj\lambda_{h_{i_n}(1)}^1\cj\lambda_1^{1} e_{h_{i_n(1)}})$$$$=\dots
=\cj\lambda_{h_{i_1}\dots h_{i_n}(1)}^{i_1}\cj\lambda_{h_{i_2}\dots h_{i_n}(1)}^{i_2}\dots \cj\lambda_{h_{i_n}(1)}^{i_n}\cj\lambda_1^1 e_{h_{i_1}\dots h_{i_n}(1)}=\left(\frac1{\sqrt2}\right)^{n+1}e_{2^n+2^{n-1}i_n+2^{n-2}+\dots+i_1}.$$

Consider the vector $e_{2^m}$, $m\geq 1$. If $2^m=2^n+2^{n-1}i_n+2^{n-2}+\dots+i_1$, then $n=m$ and $i_n=\dots=i_1=0$, and
$$\ip{e_{2^m}}{V_{i_1}\dots V_{i_n}(V_1e_0)}=\left(\frac1{\sqrt2}\right)^{m+1}.$$

If $2^m\neq 2^n+2^{n-1}i_n+2^{n-2}+\dots+i_1$, then

$$\ip{e_{2^m}}{V_{i_1}\dots V_{i_n}(V_1e_0)}=0.$$

If $\{V_\omega v_0\}_{\omega\in\Omega_0}$ is a frame,then for a frame constant $A>0$, we have
$$A=A\|e_{2^m}\|^2\leq \sum_\omega |\ip{e_{2^m}}{V_\omega v_0}|^2=\left[\left(\frac1{\sqrt2}\right)^{m+1}\right]^2.$$ Letting $m\rightarrow\infty$, we get $A=0$, a contradiction.

\end{example}

\begin{proof}[Proof of Theorem \ref{th3.5}]
We remark first the following: suppose $M_1,M_2$ are two different minimal compact invariant sets. Then they have to be disjoint (see Lemma \ref{lem4.11-0}). Indeed, if they are not, then $M_1\cap M_2$ is compact invariant set which is contained in $M_1$, and, since $M_1$ is minimal, $M_1\cap M_2=M_1$, and similarly for $M_2$, which is a contradiction.

We also note, that since the functions $t\mapsto V_i^*e_t$ and $g_i(t)$ are continuous, and using relation \eqref{eqas2}, we get 
$$\nu_i(t)=\ip{\nu_i(t)e_{g_i(t)}}{e_{g_i(t)}}=\ip{V_i^*e_t}{e_{g_i(t)}},$$
it follows that the maps $\nu_i(t)$ are also continuous.

\begin{lemma}\label{lem4.11-0}
For $a\in \mathcal{T}$, the closed orbit $\cj{\mathcal O(a)}$ is compact and invariant. 
If $M$ is a minimal compact invariant set, and $x_0\in M$, then $M=\cj{\mathcal O(x_0)}$. If $M_1, M_2$ are distinct compact minimal invariant sets, they are disjoint. 

\end{lemma}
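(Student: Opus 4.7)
The plan is first to show $\overline{\mathcal O(a)}$ is compact, then that it is invariant, and finally to deduce the minimality statement.

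For compactness, the natural tool is the IFS attractor. Since $\mathcal T$ is complete and each $g_i$ is a contraction, the Hutchinson operator $F(C)=\bigcup_i g_i(C)$ is a contraction on the complete metric space of non-empty compact subsets of $\mathcal T$ equipped with the Hausdorff metric, so by Banach's fixed point theorem it has a unique fixed point: a non-empty compact set $X\subseteq\mathcal T$ with $X=\bigcup_{i=0}^{N-1} g_i(X)$. Set $r:=\max_i \operatorname{Lip}(g_i)<1$. Then $g_\omega(X)\subseteq X$ for every $\omega$, so for any fixed $x\in X$,
$$d(g_\omega(a),X)\le d(g_\omega(a),g_\omega(x))\le r^{|\omega|}d(a,x)\xrightarrow[|\omega|\to\infty]{}0.$$
Given $\varepsilon>0$, choose $N$ with $r^N d(a,x)<\varepsilon$. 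Then $\{g_\omega(a):|\omega|\ge N\}$ lies in the $\varepsilon$-neighborhood of $X$, which, by compactness of $X$, can be covered by finitely many $2\varepsilon$-balls. Since $\{g_\omega(a):|\omega|<N\}$ is finite, the whole set $\{g_\omega(a):\omega\in\Omega\}\supseteq\mathcal O(a)$ is totally bounded; completeness of $\mathcal T$ then gives compactness of $\overline{\mathcal O(a)}$.

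For invariance, suppose $t\in\overline{\mathcal O(a)}$ and the transition $t\to g_i(t)$ is possible, i.e.\ $\nu_i(t)\neq 0$. Pick $t_k=g_{\omega_k}(a)\in\mathcal O(a)$ (with $\nu_{\omega_k}(a)\neq 0$) such that $t_k\to t$. By the continuity of $\nu_i$ noted at the start of the proof of Theorem~\ref{th3.5}, $\nu_i(t_k)\neq 0$ for all $k$ large, so $\nu_{\omega_k i}(a)=\nu_i(t_k)\nu_{\omega_k}(a)\neq 0$, which means $g_i(t_k)=g_{\omega_k i}(a)\in\mathcal O(a)$. Continuity of $g_i$ then gives $g_i(t_k)\to g_i(t)\in\overline{\mathcal O(a)}$, as needed.

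For the second statement, let $M$ be a minimal compact invariant set and $x_0\in M$. Invariance of $M$ yields $\mathcal O(x_0)\subseteq M$, and since $M$ is closed, $\overline{\mathcal O(x_0)}\subseteq M$. By the two previous steps $\overline{\mathcal O(x_0)}$ is a non-empty compact invariant subset of $M$, so minimality of $M$ forces $\overline{\mathcal O(x_0)}=M$. The only mildly subtle point in the proof is the compactness step: boundedness of $\mathcal O(a)$ is immediate from $r<1$, but total boundedness in a general complete metric space requires more, and this is precisely what the attractor $X$ supplies by trapping all long-word iterates in a compact set.
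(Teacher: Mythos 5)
Your proof is correct and follows essentially the same route as the paper: compactness of $\cj{\mathcal O(a)}$ via the attractor of the IFS $\{g_i\}$ together with the uniform contraction estimate $d(g_\omega(a),g_\omega(x))\le r^{|\omega|}d(a,x)$, invariance of the closure via continuity of $g_i$ and $\nu_i$, and the minimality statement by noting $\cj{\mathcal O(x_0)}$ is a compact invariant subset of $M$. The only (immaterial) difference is that you argue total boundedness plus completeness where the paper extracts a convergent subsequence directly.
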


\begin{proof} First, we prove that the entire orbit of $a$, that is $O:=\cj{\{g_\omega(a) : \omega\in\Omega\}}$ is compact. Let $c\in (0,1)$ be a common contraction constant for all $(g_i)$, $i=0,\dots ,N-1$.  
It suffices to show that any sequence in $O$ has a convergent subsequence. Now let $(g_{\omega_n}(a))_{n\geq 1}$, $\omega_n\in\Omega$, be a sequence in the orbit of $a$ and fix some $x_0\in X_0$. By passing to a subsequence, we may assume that the sequence of lengths $|\omega_n|\to \infty$, otherwise the orbit would be finite, thus one could select a constant subsequence.  

Let $X_0$ be the unique compact attractor of the iterated function sytem $(g_i)_{i=0}^{N-1}$, i.e., the unique compact subset with the property that 
$$X_0=\bigcup_{i=0}^{N-1}g_i(X_0).$$
See \cite[Section 3]{Hut81} or \cite{MR3838440} for details.
 
Because the attractor $X_0$ is compact, one can extract a convergent subsequence of $(g_{\omega_n}(x_0))_{n\geq 1}$. There is no loss of generality if we relabel the subsequence and still denote it by $(g_{\omega_n}(x_0))_{n\geq 1}$. Say $g_{\omega_n}(x_0)\to x\in X_0$. Let $\epsilon>0$ and $N$ large enough so that $c^{|{\omega_n}|}d(a,x_0)<\epsilon/2$ and $d(g_{\omega_n}(x_0),x)<\epsilon/2$ whenever $n>N$. Using the triangle inequality, we have, for $n>N$:
$$d(g_{\omega_n}(a), x)\leq d(g_{\omega_n}(a), g_{\omega_n}(x_0)  )+d(g_{\omega_n}(x_0),x)\leq c^{|\omega_n|}d(a,x_0)+d(g_{\omega_n}(x_0),x)<\epsilon $$
Hence the subsequence $(g_{\omega_n}(a))_{n\geq 1}$ is convergent.

 Next we show that $\cj{\mathcal O(a)}$ is invariant. Note first that $\mathcal{ O}(a)$ is invariant: indeed if $b\in \mathcal{O}(a)$ with $b\to g_i(b)$ possible then $b=g_{\omega}(a)$ with $a\to b$ possible. So $\nu_i(b)\neq 0$ and $\nu_{\omega i}(a)=\nu_{\omega}(a)\nu_i(b)\neq 0$. It follows that $a\to g_{\omega i}(a)=g_i(b)$ is possible, hence $g_i(b)\in \mathcal{O}(a)$.  Now, if $b\in   \cj{\mathcal O(a)}$ with $b\to g_i(b)$ possible (so $\nu_i(b)\neq 0$) then one can find a sequence $ b_n\in \mathcal O(a)$ with $\displaystyle{\lim_{n\to \infty} b_n=b}$. Hence the transitions $a\to b_n=g_{\omega_n}(a)$ are possible, where $\omega_n$ are words depending on $n$. Because $g_i$ and $\nu_i$ are continuous we obtain $\displaystyle{g_i(b)=\lim_{n\to \infty}g_i(b_n)}$ and $\nu_i(b_n)\neq 0$ for $n$ large enough. It follows that the transitions 
$a\to g_{\omega_n i}(a)=g_i(b_n)$ are possible, so $g_i(b_n)\in \mathcal{O}(a)$. Because $g_i(b)=\displaystyle{\lim_{n\to \infty}g_i(b_n)}$ we obtain $g_i(b)\in     \cj{\mathcal O(a)}$.  

If $x_0\in M$ then $\mathcal O(x_0)$ is contained in $M$. Since $M$ is minimal and $\cj{\mathcal O(x_0)}$ is invariant, we get $M=\cj{\mathcal O(x_0)}$.

If $M_1, M_2$ are distinct compact minimal invariant sets, and we assume that they are not disjoint, then for $x_0\in M_1\cap M_2$ we have $M_1=\cj{\mathcal O(x_0)}=M_2$, a contradiction. 

\end{proof}


\begin{lemma}\label{lemx0}
The minimal compact invariant sets are contained in the attractor $X_0$ of the iterated function system $\{g_i\}$.
\end{lemma}

\begin{proof}
Indeed, let $M$ be a minimal compact invariant set. Let $x\in M$. Pick $\omega_1,\omega_2,\dots$ such that the transitions $x\rightarrow g_{\omega_1}(x)\rightarrow g_{\omega_1\omega_2}(x)\rightarrow\dots$ are possible. Then $g_{\omega_1\dots\omega_n}(x)\in M$, for all $n\geq 0$. Let $x_0\in X_0$. Then, if $0<c<1$ is a common contraction ratio for the maps $\{g_i\}$, we have $d(g_{\omega_1\dots\omega_n}(x),g_{\omega_1\dots\omega_n}(x_0))\leq c^n  d(x,x_0)\rightarrow 0$. Since $g_{\omega_1\dots\omega_n}(x_0)\in X_0$ and $X_0$ is compact, it follows that is has a convergent subsequence to a point $z_0$ in $X_0$. Then the same subsequence of $\{g_{\omega_1\dots\omega_n}(x)\}$ is convergent to $z_0$, which means $z_0\in M$. Then $\cj{\mathcal O(z_0)}$ is a compact invariant set contained in $M$ and in $X_0$. Since $M$ is minimal $M=\cj{\mathcal O(z_0)}\subseteq X_0$. 

Thus, all minimal compact invariant sets are contained in the compact attractor of the iterated function system $\{g_i\}$. 
\end{proof}

\begin{lemma}\label{lemdist}
There exists a constant $\delta>0$ such that for any two distinct minimal compact invariant sets $M_1$ and $M_2$, $\dist(M_1,M_2)\geq \delta$.

\end{lemma}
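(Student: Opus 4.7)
The plan is to combine the contractivity of the maps $g_i$ with the uniform continuity of the weight functions $\nu_i$ on the compact attractor $X_0$, which (as just established earlier in the proof) contains every minimal compact invariant set.

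Given two distinct minimal compact invariant sets $M_1, M_2$, I note first that $M_1 \cap M_2 = \emptyset$ by the opening remark of the proof, so $\rho := \dist(M_1, M_2) > 0$, and by compactness this distance is attained at some pair $x_1 \in M_1$, $x_2 \in M_2$ with $d(x_1, x_2) = \rho$. The central observation I would establish is that no single digit $i$ can satisfy both $\nu_i(x_1) \neq 0$ and $\nu_i(x_2) \neq 0$: were that the case, invariance of the $M_j$'s would force $g_i(x_1) \in M_1$ and $g_i(x_2) \in M_2$, and contractivity with a common ratio $c < 1$ would yield $d(g_i(x_1), g_i(x_2)) \leq c\rho < \rho$, contradicting the definition of $\rho$ as the minimum distance between the two sets.

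Next I would convert this into a quantitative bound using a common modulus of uniform continuity $\omega$ for the finitely many continuous functions $\nu_0, \dots, \nu_{N-1}$ on the compact set $X_0$. For each $i$ with $\nu_i(x_1) \neq 0$, the previous step forces $\nu_i(x_2) = 0$, so $|\nu_i(x_1)| = |\nu_i(x_1) - \nu_i(x_2)| \leq \omega(\rho)$; this bound is trivial when $\nu_i(x_1) = 0$. Squaring and summing over $i$, together with $\sum_{i=0}^{N-1} |\nu_i(x_1)|^2 = 1$ (which holds because $\|e_{x_1}\| = 1$), yields $1 \leq N \omega(\rho)^2$, i.e.\ $\omega(\rho) \geq 1/\sqrt{N}$. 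Since $\omega(t) \to 0$ as $t \to 0^+$, this forces $\rho \geq \delta$ for some $\delta > 0$ depending only on $N$ and on the modulus $\omega$, and in particular not on the specific pair $M_1, M_2$.

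The only substantive step is the contradiction argument showing that no common digit can be active at both $x_1$ and $x_2$; once that is in place, the rest is routine manipulation of the modulus of continuity on the compact attractor. I do not anticipate any serious obstacle.
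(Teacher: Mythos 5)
Your argument is correct, and it uses the same two ingredients as the paper's proof (which follows \cite{CCR96}): uniform continuity of the weights on the compact attractor $X_0$ (which contains all minimal compact invariant sets), and the combination of invariance with the common contraction ratio $c<1$; but you assemble them differently. The paper fixes $\delta$ a priori so that $d(x,y)<\delta$ forces $\bigl||\nu_i(x)|^2-|\nu_i(y)|^2\bigr|<\tfrac1N$ for all $i$, then assumes $x\in M_1$, $y\in M_2$ with $d(x,y)<\delta$, uses the pigeonhole bound $|\nu_{i_1}(x)|^2\geq\tfrac1N$ to find a digit that is active at both points, and iterates this step inductively to produce arbitrarily long common admissible words, driving $d\bigl(g_{i_1\dots i_n}(x),g_{i_1\dots i_n}(y)\bigr)\to 0$ and contradicting the positivity of the distance between the two disjoint compact sets. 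You instead work at a distance-minimizing pair $(x_1,x_2)$ (which exists by compactness), where a \emph{single} application of contraction plus invariance already shows that no digit can be active at both points; the pigeonhole identity $\sum_i|\nu_i(x_1)|^2=1$ then gives $1\leq N\,\omega(\rho)^2$ for the modulus of continuity $\omega$, hence a uniform lower bound $\rho\geq\delta$. Your version avoids the induction and the limiting argument entirely and yields the quantitative bound $\omega(\rho)\geq 1/\sqrt N$ directly; the paper's version does not need the extremal pair and transfers verbatim to settings where one prefers not to invoke attainment of the distance. Both are complete; just make sure you state (as you do) that the minimal sets lie in $X_0$, that the $\nu_i$ are continuous there, and that distinct minimal compact invariant sets are disjoint so that $\rho>0$ — all of which are established earlier in the proof of Theorem \ref{th3.5}.
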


\begin{proof}
We follow the argument in \cite{CCR96}. Since the maps $|\nu_i|^2$ are uniformly continuous on the attractor $X_0$, there exists $\delta>0$ such that, if $d(x,y)<\delta$, then $||\nu_i(x)|^2-|\nu_i(y)|^2|<\frac1N$ for all $i$. 

Suppose now, that there exist $x\in M_1$ and $y\in M_2$ such that $d(x,y)<\delta$. Since $\sum_i |\nu_i(x)|^2=1$, it follows that there exist $i_1$ such that $|\nu_{i_1}(x)|^2\geq\frac1N$. Then, since $d(x,y)<\delta$, we obtain that $|\nu_{i_1}(y)|^2>0$. This means that the transitions $x\rightarrow g_{i_1}(x)$ and $y\rightarrow g_{i_1}(y)$ are possible, so $g_{i_1}(x)\in M_1$ and $g_{i_1}(y)\in M_2$. Also $d(g_{i_1}(x),g_{i_1}(y))\leq d(x,y)<\delta$. Repeating the argument and using induction, we obtain some labels $i_1,i_2,\dots,i_n,\dots$ such that the transitions $x\rightarrow g_{i_1\dots i_n}(x)$ and $y\rightarrow g_{i_1\dots i_n}(y)$ are possible for all $n$. So $g_{i_1\dots i_n}(x)\in M_1$ and $g_{i_1\dots i_n}(y)\in M_2$. But, using the same argument as in Lemma \ref{lemdist}, $d(g_{i_1\dots i_n}(x), g_{i_1\dots i_n}(y))$ converges to 0, which would mean that $\dist(M_1,M_2)=0$ and this contradicts the fact that $M_1$ and $M_2$ are distinct minimal compact invariant sets. 
\end{proof}

Using Lemma \ref{lemdist} and Lemma \ref{lemx0}, all minimal compact invariant sets are cpntained in the compact attractor $X_0$ and they have a distance bigger than $\delta$ between any two of them. Thus, there can be only a finite number of minimal compact invariant sets.

We start with a simple relation that we will use often. For a word $\omega\in\Omega$ and $t\in\mathcal T$,
$$S_{\omega}^*e_t=V_\omega^*e_t=\nu_\omega(t)e_{g_{\omega}(t)}.$$
We have, for $t\in M_j$ and $i\in\{0,\dots,N-1\}$,
 $V_i^*e_t=\nu_i(t)e_{g_i(t)}\in K(M_j)$, if the transition $t\rightarrow g_i(t)$ is possible, and $V_i^*e_t=0\in K(M_j)$ if the same transition is not possible. So $K(M_j)$ is invariant for $V_i^*$.

The next lemma shows that the spaces $H(M_i)$ are invariant for the Cuntz representation $\{S_j\}$. 
\begin{lemma}\label{lem3.5}
If $K_0$ is a subspace of $K$ which is invariant under the operators $V_i^*$, $i=0,\dots, N-1$, then
$$H(K_0):=\Span\{S_\omega v : v\in K_0, \omega\in\Omega\},$$
is invariant for the representation $(S_i)_{i=0}^{N-1}$.
\end{lemma}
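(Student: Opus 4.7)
The plan is to verify the two invariance properties separately: that $H(K_0)$ is invariant under every $S_j$ and invariant under every $S_j^*$. Since $H(K_0)$ is defined as a closed span, it suffices to check these on the spanning vectors $S_\omega v$ with $v \in K_0$ and $\omega \in \Omega$.

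For invariance under $S_j$, the argument is immediate: $S_j \cdot S_\omega v = S_{j\omega} v$, which is of the same form and so lies in $H(K_0)$. This handles one half with no difficulty.

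The substantive half is invariance under $S_j^*$. For $\omega = \omega_1\dots\omega_n$ with $n \geq 1$, the Cuntz relations $S_j^* S_i = \delta_{j,i} I$ give
$$S_j^* S_\omega v = \delta_{j,\omega_1} S_{\omega_2\dots\omega_n} v,$$
which is either zero or of the spanning form, hence in $H(K_0)$. The case $\omega = \ty$ requires showing $S_j^* v \in H(K_0)$ for $v \in K_0$, and this is where the hypothesis that $K_0$ is invariant under $V_i^*$ must be used. The key observation is the identity from Theorem \ref{dil}: $V_i^* p = S_i^* p$, where $p$ is the projection onto $K$. Since $K_0 \subset K$, we have $pv = v$ for $v \in K_0$, and thus
$$S_j^* v = S_j^* p v = V_j^* p v = V_j^* v,$$
which belongs to $K_0$ by hypothesis, hence to $H(K_0)$ (corresponding to $\omega = \ty$ in the span).

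I expect no real obstacle here; the only subtlety is remembering to invoke the identification $S_i^*|_K = V_i^*$ coming from the Cuntz dilation, which is precisely what bridges the hypothesis (invariance of $K_0$ under the row co-isometry) to the conclusion (invariance of $H(K_0)$ under the Cuntz representation). After verifying invariance of the spanning set under $S_j$ and $S_j^*$ for each $j$, passage to the closed linear span is automatic by continuity of these bounded operators, completing the proof.
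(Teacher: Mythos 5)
Your proof is correct and follows essentially the same route as the paper: invariance under $S_j$ is immediate from $S_jS_\omega v=S_{j\omega}v$, invariance under $S_j^*$ splits into the nonempty-word case handled by the Cuntz relation $S_j^*S_{\omega_1}=\delta_{j,\omega_1}I$ and the empty-word case handled by the dilation identity $S_j^*|_K=V_j^*$ together with the hypothesis $V_j^*K_0\subset K_0$. Nothing to add.
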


\begin{proof} Clearly, the space is invariant under the operators $S_i$, $i=0,1,\dots,N-1$.
For a nonempty word $\omega=\omega_1\dots \omega_n$, $v\in K_0$ and $i\in\{0,\dots,N-1\}$ we have $S_i^*(S_\omega v)=\delta_{i,\omega_1}S_{\omega_2}\dots S_{\omega_n}v\in H(K_0)$. Also $S_i^*v=V_i^*v\in K_0\subseteq H(K_0)$.
\end{proof}

Next, we prove that the spaces $K(M_j)$ are mutually orthogonal.

\begin{lemma}\label{lem3.6}
If $M_1$ and $M_2$ are two closed invariant subsets with $\textup{dist}(M_1,M_2)>0$ then
$$\ip{e_{t_1}}{e_{t_2}}=0,\quad (t_1\in M_1, t_2\in M_2).$$
\end{lemma}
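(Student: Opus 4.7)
The plan is to expand $\langle e_{t_1}, e_{t_2}\rangle$ using the row co-isometry relation, iterate it to push the evaluation points deep into the contracted images under words, and then exploit the fact that the two invariant sets stay a uniform distance apart.

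First I would use $\sum_{i=0}^{N-1} V_i V_i^* = I_K$ together with the formula $V_i^* e_t = \nu_i(t) e_{g_i(t)}$ to write
\begin{equation*}
\langle e_{t_1}, e_{t_2}\rangle = \sum_{i=0}^{N-1}\langle V_i^* e_{t_1}, V_i^* e_{t_2}\rangle = \sum_{i=0}^{N-1}\nu_i(t_1)\overline{\nu_i(t_2)}\,\langle e_{g_i(t_1)}, e_{g_i(t_2)}\rangle.
\end{equation*}
Iterating this identity $n$ times yields
\begin{equation*}
\langle e_{t_1}, e_{t_2}\rangle = \sum_{|\omega|=n} \nu_\omega(t_1)\overline{\nu_\omega(t_2)}\,\langle e_{g_\omega(t_1)}, e_{g_\omega(t_2)}\rangle
\end{equation*}
for every $n\geq 1$.

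Next I would analyze which terms can be nonzero. If $\nu_\omega(t_j) \neq 0$, then every intermediate transition along $\omega$ starting from $t_j$ is possible, so by the invariance of $M_j$ and a straightforward induction on $|\omega|$, we get $g_\omega(t_j) \in M_j$. Consequently, any nonzero term in the sum has both $g_\omega(t_1) \in M_1$ and $g_\omega(t_2) \in M_2$, forcing
\begin{equation*}
d(g_\omega(t_1), g_\omega(t_2)) \geq \dist(M_1, M_2) = \delta > 0.
\end{equation*}

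Finally, I would bring in the contractivity assumption \eqref{eqas3}: let $c \in (0,1)$ be a common contraction constant for all $g_i$. Then $d(g_\omega(t_1), g_\omega(t_2)) \leq c^{n} d(t_1, t_2)$ for every word $\omega$ of length $n$. Pick $n$ so large that $c^n d(t_1,t_2) < \delta$. Then no word $\omega$ of that length can simultaneously satisfy $\nu_\omega(t_1)\neq 0$ and $\nu_\omega(t_2)\neq 0$, so every term in the sum vanishes, giving $\langle e_{t_1}, e_{t_2}\rangle = 0$. The proof is short and the only potentially delicate point is the invariance-plus-induction step ensuring $g_\omega(t_j) \in M_j$ whenever the weight is nonzero, which is exactly what lets the contraction argument do its work.
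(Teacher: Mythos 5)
Your proof is correct and follows essentially the same route as the paper's: iterate the co-isometry relation to write $\ip{e_{t_1}}{e_{t_2}}$ as a sum over words of length $n$, observe that invariance forces $g_\omega(t_j)\in M_j$ whenever $\nu_\omega(t_j)\neq 0$, and use the contraction ratio to kill every term once $c^n d(t_1,t_2)<\dist(M_1,M_2)$. No gaps.
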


\begin{proof}
Let $\omega$ be a word and assume that the transitions $t_1\rightarrow g_\omega(t_1)$ and $t_2\rightarrow g_\omega(t_2)$ are possible in several steps. Then
$g_\omega(t_1)\in M_1$ and $g_\omega(t_2)\in M_2$. Also, since the maps $g_i$ are contractions with ratio $0<c<1$, if $n=|\omega|$ then
$$\textup{dist}(g_\omega(t_1), g_\omega(t_2))\leq c^n \textup{dist}(t_1,t_2),$$
so, if $n$ is large enough then
$$\textup{dist}(g_\omega(t_1), g_\omega(t_2))<\textup{dist}(M_1,M_2),$$
and this would yield a contradiction. Thus, for $\omega$ long enough, one of the transitions $t_1\rightarrow g_\omega(t_1)$ or $t_2\rightarrow g_\omega(t_2)$ is not possible, so $\nu_\omega(t_1)=0$ or $\nu_\omega(t_2)=0$.

Using this, with $n$ large enough, and the co-isometry relation, we have
$$\ip{e_{t_1}}{e_{t_2}}=\sum_{|\omega|=n}\ip{S_\omega^*e_{t_1}}{S_\omega^*e_{t_2}}=\sum_{|\omega|=n}\nu_\omega(t_1)\cj \nu_\omega(t_2)\ip{e_{g_\omega(t_1)}}{e_{g_\omega(t_2)}}=0.$$
\end{proof}

\begin{lemma}\label{lem3.7}
Assume that the subspaces $K_1$ and $K_2$ are invariant for the operators $V_i^*$, $i=0,\dots,N-1$, and are mutually orthogonal, and let $H(K_1)$ and $H(K_2)$ be the subspaces of $H$ as in Lemma \ref{lem3.5}. Then $H(K_1)$ and $H(K_2)$ are orthogonal.
\end{lemma}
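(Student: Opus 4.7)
The plan is to reduce the claim about orthogonality of the spans $H(K_1)$ and $H(K_2)$ to orthogonality of pairs $S_{\omega_1}v_1$ and $S_{\omega_2}v_2$ with $v_j\in K_j$, and then to use the Cuntz isometry relations to collapse one of the words, so that the problem comes down to showing $S_\beta^\ast K_1\subseteq K_1$ for every word $\beta$. The orthogonality $K_1\perp K_2$ will then close the argument.

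More precisely, I would first observe that it suffices to show $\ip{S_{\omega_1}v_1}{S_{\omega_2}v_2}=0$ for all $v_j\in K_j$, $\omega_j\in\Omega$. By Lemma \ref{lemo}, if this inner product is nonzero, one word must be a prefix of the other, so without loss of generality $\omega_2=\omega_1\beta$ for some $\beta\in\Omega$. Then using $S_{\omega_1}^\ast S_{\omega_1}=I$ (the second Cuntz relation), we get
$$\ip{S_{\omega_1}v_1}{S_{\omega_2}v_2}=\ip{S_{\omega_1}v_1}{S_{\omega_1}S_\beta v_2}=\ip{v_1}{S_\beta v_2}=\ip{S_\beta^\ast v_1}{v_2}.$$

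The key step is then to verify by induction on $|\beta|$ that $S_\beta^\ast v_1\in K_1$ whenever $v_1\in K_1$. For $|\beta|=1$, the relation \eqref{e1} from Theorem \ref{dil} gives $S_i^\ast p=V_i^\ast p$, so for $v_1\in K_1\subseteq K$ we have $S_i^\ast v_1=V_i^\ast v_1$, which lies in $K_1$ by the hypothesis that $K_1$ is invariant under each $V_i^\ast$. The inductive step follows immediately by applying this to the successively shorter words obtained from $\beta$.

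Once $S_\beta^\ast v_1\in K_1$ is established, the hypothesis $K_1\perp K_2$ yields $\ip{S_\beta^\ast v_1}{v_2}=0$, finishing the proof. I do not foresee a real obstacle here; the only subtlety is making sure to pull $S_{\omega_1}$ off using the Cuntz relation in the correct order and to invoke \eqref{e1} rather than some other identity for computing $S_i^\ast$ on vectors of $K$.
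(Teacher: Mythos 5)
Your proposal is correct and follows essentially the same route as the paper: split into the non-prefix case (orthogonal ranges, via Lemma \ref{lemo}) and the prefix case, cancel the common initial word with $S_{\omega}^{*}S_{\omega}=I$, and reduce to $S_{\beta}^{*}$ mapping one of the $K_j$ into itself (via \eqref{e1} and the $V_i^{*}$-invariance), whence orthogonality of $K_1$ and $K_2$ finishes the argument. The only cosmetic difference is that the paper moves $S_\beta^{*}$ onto the $K_2$ vector while you move it onto the $K_1$ vector, which is immaterial since the hypotheses are symmetric.
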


\begin{proof}
Given two words $\omega=\omega_1\dots\omega_n$ and $\omega'=\omega_1'\dots\omega_m'$, if there exists $i$ such that $\omega_i\neq \omega_i'$ then, take the first such $i$ and, for $v_1\in K_1$ and $v_2\in K_2$:
$$\ip{S_\omega v_1}{S_{\omega'}v_2}=\ip{S_{\omega_i}S_{\omega_{i+1}}\dots S_{\omega_n}v_1}{S_{\omega_i'}S_{\omega_{i+1}'}\dots S_{\omega_m'}v_2}=0,$$
since the ranges of $S_{\omega_i}$ and $S_{\omega_i'}$ are orthogonal.

In the remaining case, $\omega'$ is a prefix of $\omega$ (or vice-versa), $\omega=\omega'\beta$ for some word $\beta_1\dots\beta_p$, then
$$\ip{S_\omega v_1}{S_{\omega'}v_2}=\ip{S_\beta v_1}{v_2}=\ip{v_1}{S_\beta^*v_2}=0,$$
because $S_\beta^*v_2\in K_2$.
\end{proof}

Using Lemma \ref{lem3.6} and Lemma \ref{lem3.7}, we get the next lemma:

\begin{lemma}\label{lem4.6.1}
If $M_1$ and $M_2$ are two closed invariant subsets with $\textup{dist}(M_1,M_2)>0$ then the  subspaces $K(M_1)$ and $K(M_2)$ are orthogonal and the subspaces $H(M_1)$ and $H(M_2)$ are orthogonal.
\end{lemma}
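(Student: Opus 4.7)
The plan is to deduce this lemma as an essentially immediate corollary of Lemmas \ref{lem3.6} and \ref{lem3.7}. The first step is to verify that each $K(M_j)$ is invariant for every $V_i^*$: for any $t\in M_j$ and any $i$, relation \eqref{eqas2} gives $V_i^*e_t=\nu_i(t)e_{g_i(t)}$, which is either $0$ (when the transition $t\to g_i(t)$ is not possible, i.e.\ $\nu_i(t)=0$) or a scalar multiple of $e_{g_i(t)}$ with $g_i(t)\in M_j$ by invariance of $M_j$. Either way the output lies in $K(M_j)$, and so by linearity and continuity $V_i^*$ leaves $K(M_j)$ invariant.

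With invariance in hand, I would apply Lemma \ref{lem3.6} directly to $M_1$ and $M_2$ (whose distance is positive by hypothesis) to obtain $\ip{e_{t_1}}{e_{t_2}}=0$ for every $t_1\in M_1$ and $t_2\in M_2$. Since, by definition, $K(M_j)=\Span\{e_t:t\in M_j\}$, this pointwise orthogonality extends by sesquilinearity and continuity of the inner product to orthogonality of the two subspaces $K(M_1)$ and $K(M_2)$.

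Finally, setting $K_1:=K(M_1)$ and $K_2:=K(M_2)$, both of which are now known to be $V_i^*$-invariant and mutually orthogonal, Lemma \ref{lem3.7} produces the orthogonality of $H(K_1)$ and $H(K_2)$. The last bookkeeping step is to identify these with $H(M_1)$ and $H(M_2)$: since $\{e_t:t\in M_j\}$ is total in $K(M_j)$ and each $S_\omega$ is bounded, one has $\Span\{S_\omega e_t:t\in M_j,\omega\in\Omega\}=\Span\{S_\omega v:v\in K(M_j),\omega\in\Omega\}$, which matches the definitions of $H(M_j)$ and $H(K_j)$. There is no real obstacle here; the entire content of the lemma is packaged into the two preceding lemmas, and the argument is a straightforward assembly.
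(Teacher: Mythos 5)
Your proposal is correct and follows exactly the paper's route: the paper derives this lemma directly from Lemma \ref{lem3.6} (orthogonality of the vectors $e_{t_1}$, $e_{t_2}$, hence of $K(M_1)$ and $K(M_2)$) and Lemma \ref{lem3.7} (orthogonality of $H(K_1)$ and $H(K_2)$ for $V_i^*$-invariant, mutually orthogonal $K_1,K_2$), with the $V_i^*$-invariance of $K(M_j)$ checked beforehand just as you do. The extra details you supply (linearity/continuity of the inner product, the identification $H(K(M_j))=H(M_j)$) are exactly the routine steps the paper leaves implicit.
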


Thus, the spaces $H(M_j)$ are mutually orthogonal.

To prove the irreducibility of the Cuntz representation on $H(M_j)$, the disjointness and the fact that the sum of these spaces is $H$, we will introduce the Ruelle operator:

\begin{definition}\label{defas1}
 Define the Ruelle transfer operator for functions $f:\mathcal T\rightarrow\bc$ by
\begin{equation}
Rf(t)=\sum_{i=0}^{N-1}|\nu_i(t)|^2f(g_i(t)),\quad (t\in\mathcal T).
\label{eqruelle}
\end{equation}

Recall that $\mathcal B(K)^\sigma$ is the space of bounded operators $T$ on $K$ with 
$$\sum_{i=0}^{N-1}V_iTV_i^*=T.$$
By Theorem \ref{dil}, the commutant of the representation $(H,S_i)_{i=0}^{N-1}$ is in bijective correspondence with $\mathcal B(K)^\sigma$ by the map $A\mapsto P_KAP_K$. 

Define the map $A\mapsto h_A$, from $\mathcal B(K)^\sigma$ to complex valued functions defined on $\mathcal T$, by
\begin{equation}
h_A(t)=\ip{Ae_t}{e_t},\quad (t\in\mathcal T).
\label{eqha}
\end{equation}

For an operator $A$ in the commutant of the representation $(H,S_i)_{i=0}^{N-1}$, the operator $P_KAP_K$ is in $\mathcal B(K)^\sigma$, and we define $h_A=h_{P_KAP_K}$.

\end{definition}

\begin{lemma}\label{las1}

The function $\mathcal B(K)^\sigma\ni A\mapsto h_A$, maps into the continuous fixed points of the Ruelle transfer operator, is linear, and order preserving. Also, $R1=1$.

\end{lemma}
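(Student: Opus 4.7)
The plan is to verify each of the four claims in turn: continuity and $R$-invariance of $h_A$, linearity and order-preservation of the assignment, and the identity $R1 = 1$. All four will be essentially direct computations using Assumptions \ref{as1} and the defining relation of $\mathcal{B}(K)^\sigma$, so I do not expect any serious obstacle.

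First I would handle continuity. By assumption \eqref{eqas1}, the map $t \mapsto e_t$ is continuous from $\mathcal{T}$ into $K$, and $A$ is bounded, so $t \mapsto A e_t$ is continuous. Since the inner product is jointly continuous, $h_A(t) = \ip{Ae_t}{e_t}$ is continuous on $\mathcal{T}$. Linearity in $A$ is immediate from the definition. For order-preservation: if $A \geq 0$, then $h_A(t) = \ip{Ae_t}{e_t} \geq 0$ for every $t$. The identity $R1 = 1$ follows from the row co-isometry condition applied pointwise at $e_t$, which (as already computed in the paragraph after the Assumptions) gives $\sum_i |\nu_i(t)|^2 = 1$.

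The key step is showing that $Rh_A = h_A$ for $A \in \mathcal{B}(K)^\sigma$. Using \eqref{eqas2}, namely $V_i^* e_t = \nu_i(t) e_{g_i(t)}$, I would compute
\begin{align*}
Rh_A(t) &= \sum_{i=0}^{N-1} |\nu_i(t)|^2 \ip{A e_{g_i(t)}}{e_{g_i(t)}} \\
&= \sum_{i=0}^{N-1} \ip{A \nu_i(t) e_{g_i(t)}}{\nu_i(t) e_{g_i(t)}} \\
&= \sum_{i=0}^{N-1} \ip{A V_i^* e_t}{V_i^* e_t} \\
&= \ip{\left(\sum_{i=0}^{N-1} V_i A V_i^*\right) e_t}{e_t} = \ip{A e_t}{e_t} = h_A(t),
\end{align*}
where the last line uses exactly that $A \in \mathcal{B}(K)^\sigma$, i.e., $\sum_i V_i A V_i^* = A$.

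Finally, for the extension to operators $A$ in the commutant of $(H, S_i)_{i=0}^{N-1}$ via $h_A := h_{P_K A P_K}$, nothing new is needed: Theorem \ref{dil} guarantees $P_K A P_K \in \mathcal{B}(K)^\sigma$, so the previous argument applies verbatim. The only mild point to flag is that the continuity of $\nu_i$ (noted in the proof of Theorem \ref{th3.5}) is what ensures $Rh_A$ is literally a continuous function, so the map lands in $C(\mathcal{T})$ and not merely in bounded functions.
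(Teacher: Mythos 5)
Your proof is correct and follows essentially the same route as the paper: the fixed-point identity $Rh_A = h_A$ is the same chain of equalities (the paper just writes it starting from $\ip{Ae_t}{e_t}=\sum_i\ip{AV_i^*e_t}{V_i^*e_t}$ rather than from $Rh_A(t)$), and continuity, linearity, order-preservation, and $R1=1$ are handled the same way. Your final remark about needing continuity of the $\nu_i$ is actually superfluous here, since the claim is only that $h_A$ itself is a continuous fixed point, and continuity of $h_A$ already follows from the continuity of $t\mapsto e_t$.
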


\begin{proof}
It is clear that, for $A\in\mathcal B(K)^\sigma$, the function $h_A$ is continuous. Also, the map $A\mapsto h_A$ is clearly linear and order preserving. We check that $h_A$ is a fixed point for $R$. We have, using the assumptions \eqref{eqas1}-\eqref{eqas2},
$$h_A(t)=\ip{Ae_t}{e_t}=\sum_{i=0}^{N-1}\ip{AV_i^*e_t}{V_i^*e_t}=\sum_{i=0}^{N-1}|\nu_i(t)|^2\ip{Ae_{g_i(t)}}{e_{g_i(t)}}$$$$=\sum_{i=0}^{N-1}|\nu_i(t)|^2h_A(g_i(t))=(Rh_A)(t).$$
So $h_A$ is a fixed point for $R$. In particular, taking $A$ to be the identity, since $h_I(t)=\|e_t\|^2=1$ for all $t$, it follows that $R1=1$.

\end{proof}

\begin{lemma}\label{lem4.7.1} 
Let $t_1,t_2\in\mathcal T$ and $\epsilon>0$. Then there exists $n\in\bn$ such that if $\omega\in \Omega$ has length $|\omega|\geq n$, then $$\|e_{g_\omega(t_1)}-e_{g_\omega(t_2)}\|<\epsilon.$$

\end{lemma}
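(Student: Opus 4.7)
The approach is a routine uniform continuity argument: the compositions $g_\omega$ contract distances at rate $c^{|\omega|}$ (where $c<1$ is a common contraction constant for $g_0,\dots,g_{N-1}$, which exists since there are finitely many), so the two points $g_\omega t_1$ and $g_\omega t_2$ get arbitrarily close, uniformly in $\omega$. The only thing to be careful about is that continuity of $e$ on the complete metric space $\mathcal T$ is \emph{pointwise}, not uniform, so we need to confine all the iterates to a fixed compact set and invoke uniform continuity there.

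Concretely, I would first establish that the set
$$\mathcal F:=\overline{\{g_\omega(t_j):\omega\in\Omega,\ j=1,2\}}$$
is compact in $\mathcal T$. This is proved by exactly the argument used for Lemma \ref{lem4.11-0}: fix any $x_0$ in the attractor $X_0$ of the IFS $\{g_i\}$, and note that for any $\omega$ one has $g_\omega(x_0)\in X_0$ together with $d(g_\omega(t_j),g_\omega(x_0))\leq c^{|\omega|}d(t_j,x_0)$. Given a sequence $(g_{\omega_n}(t_{j_n}))_n$, either infinitely many $\omega_n$ share a common length (yielding a convergent subsequence from the finitely many such iterates), or $|\omega_n|\to\infty$, in which case the contraction estimate together with compactness of $X_0$ produces a convergent subsequence.

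Once $\mathcal F$ is known to be compact, the continuous function $e\colon\mathcal T\to K$ is uniformly continuous on $\mathcal F$. Given $\epsilon>0$, choose $\delta>0$ so that $s,s'\in\mathcal F$ and $d(s,s')<\delta$ imply $\|e_s-e_{s'}\|<\epsilon$. Choose $n\in\bn$ large enough that $c^n d(t_1,t_2)<\delta$. Then for any word $\omega$ with $|\omega|\geq n$ we have
$$d(g_\omega(t_1),g_\omega(t_2))\leq c^{|\omega|}d(t_1,t_2)\leq c^n d(t_1,t_2)<\delta,$$
and both points lie in $\mathcal F$, so $\|e_{g_\omega t_1}-e_{g_\omega t_2}\|<\epsilon$, as required.

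The only real obstacle is step one, namely the compactness of $\mathcal F$: since the proof must work for \emph{every} word $\omega$ (including ones along which the transitions are not possible), one cannot directly quote Lemma \ref{lem4.11-0}, which is stated for the orbit under possible transitions. However, the compactness proof in that lemma uses only that the $g_i$ are contractions with a common attractor, so the argument transfers verbatim to the full set of iterates.
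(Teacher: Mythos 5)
Your proof is correct and rests on the same two ingredients as the paper's: the uniform contraction estimate $d(g_\omega t_1,g_\omega t_2)\le c^{|\omega|}d(t_1,t_2)$ and an upgrade of the pointwise continuity of $t\mapsto e_t$ to a uniform statement via compactness of the attractor $X_0$. The only (cosmetic) difference is that the paper proves uniform continuity of $e$ on a $\delta$-neighborhood of $X_0$ and compares both iterates to $e_{g_\omega x_0}$ with $x_0\in X_0$ via an $\epsilon/2$ triangle inequality, whereas you first show the closure of the full iterate set is compact and then compare the two iterates directly; both routes are sound.
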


\begin{proof}
Let $X_0$ be the compact attractor of the iterated function system $(g_i)_{i=0}^{N-1}$. We claim that there is a $\delta>0$ such that, if $x_0\in X_0$, $t\in\mathcal T$ and $\dist(x_0,t)<\delta$, then $\|e_{x_0}-e_t\|<\epsilon/2$.

Suppose not. Then there exists a sequence $\{x_n\}$ in $X_0$ and a sequence $\{t_n\}$ in $\mathcal T$, such that $\dist(x_n,t_n)<1/n$ and $\|e_{x_n}-e_{t_n}\|\geq \epsilon/2$. Since $X_0$ is compact, by passing to a subsequence, we may assume that $\{x_n\}$ converges to some $x_0$ in $X_0$. Since $\dist(x_n,t_n)$ converges to 0, we get that $t_n$ also converges to $x_0$. Since the map $t\mapsto e_t$ is continuous, we get that both $e_{x_n}$ and $e_{t_n}$ converge to $e_{x_0}$, a contradiction.

Now, take some point in $x_0$ in $X_0$. Since the maps $g_i$ are contractions, with some contraction ratio $0\leq c<1$, if we take $n$ large enough and $\omega$ in $\Omega$ with length $|\omega|=n$, we have, for $i=1,2$:
$$\dist(g_\omega (t_i),g_\omega (x_0))\leq c^n d(t_i,x_0)<\delta,$$
and since $g_\omega (x_0)$ is in $X_0$, we get that 
$\|e_{g_\omega (t_i)}-e_{g_\omega (x_0)}\|<\epsilon/2$, which implies that $\|e_{g_\omega (t_1)}-e_{g_\omega (t_2)}\|<\epsilon$.

\end{proof}

\begin{lemma}\label{lem4.8}
Let $A$ be in the commutant of $\{S_i\}$. If $h_A=0$ then $A=0$. 
\end{lemma}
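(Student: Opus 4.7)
I plan to argue in two stages: first reduce to showing $P_K A P_K = 0$, i.e., $\ip{A e_s}{e_t} = 0$ for all $s,t \in \mathcal{T}$; then use cyclicity of $K$ and the fact that $A$ commutes with both $S_i$ and $S_i^*$ to upgrade this to $A = 0$ on $H$.

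For the first stage, I fix $s,t \in \mathcal{T}$, insert $\sum_{|\omega|=n}S_\omega S_\omega^* = I$, and use $A S_\omega = S_\omega A$ together with the identity $S_\omega^* e_r = \nu_\omega(r) e_{g_\omega(r)}$ for $r \in \mathcal{T}$ (valid by Theorem \ref{dil} and \eqref{eqas2}) to obtain
$$\ip{A e_s}{e_t} = \sum_{|\omega|=n}\ip{A S_\omega^* e_s}{S_\omega^* e_t} = \sum_{|\omega|=n}\nu_\omega(s)\overline{\nu_\omega(t)}\ip{A e_{g_\omega(s)}}{e_{g_\omega(t)}}.$$
Because $h_A \equiv 0$, the diagonal term $\ip{A e_{g_\omega(s)}}{e_{g_\omega(s)}}$ vanishes, so $|\ip{A e_{g_\omega(s)}}{e_{g_\omega(t)}}| \leq \|A\|\,\|e_{g_\omega(s)} - e_{g_\omega(t)}\|$. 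Given $\epsilon > 0$, Lemma \ref{lem4.7.1} supplies an $n$ large enough that $\|e_{g_\omega(s)} - e_{g_\omega(t)}\| < \epsilon$ uniformly in $\omega$ with $|\omega| = n$. Cauchy--Schwarz combined with the iterated identity $\sum_{|\omega|=n}|\nu_\omega(r)|^2 = 1$ then yields $|\ip{A e_s}{e_t}| \leq \|A\|\,\epsilon$, whence $\ip{A e_s}{e_t} = 0$. Since $\{e_t\}$ spans $K$ by \eqref{eqas1}, this gives $\ip{Av}{w} = 0$ for all $v,w \in K$.

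For the second stage, cyclicity of $K$ reduces the task to showing $\ip{A S_\omega v}{S_{\omega'}w} = 0$ for $v,w \in K$ and $\omega,\omega' \in \Omega$. When $\omega,\omega'$ have no prefix relation, $S_{\omega'}^* S_\omega = 0$ by the Cuntz relations and the inner product is zero. When $\omega = \omega'\beta$, the relation $S_{\omega'}^* S_{\omega'} = I$ gives $\ip{A S_\omega v}{S_{\omega'}w} = \ip{A S_\beta v}{w} = \ip{A v}{S_\beta^* w}$, which vanishes by the first stage because $K$ is $S_i^*$-invariant, hence $S_\beta^* w \in K$. The symmetric case $\omega' = \omega\beta$ is handled by the same manipulations with the roles reversed, producing $\ip{A S_\beta^* v}{w}$, again zero. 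Thus $A$ vanishes on a dense subspace of $H$ and so $A = 0$.

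The main obstacle is the first stage: passing from vanishing of the quadratic form $t \mapsto \ip{A e_t}{e_t}$ to vanishing of the full sesquilinear form on $K \times K$. The key is to let the contractive dynamics $\{g_i\}$, via Lemma \ref{lem4.7.1}, force each off-diagonal inner product $\ip{A e_{g_\omega(s)}}{e_{g_\omega(t)}}$ arbitrarily close to the (vanishing) diagonal, while Cauchy--Schwarz uniformly controls the weighted sum of the $\nu_\omega$'s.
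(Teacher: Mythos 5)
Your proof is correct and follows essentially the same route as the paper: insert the level-$n$ Cuntz resolution $\sum_{|\omega|=n}S_\omega S_\omega^*=I$, use $S_\omega^*e_r=\nu_\omega(r)e_{g_\omega(r)}$, exploit $h_A=0$ at the contracted points together with Lemma \ref{lem4.7.1}, and finish with Cauchy--Schwarz and $\sum_{|\omega|=n}|\nu_\omega(r)|^2=1$. Your second stage merely spells out, via cyclicity of $K$ and $S_i^*$-invariance of $K$, the paper's terse final step that $A=0$ on $K$ forces $A=0$ on $H$ (which the paper could also justify by the isometric isomorphism of Theorem \ref{dil}), so there is nothing to correct.
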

%
%
%
%

\begin{proof}
We have, for $t_1,t_2\in \mathcal T$
$$\ip{Ae_{t_1}}{e_{t_2}}=\sum_{|\omega|=n}\ip{S_\omega^*Ae_{t_1}}{S_\omega^*e_{t_2}}=\sum_{|\omega|=n}\nu_\omega(t_1)\cj\nu_\omega(t_2)\ip{Ae_{g_\omega (t_1)}}{e_{g_\omega (t_2)}}.$$
Suppose $A\neq 0$. By Lemma \ref{lem4.7.1}, for a given $\epsilon>0$, for $\omega$ long enough, 
 $\|     e_{{ g_\omega (t_1)  }} -   e_{{ g_\omega (t_2)  }}     \|<\frac{\epsilon}{\|A\|}$. Using $h_A (g_\omega(t_2))=0$ we have:
\begin{eqnarray*}
|\ip{Ae_{g_\omega (t_1)}}{e_{g_\omega (t_2)}}|= |\ip{Ae_{g_\omega (t_1)} -  Ae_{g_\omega (t_2)} }{e_{g_\omega (t_2)}}|\leq \|A\|\|e_{g_\omega (t_1)}-e_{g_\omega (t_2)}\|\|e_{g_\omega (t_2)}\|<\epsilon
\end{eqnarray*}
 Then, using the Cauchy-Schwarz inequality
$$|\ip{Ae_{t_1}}{e_{t_2}}|\leq \left(\sum_{|\omega|=n}|\nu_\omega(t_1)|^2\right)^{\frac12}\left(\sum_{|\omega|=n}|\nu_\omega(t_2)|^2\epsilon^2\right)^{\frac12}=\epsilon.$$

Since $\epsilon$ was arbitrary, we get $\ip{Ae_{t_1}}{e_{t_2}}=0$ for all $t_1,t_2$, and since these vectors span the entire space, we get $A=0$ on $K$ and therefore $A=0$ on $H$.
\end{proof}

\begin{lemma}\label{lem4.9}
Let $F$ be a compact invariant set and let $h$ be a real valued continuous fixed point of the Ruelle operator. Then the sets
$$S:=\{x_0\in F : h(x_0)=\sup_{x\in F} h(x)\} \mbox{ and } I:=\{x_0\in F : h(x_0)=\inf_{x\in F}h(x)\}$$
are compact and invariant.
\end{lemma}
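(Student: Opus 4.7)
The plan is to exploit the averaging nature of the Ruelle operator $R$, together with the fact that $\sum_{i=0}^{N-1}|\nu_i(t)|^2=1$ (so $Rh(t)$ is a convex combination of the values $h(g_i(t))$), and combine this with the invariance of $F$ so that only points inside $F$ appear in the averaging. I will carry out the argument for $S$; the argument for $I$ is identical after replacing $\sup$ by $\inf$ (or by applying the $S$-argument to $-h$, which is also a continuous real-valued fixed point of $R$).

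First I would note that $F$ is compact and $h$ is continuous, so $M:=\sup_{x\in F}h(x)$ is attained, and hence $S$ is nonempty. Let $x_0\in S$, so $h(x_0)=M$. Since $h$ is a fixed point of the Ruelle operator,
$$M=h(x_0)=Rh(x_0)=\sum_{i=0}^{N-1}|\nu_i(x_0)|^2\,h(g_i(x_0)).$$
Partition the indices into those with $|\nu_i(x_0)|^2>0$ (the possible transitions from $x_0$) and those with $|\nu_i(x_0)|^2=0$ (which contribute nothing to the sum). For each index $i$ in the first set, the invariance of $F$ forces $g_i(x_0)\in F$, and therefore $h(g_i(x_0))\leq M$.

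Now combine: we have a convex combination $\sum_i p_i\,a_i=M$ where $p_i=|\nu_i(x_0)|^2$ (summing to $1$), $a_i=h(g_i(x_0))\leq M$ for each $i$ with $p_i>0$. This forces $a_i=M$ for every such $i$. In other words, $h(g_i(x_0))=M=\sup_{x\in F}h(x)$ for every $i$ for which the transition $x_0\rightarrow g_i(x_0)$ is possible, and $g_i(x_0)\in F$ by invariance of $F$. Hence $g_i(x_0)\in S$ whenever the transition from $x_0$ through $i$ is possible, which is exactly the definition of $S$ being invariant. The same reasoning applied to $-h$ (or to $\inf$ in place of $\sup$, reversing the inequality) gives the invariance of $I$.

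There is no real obstacle here; the only subtle point is the bookkeeping around the definition of ``possible transition'', namely that invariance only requires closure under $g_i$ when $\nu_i(x_0)\neq 0$, which is precisely the set of indices that contribute nontrivially to the Ruelle sum. Compactness of $F$ is used only to guarantee that the sup and inf are attained so that $S$ and $I$ are nonempty.
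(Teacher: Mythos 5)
Your proof is correct and follows essentially the same route as the paper's: both arguments use that $Rh(x_0)$ is a convex combination (with weights $|\nu_i(x_0)|^2$ summing to $1$) of values $h(g_i(x_0))$, each bounded above by $h(x_0)$ for the possible transitions thanks to the invariance of $F$, forcing equality term by term. No gaps; the remark that impossible transitions contribute nothing and that the case of $I$ follows by applying the argument to $-h$ matches the paper's ``similarly for $I$.''
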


\begin{proof}
Let $x_0\in S$. We have that, if the transition $x_0\rightarrow g_i(x_0)$ is possible, then $g_i(x_0)\in F$ and $h(g_i(x_0))\leq \sup_{x\in F}h(x)= h(x_0)$. If the transition $x_0\rightarrow g_i(x_0)$ is not possible, then $\nu_i(x_0)=0$. Therefore
$$h(x_0)=\sum_{i=0}^{N-1}|\nu_i(x_0)|^2h(g_i(x_0))=\sum_{i,i\rightarrow g_i(x_0) \mbox{ possible}}|\nu_i(x_0)|^2h(g_i(x_0))$$$$\leq \sum_{i,i\rightarrow g_i(x_0) \mbox{ possible}}|\nu_i(x_0)|^2h(x_0)=h(x_0).$$
Thus, we must have equality in the inequality, which means that $h(g_i(x_0))=h(x_0)$ whenever $\nu_i(x_0)\neq 0$. Therefore $g_i(x_0)\in S$ if the the transition is possible, so $S$ is invariant. Similarly for $I$.
\end{proof}

\begin{lemma}\label{lem4.10}
If $h$ is a continuous fixed point of the Ruelle operator then $h$ is constant on minimal compact invariant sets.
\end{lemma}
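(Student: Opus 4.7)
The plan is to combine Lemma \ref{lem4.9} with the minimality of the compact invariant set. Let $M$ be a minimal compact invariant set, and consider the real-valued continuous fixed point $h$ (if $h$ is complex-valued, I would first split it into real and imaginary parts, each of which is again a fixed point of $R$ since $R$ has real coefficients $|\nu_i(t)|^2$, and treat each separately).

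First I would form the two sets
\[
S := \{x_0 \in M : h(x_0) = \sup_{x \in M} h(x)\}, \qquad I := \{x_0 \in M : h(x_0) = \inf_{x \in M} h(x)\}.
\]
Since $M$ is compact and $h$ is continuous, the sup and inf are attained, so $S$ and $I$ are non-empty. Being preimages of single points under a continuous function, intersected with the compact set $M$, they are closed subsets of $M$, hence compact.

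Next I would invoke Lemma \ref{lem4.9}, which says precisely that $S$ and $I$ are invariant subsets of the compact invariant set $F = M$. So $S$ and $I$ are non-empty compact invariant subsets of $M$. Since $M$ is minimal, this forces $S = M$ and $I = M$. Therefore $\sup_{x \in M} h(x) = \inf_{x \in M} h(x)$, which means $h$ is constant on $M$.

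I do not expect any genuine obstacle; Lemma \ref{lem4.9} does all the structural work, and minimality closes the argument. The only subtlety is ensuring the argument applies to complex-valued $h$, which is handled by treating real and imaginary parts separately as noted above.
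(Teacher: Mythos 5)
Your proof is correct and follows essentially the same route as the paper: reduce to real-valued $h$ by splitting into real and imaginary parts, apply Lemma \ref{lem4.9} to the set where the supremum is attained, and conclude by minimality. The only (harmless) difference is that you also invoke the infimum set $I$; the paper observes that $S=M$ alone already forces $h$ to be constant, since then every point of $M$ attains the supremum.
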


\begin{proof}
If $Rh=h$, then the real and imaginary part of $h$ are also fixed points of the Ruelle operator, as $R$ is order preserving. Hence we may assume that $h$ is real-valued. Let $M$ be a minimal compact invariant set. By Lemma \ref{lem4.9}, the set
$$S=\{x_0\in M : h(x_0)=\sup_{x\in M}h(x)\}$$
is compact invariant and contained in $M$. Since $M$ is minimal, it follows that $S=M$, so $h$ is constant on $M$.
\end{proof}

\begin{lemma}\label{lem4.11}

Let $h$ be a continuous fixed point of the Ruelle operator. If $h=0$ on all minimal compact invariant sets, then $h=0$ on $\mathcal T$.

\end{lemma}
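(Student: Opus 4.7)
The plan is to show that for every $t\in\mathcal T$ one has $h(t)=0$, by restricting attention to the closed orbit of $t$. Writing $h=\operatorname{Re} h + i\operatorname{Im} h$ and noting that the Ruelle operator $R$ has nonnegative real coefficients $|\nu_i(t)|^2$, both $\operatorname{Re} h$ and $\operatorname{Im} h$ are continuous fixed points of $R$ that vanish on every minimal compact invariant set; hence it suffices to treat the case of real-valued $h$.

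Fix $t\in\mathcal T$ and let $F:=\overline{\mathcal O(t)}$, which by Lemma \ref{lem4.11-0} is a compact invariant set containing $t$. Since $h$ is continuous on the compact set $F$, its supremum $\alpha:=\sup_{x\in F}h(x)$ is attained, and by Lemma \ref{lem4.9} the level set
\[
S:=\{x\in F : h(x)=\alpha\}
\]
is a nonempty compact invariant subset of $F$.

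The key step is then to produce a minimal compact invariant set $M\subseteq S$. I would obtain $M$ via Zorn's lemma applied to the family of nonempty closed invariant subsets of $S$ ordered by reverse inclusion: every chain $\{T_\lambda\}$ has a lower bound, namely $\bigcap_\lambda T_\lambda$, which is invariant (intersections of invariant sets are immediately invariant from the definition) and nonempty by the finite intersection property in the compact space $S$. Any minimal element of this poset is then a minimal compact invariant set. By the hypothesis of the lemma $h\equiv 0$ on $M$; but $M\subseteq S$ forces $h\equiv\alpha$ on $M$, so $\alpha=0$. Applying the symmetric argument to the infimum-achieving set given by Lemma \ref{lem4.9} yields $\inf_F h=0$, so $h\equiv 0$ on $F$, and in particular $h(t)=0$.

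The only mildly non-routine ingredient is the Zorn's lemma step extracting a minimal compact invariant set from an arbitrary nonempty compact invariant set; here I must verify both that intersections of invariant sets remain invariant and that any chain of nonempty closed subsets of the compact set $S$ has nonempty intersection via the finite intersection property. Everything else is an immediate appeal to Lemmas \ref{lem4.11-0} and \ref{lem4.9}. Note that I deliberately avoid invoking any finiteness of the set of minimal compact invariant sets, since that conclusion is itself part of Theorem \ref{th3.5} whose proof this lemma supports.
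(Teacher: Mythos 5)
Your proposal is correct and follows essentially the same route as the paper: reduce to real-valued $h$, apply Lemma \ref{lem4.9} to the supremum level set on $\overline{\mathcal O(t)}$, extract a minimal compact invariant set via Zorn's lemma to force the supremum to be $0$, and argue symmetrically for the infimum. The only difference is that you spell out the Zorn's lemma step (invariance of intersections and the finite intersection property), which the paper leaves implicit.
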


\begin{proof}
Taking the real and imaginary parts, we can assume $h$ is real valued. Let $x_0\in\mathcal T$. By Lemma \ref{lem4.9}, the set
$$S=\{ x\in \cj{\mathcal{O}(x_0)} : h(x)=\sup_{t\in\cj{\mathcal{O}(x_0)}}h(t)\}$$
is compact invariant, so, by Zorn's lemma, it contains a minimal compact invariant set $M_0$. Then, for all $x\in M_0$,
$$0=h(x)=\sup_{t\in\cj{\mathcal{O}(x_0)}}h(t).$$
Similarly
$$0=\inf_{t\in\cj{\mathcal{O}(x_0)}}h(t).$$

So $h$ is 0 on $\cj{\mathcal{O}(x_0)}$. Since $x_0$ was arbitrary, $h$ is 0 everywhere.

\end{proof}

\begin{lemma}\label{lem4.12}

Let $M$ be a minimal compact invariant set. Then $H(M)$ is irreducible for the representation $(S_i)_{i=0}^{N-1}$.

\end{lemma}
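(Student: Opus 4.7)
The plan is to show that the commutant of the subrepresentation $(H(M), S_i|_{H(M)})$ consists only of scalar multiples of the identity, which is equivalent to irreducibility. Take any bounded operator $A$ on $H(M)$ commuting with each $S_i|_{H(M)}$ and $S_i^*|_{H(M)}$, and extend it to $\tilde A$ on $H$ by setting $\tilde A = 0$ on $H(M)^\perp$. Since Lemma \ref{lem3.5} tells us $H(M)$ is invariant under both $S_i$ and $S_i^*$, so is $H(M)^\perp$, and therefore $\tilde A$ lies in the commutant of the full Cuntz representation $(S_i)_{i=0}^{N-1}$ on $H$.

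Next, by Lemma \ref{las1} the function $h_{\tilde A}$ is a continuous fixed point of the Ruelle operator on $\mathcal T$, so Lemma \ref{lem4.10} makes $h_{\tilde A}$ constant on each minimal compact invariant set. Let $c\in\mathbb C$ be the value of $h_{\tilde A}$ on $M$. The orthogonal projection $P_{H(M)}$ also commutes with every $S_i$ (for the same invariance reason as above), so $B:=\tilde A - cP_{H(M)}$ again lies in the commutant, and $h_B$ is likewise a continuous fixed point of the Ruelle operator.

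The heart of the argument is to show that $h_B$ vanishes on every minimal compact invariant set. For $t\in M$ we have $e_t\in K(M)\subseteq H(M)$, so $h_{P_{H(M)}}(t)=\|P_{H(M)}e_t\|^2=1$ and $h_B(t)=c-c\cdot 1=0$. For $t$ in any other minimal compact invariant set $M_j$, Lemma \ref{lemdist} combined with Lemma \ref{lem4.6.1} gives $H(M_j)\perp H(M)$, so $e_t\in K(M_j)\subseteq H(M)^\perp$; thus $\tilde A e_t=0$ and $P_{H(M)}e_t=0$, whence $h_B(t)=0$. Applying Lemma \ref{lem4.11} forces $h_B\equiv 0$ on $\mathcal T$, and Lemma \ref{lem4.8} then yields $B=0$, i.e., $\tilde A=cP_{H(M)}$. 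Restricting back to $H(M)$ produces $A=cI_{H(M)}$, proving irreducibility.

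The main obstacle is simply the correct bookkeeping to guarantee that the extended operator $\tilde A$ and the projection $P_{H(M)}$ genuinely belong to the commutant of $\{S_i\}$ on $H$, and to pin down the value of $h_B$ on the other minimal compact invariant sets. Once the orthogonality of the $H(M_j)$'s from Lemma \ref{lem4.6.1} is invoked, the rest is a direct application of the Ruelle fixed-point machinery built in Lemmas \ref{las1}--\ref{lem4.11}.
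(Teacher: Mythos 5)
Your proof is correct and follows essentially the same route as the paper: both arguments feed an element of the commutant into the Ruelle fixed-point machinery (Lemmas \ref{las1}, \ref{lem4.10}, \ref{lem4.11}, \ref{lem4.8}) together with the orthogonality of the $H(M_j)$ from Lemma \ref{lem4.6.1}, showing the associated function vanishes on all minimal compact invariant sets and hence that the operator is a scalar multiple of $P_{H(M)}$. The only cosmetic difference is that you work with a general operator in the commutant of the subrepresentation (extended by zero), whereas the paper works with the projection onto an invariant subspace $L\subseteq H(M)$ and concludes $p_L=cp_M$ with $c\in\{0,1\}$; these are standard equivalent formulations of irreducibility.
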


\begin{proof}
Suppose $L$ is a closed subspace of $H(M)$ which is invariant for the representation $(S_i)$. Then, let $P_L$ be the projection from $H$ to $L$, $P_K$ the projection from $H$ to $K$ and $P_M$ the projection from $H$ to $H(M)$. By Theorem \ref{dil} and Lemma \ref{las1}, the function
$$h_L(t)=\ip{P_KP_LP_Ke_t}{e_t}=\ip{P_Le_t}{e_t},\quad(t\in\mathcal T),$$
is a continuous fixed point of the Ruelle operator. Also since $0\leq P_L\leq P_M$, we have, with $h_M(t)=\ip{P_KP_MP_Ke_t}{e_t}=\ip{P_Me_t}{e_t}$,
$0\leq h_L\leq h_M.$
From Lemma \ref{lem4.6.1} we get that $h_M=0$ on all minimal compact invariant sets different than $M$. Indeed, if $M'$ is such a set then, for $t\in M'$, $P_Me_t=0$, so
$h_M(t)=\ip{P_Me_t}{e_t}=0$.

Since $0\leq h_L\leq h_M$, we get that $h_L$ is zero on all minimal compact invariant sets different than $M$. Also, from Lemma \ref{lem4.10}, $h_L$ is constant $c$ on $M$. Since $h_M=1$ on $M$, we get that $ch_M-h_L=0$ on all minimal compact invariant sets. Therefore, by Lemma \ref{lem4.11}, $h_L=ch_M$ on $\mathcal T$. This implies, by Lemma \ref{lem4.8}, that $P_L=cP_M$. Since $P_L$ and $P_M$ are projections, it follows that $c=0$ or $c=1$, and this means that $L$ is either $\{0\}$ or $H(M)$, so $H(M)$ is irreducible.
\end{proof}

\begin{lemma}\label{lem4.12.1}
If $M_1,\dots, M_p$ is a complete list, without repetitions, of the minimal compact invariant sets, then 
$$\oplus_{i=1}^p H(M_i)=H.$$

\end{lemma}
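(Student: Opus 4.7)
The plan is to show that the orthogonal complement of $L := \oplus_{i=1}^p H(M_i)$ is trivial, by extracting from it a projection in the commutant of $(S_i)$ and then applying the machinery of Lemmas \ref{las1}, \ref{lem4.8}, and \ref{lem4.11} we have just developed.

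First I would observe that $L$ is closed and invariant for the representation. Each $H(M_j)$ is invariant under all $S_i$ by Lemma \ref{lem3.5}, and it is invariant under $S_i^*$ as well: if $\omega = \omega_1\dots\omega_n \neq \ty$ and $t \in M_j$, then $S_i^* S_\omega e_t = \delta_{i,\omega_1} S_{\omega_2\dots\omega_n} e_t \in H(M_j)$, while $S_i^* e_t = V_i^* e_t = \nu_i(t) e_{g_i(t)}$ with $g_i(t) \in M_j$ when the transition is possible. Consequently $L$ is invariant under both $S_i$ and $S_i^*$, and the same therefore holds for $L^\perp$.

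Let $q$ denote the orthogonal projection onto $L^\perp$. Since $L^\perp$ is invariant under every $S_i$ and every $S_i^*$, the projection $q$ lies in the commutant of the Cuntz representation $(H,S_i)_{i=0}^{N-1}$. By Definition \ref{defas1} and Lemma \ref{las1}, the function
\[
h_q(t) = \ip{q\, e_t}{e_t}, \qquad t \in \mathcal{T},
\]
is a continuous fixed point of the Ruelle operator $R$. Next I would evaluate $h_q$ on each minimal compact invariant set: if $t \in M_j$ then $e_t \in H(M_j) \subseteq L$, so $q\, e_t = 0$, and hence $h_q(t) = 0$. Thus $h_q$ vanishes on $M_1 \cup \cdots \cup M_p$, i.e., on every minimal compact invariant set.

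By Lemma \ref{lem4.11}, $h_q \equiv 0$ on all of $\mathcal{T}$. Then Lemma \ref{lem4.8}, applied to the commutant element $q$, forces $q = 0$, so $L^\perp = \{0\}$ and $L = H$. The only step where one must be attentive is the bookkeeping that $L^\perp$ is invariant under \emph{both} $S_i$ and $S_i^*$ (so that $q$ really is in the commutant); this is where the two-sided invariance of $H(M_j)$ established via Lemma \ref{lem3.5} is essential. The remainder is a clean application of the Ruelle-operator dictionary already in place.
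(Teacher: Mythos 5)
Your proof is correct and is essentially the paper's own argument: the paper works with the projection $p_{H_0}$ onto $\oplus_i H(M_i)$ and shows that $1-h_{p_{H_0}}$ vanishes on all minimal compact invariant sets before invoking Lemmas \ref{lem4.11} and \ref{lem4.8}, which is exactly your computation with $q=I-p_{H_0}$. The only (harmless) difference is that you spell out explicitly why $q$ lies in the commutant via the two-sided invariance of each $H(M_j)$, which the paper takes as already known.
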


\begin{proof}
Let $H_0=\oplus_i H(M_i)$ and let $P_{H_0}$ be the corresponding projection. We know that $P_{H_0}$ commutes with the representation $(S_i)$ and we consider the fixed point of the Ruelle operator $h_{H_0}(t)=\ip{P_KP_{H_0}P_Ke_t}{e_t}=\ip{P_{H_0}e_t}{e_t}$, $t\in\mathcal T$. 

For every $i=1,\dots,p$ and every $t\in M_i$, since $e_t\in H(M_i)$, we have that $h_{H_0}(t)=\ip{P_{H_0}e_t}{e_t}=\ip{e_t}{e_t}=1$. Thus $1-h_{H_0}$ is 0 on every minimal compact invariant set. Thus, according to Lemma \ref{lem4.11}, $1-h_{H_0}$ is zero everywhere, and by Lemma \ref{lem4.8}, $I-P_{H_0}=0$, which means that $H_0=H$. 
\end{proof}

\begin{lemma}\label{lem4.13}
Let $M_1$, $M_2$ be two distinct minimal compact invariant sets. Then the restrictions of the representation $(S_i)_{i=0}^{N-1}$ to $H(M_1)$ and $H(M_2)$ are disjoint. 
\end{lemma}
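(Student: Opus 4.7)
The plan is to reduce disjointness to an application of the fixed-point machinery already established for the Ruelle operator. Suppose $W\colon H(M_1)\to H(M_2)$ is an intertwiner between the two subrepresentations, i.e.\ $WS_i|_{H(M_1)}=S_i|_{H(M_2)}W$ and $WS_i^*|_{H(M_1)}=S_i^*|_{H(M_2)}W$ for all $i$. I will extend $W$ to a global operator $\widetilde{A}\colon H\to H$ that lies in the commutant of the full Cuntz representation, and then show $\widetilde{A}=0$, forcing $W=0$.

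Using the orthogonal decomposition $H=\bigoplus_{j=1}^p H(M_j)$ from Lemma \ref{lem4.12.1}, each $H(M_j)$ is a reducing subspace for $(S_i)_{i=0}^{N-1}$: it is $S_i$-invariant by Lemma \ref{lem3.5}, and its orthogonal complement $\bigoplus_{k\neq j}H(M_k)$ is also $S_i$-invariant, hence $H(M_j)$ is $S_i^*$-invariant as well. Define $\widetilde{A}$ to be $W$ on $H(M_1)$ (viewing the range $H(M_2)$ inside $H$) and $0$ on $H(M_1)^\perp$. The intertwining property of $W$ together with the reducing property of $H(M_1)$ and $H(M_2)$ makes $\widetilde{A}$ commute with every $S_i$ and every $S_i^*$, so $\widetilde{A}$ is in the commutant of $(S_i)_{i=0}^{N-1}$.

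Now apply Lemma \ref{las1} to the function
$$h_{\widetilde{A}}(t)=\ip{\widetilde{A}e_t}{e_t},\qquad t\in\mathcal T;$$
this is a continuous fixed point of the Ruelle operator $R$. I claim $h_{\widetilde{A}}$ vanishes on every minimal compact invariant set. For $t\in M_1$, we have $e_t\in K(M_1)\subseteq H(M_1)$, so $\widetilde{A}e_t=We_t\in H(M_2)$, and by Lemma \ref{lem4.6.1} the spaces $H(M_1)$ and $H(M_2)$ are orthogonal, so $\ip{We_t}{e_t}=0$. For $t\in M_j$ with $j\neq 1$, $e_t\in H(M_j)\subseteq H(M_1)^\perp$, so $\widetilde{A}e_t=0$ and again $h_{\widetilde{A}}(t)=0$.

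Finally, Lemma \ref{lem4.11} forces $h_{\widetilde{A}}\equiv 0$ on $\mathcal T$, and then Lemma \ref{lem4.8} gives $\widetilde{A}=0$, hence $W=0$. The main conceptual step is the first one — recognizing that an intertwiner between two invariant subspaces can be packaged as a commutant element via the orthogonal decomposition — after which the Ruelle-operator apparatus developed above closes the argument mechanically; there are no routine calculations that merit expansion beyond checking that $\widetilde{A}$ indeed commutes with $S_i$ and $S_i^*$.
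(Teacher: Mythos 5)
Your proposal is correct and follows essentially the same route as the paper's proof: extend the intertwiner by zero on $H(M_1)^\perp$ to get a commutant element, pass to the continuous Ruelle fixed point $h_{\widetilde A}$, show it vanishes on every minimal compact invariant set via the orthogonality in Lemma \ref{lem4.6.1}, and conclude with Lemmas \ref{lem4.11} and \ref{lem4.8}. The only (harmless) difference is that you justify the commutation of $\widetilde A$ explicitly via the reducing property of the $H(M_j)$, a step the paper leaves implicit.
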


\begin{proof}
Let $A:H(M_1)\rightarrow H(M_2)$ be an operator that intertwines the two representations. Extend $A$ from $H$ to $H$, by letting $A=0$ on the orthogonal complement of $H(M_1)$. Then $A$ commutes with the representation $(S_i)$. We claim that the fixed point of the Ruelle operator $h_A(t):=\ip{P_KAP_Ke_t}{e_t}=\ip{Ae_t}{e_t}=0$ for all $t\in\mathcal T$. 

If $t\in M_1$, then $e_t\in H(M_1)$ and $Ae_t\in H(M_2)$, so, by Lemma \ref{lem4.6.1}, we get that $h_A(t)=\ip{Ae_t}{e_t}=0$. 

If $t$ is in a minimal compact invariant set $M$ different than $M_1$, then $H(M)$ is orthogonal to $H(M_1)$, by Lemma \ref{lem4.6.1}, so $Ae_t=0$ by definition. Therefore $h_A(t)=\ip{Ae_t}{e_t}=0$ in this case too. So, $h_A$ is zero on all the minimal compact invariant sets, therefore $h_A$ is zero everywhere, according to Lemma \ref{lem4.11}, which implies that $A=0$, by Lemma \ref{lem4.8}. 

\end{proof}

\end{proof}

\begin{proof}[Proof of Theorem \ref{th2.11}]

Assume now that the maps $g_i$ are one-to-one and that the minimal compact invariant sets are finite. We prove a Lemma. 

\begin{lemma}\label{lem4.17}
Assume that all the maps $g_i$, $i=0,\dots,N-1$ are one-to-one. Let $M$ be a minimal finite invariant set. Then for any $t_1\neq t_2$ in $M$, $e_{t_1}\perp e_{t_2}$.

\end{lemma}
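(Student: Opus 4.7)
The plan is to encode all pairwise inner products in the Gram matrix $G$ on $\ell^2(M)$ defined by $G_{s,t}=\langle e_s,e_t\rangle$, and to show $G = I$. Since $M$ is invariant, the co-isometry relation $\sum_i V_iV_i^* = I_K$ applied to $\langle e_s,e_t\rangle$ (using $V_i^*e_t=\nu_i(t)e_{g_i(t)}$) yields the fixed-point equation
\[
G_{s,t}=\sum_{i=0}^{N-1}\overline{\nu_i(s)}\,\nu_i(t)\,G_{g_i(s),g_i(t)},
\]
together with the normalization $G_{t,t}=\|e_t\|^2=1$. The crucial use of the hypothesis that each $g_i$ is one-to-one is that for $s\neq t$ one has $g_i(s)\neq g_i(t)$; this implies the diagonal Kronecker delta $\delta_{s,t}$ satisfies the same recursion, and therefore so does $F := G-I$, which has zero diagonal. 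The task reduces to showing $F\equiv 0$.

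Next I would run a maximum-principle argument on the finite set $M$. Let $C:=\max_{s,t\in M}|F_{s,t}|$; if $C>0$ it is attained at some $(s^*,t^*)$ with $s^*\neq t^*$. Iterating the recursion to depth $n$ and applying triangle inequality followed by Cauchy--Schwarz, together with $\sum_{|\omega|=n}|\nu_\omega(t)|^2=1$, gives
\[
C=|F_{s^*,t^*}|\;\leq\; C\sum_{|\omega|=n}|\nu_\omega(s^*)|\,|\nu_\omega(t^*)|\;\leq\; C.
\]
Equality throughout forces two rigidities: (i) $|\nu_\omega(s^*)|=|\nu_\omega(t^*)|$ for every word $\omega$ (Cauchy--Schwarz tight), and (ii) $|F_{g_\omega(s^*),g_\omega(t^*)}|=C$ for every admissible $\omega$. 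Consequently the set $\mathcal M_{\max}:=\{(s,t)\in M\times M:s\neq t,\ |F_{s,t}|=C\}$ is nonempty and invariant under the product random walk $(s,t)\mapsto(g_i(s),g_i(t))$ on $M\times M\setminus\Delta$; note that injectivity of $g_i$ is exactly what guarantees this product walk preserves the off-diagonal set. By minimality of $M$, both coordinate projections of $\mathcal M_{\max}$ are invariant subsets of $M$ containing $s^*$ and $t^*$ respectively, hence each equals $M$.

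The main obstacle is converting this rigidity into $C=0$; my plan is to exploit the irreducibility of the Cuntz representation on $H(M)$ already established in Theorem~\ref{th3.5}. By Theorem~\ref{dil}, this irreducibility gives $\mathcal B(K(M))^{\sigma_M}=\mathbb{C}\,I_{K(M)}$, where $\sigma_M(T)=\sum_i W_iTW_i^*$ and $W_i=P_{K(M)}S_iP_{K(M)}$. The plan is to push the recursion for $G$ on $\ell^2(M)$ through the surjection $\Phi:\ell^2(M)\to K(M),\ \delta_t\mapsto e_t$, producing a nonscalar $\sigma_M$-fixed operator on $K(M)$ out of any nontrivial $F$ — contradicting the triviality of the commutant. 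The zero-diagonal condition $F_{t,t}=0$ then pins the resulting scalar to zero, giving $F\equiv 0$, i.e., $G=I$ and hence orthonormality. The technical subtlety in this last step, and the part I expect to require the most care, is that $\Phi$ need not be injective (so that a priori the $e_t$ could satisfy nontrivial linear relations); one must therefore work with $\Phi\Phi^*$ and use the intertwining $W_i^*\Phi=\Phi\tilde V_i^*$ (where $\tilde V_i^*\delta_t=\nu_i(t)\delta_{g_i(t)}$) in combination with $\sum_i \tilde V_i\tilde V_i^*=I_{\ell^2(M)}$, which again relies on the global injectivity of the $g_i$.
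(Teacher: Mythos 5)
Your setup (the Gram-matrix recursion $G_{s,t}=\sum_i\nu_i(s)\cj{\nu_i(t)}G_{g_i(s),g_i(t)}$, the observation that injectivity of the $g_i$ on $M$ makes the identity matrix a solution so that $F=G-I$ is again a fixed point, and the maximum-principle rigidity) is correct, but the closing step is a genuine gap, and the gap is structural rather than technical: nowhere do you use that the maps $g_i$ are \emph{contractions}, and the lemma is false without that hypothesis, so no amount of care with $\Phi\Phi^*$ can make the last step work. Concretely, take $K=\bc$, $N=2$, $V_0=V_1=\tfrac1{\sqrt2}$, $M=\{a,b\}$ with $g_0=g_1$ the swap of $a$ and $b$, $\nu_i\equiv\tfrac1{\sqrt2}$, and $e_a=e_b=1$. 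Then $\sum_iV_iV_i^*=I$, each $g_i$ is injective, the walk on the finite set $M$ is irreducible, $\sum_i\tilde V_i\tilde V_i^*=I_{\ell^2(M)}$, $F=G-I$ is a nonzero $\tilde\sigma$-fixed operator with zero diagonal, and yet $\mathcal B(K(M))^{\sigma}=\bc I$ (so the Cuntz dilation \emph{is} irreducible) while $\ip{e_a}{e_b}=1\neq0$. In this example your proposed pushforward gives $\Phi F\Phi^*=2I$, a scalar: the kernel of $\Phi$ absorbs the nontrivial part of $F$, so no contradiction with triviality of the commutant can be extracted. This shows that ``irreducibility of the representation on $H(M)$ plus injectivity, finiteness and minimality'' is simply not enough information to force $G=I$; the intertwining $V_i^*\Phi=\Phi\tilde V_i^*$ only goes in the co-isometry direction, and $\Phi F\Phi^*$ need not even be $\sigma_M$-fixed in general (that would require something like the reversing property for $V_i\Phi$).

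The missing ingredient is exactly where the paper puts all the weight: the contraction property \eqref{eqas3}. The paper's proof is short and direct. Let $d=\min\{\dist(x,y):x\neq y\in M\}$ and choose $n$ with $c^n\dist(t_1,t_2)<d$, $c$ a common contraction ratio. If $|\omega|\geq n$ and both transitions $t_1\rightarrow g_\omega(t_1)$ and $t_2\rightarrow g_\omega(t_2)$ were possible, then both images lie in $M$ and are within distance $c^n\dist(t_1,t_2)<d$, hence equal; injectivity of $g_\omega$ then forces $t_1=t_2$, a contradiction. So for every word of length $n$ one has $\nu_\omega(t_1)=0$ or $\nu_\omega(t_2)=0$ (this is precisely the separating property), and then
\begin{equation*}
\ip{e_{t_1}}{e_{t_2}}=\sum_{|\omega|=n}\ip{S_\omega^*e_{t_1}}{S_\omega^*e_{t_2}}=\sum_{|\omega|=n}\nu_\omega(t_1)\cj{\nu_\omega(t_2)}\ip{e_{g_\omega(t_1)}}{e_{g_\omega(t_2)}}=0
\end{equation*}
termwise. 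If you want to salvage your scheme, you would have to prove this separating property first — but once you have it, the length-$n$ expansion above finishes the proof immediately, and the maximum principle and the appeal to Theorem \ref{th3.5} become superfluous.
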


\begin{proof}
Let $d:=\min\{\dist(x,y) :x,y\in M, x\neq y\}$. Let $0<c<1$ be a common contraction ratio for the maps $(g_i)$ and let $n$ be large enough, so that $c^n \dist(t_1,t_2)<d$. Then, if $|\omega|\geq n$, and the transitions $t_1\rightarrow g_\omega(t_1)$ and $t_2\rightarrow g_\omega(t_2)$ are possible (in several steps), then $g_\omega(t_1),g_\omega(t_2)\in M$ and 
$$\dist(g_\omega(t_1),g_\omega(t_2))\leq c^n \dist (t_1,t_2)<d,$$
so $g_\omega(t_1)=g_\omega(t_2)$ and, since the maps $g_i$ are one-to-one, it follows that $t_1=t_2$, a contradiction. Thus, for $|\omega|\geq n$ one of the transitions  $t_1\rightarrow g_\omega(t_1)$ and $t_2\rightarrow g_\omega(t_2)$ is not possible. Then, using the Cuntz relations
$$\ip{e_{t_1}}{e_{t_2}}=\sum_{|\omega|=n}\ip{S_\omega^*e_{t_1}}{S_\omega^*e_{t_2}}=\sum_{|\omega|=n}\nu_\omega(t_1)\cj\nu_\omega(t_2)\ip{e_{g_\omega(t_1)}}{e_{g_\omega(t_2)}}=0.$$

\end{proof}

Lemma \ref{lem4.17} also shows that the random walk on $M$ is also separating. Since in a minimal finite invariant set $M$, the orbit of any point is $M$, it follows that the random walk on $M$ is irreducible. By Proposition \ref{pr2_14}, $\{V_i\}$ is simple on $K(M)$ for any minimal finite invariant set. 

We know that the Cuntz representation on $H(M_i)$ is irreducible, and therefore, with Theorem \ref{thm1}, we have that $\Span\{S_\omega e_{c_i} : \omega\in\Omega\}=H(M_i)$ and so $\lim_n P_{H_{c_i}^{(n)}}v=P_{H(M_i)}v$, for any $v\in H(M_i)$. 

The proof that we have a Parseval frame is very similar to the end proof of Theorem \ref{thm1}. 

Let $v\in K$. We have, for all $n\geq0$, using the fact that $\{V_j\}$ is reversing on all the spaces $K(M_i)$:

$$\sum_{i=1}^p\left\|P_{H_{c_i}^{(n)}}v \right\|^2=\sum_{i=1}^p\sum_{\omega\in\Omega_{c_i}^{(n)}}\left|\ip{S_\omega e_{c_i}}{v}\right|^2=\sum_{i=1}^p\sum_{\omega\in\Omega_{c_i}^{(n)}}\left|\ip{P_KS_\omega e_{c_i}}{v}\right|^2
=\sum_{i=1}^p\sum_{\omega\in\Omega_{c_i}^{(n)}}\left|\ip{V_\omega e_{c_i}}{v}\right|^2$$
$$=\sum_{i=1}^p\sum_{\omega\in\Omega_{c_i}^{(0)}}\sum_{\stackrel{\beta_1,\dots\beta_n}{\mbox{ cycle words for $c_i$}}}\left|\ip{V_{\omega\beta_1\dots\beta_n}e_{c_i}}{v}\right|^2$$$$
=\sum_{i=1}^p\sum_{\omega\in\Omega_{c_i}^{(0)}}\sum_{\stackrel{\beta_1,\dots\beta_n}{\mbox{ cycle words for $c_i$}}}|\nu_{\beta_1}(c_i)|^2\dots|\nu_{\beta_n}(c_i)|^2\left|\ip{V_{\omega_0}e_{c_i}}{v}\right|^2.$$

But
$$\sum_{\beta_i\mbox{ cycle word for $c_i$}}|\nu_{\beta_i}(c)|^2=1,$$
by Lemma \ref{lem4.15}, so we obtain
$$\sum_{i=1}^p\left\|P_{H_{c_i}^{(n)}}v \right\|^2=\sum_{i=1}^p\sum_{\omega\in\Omega_{c_i}^{(0)}}\left|\ip{V_{\omega_0}e_{c_i}}{v}\right|^2.$$
Then,
$$\left\| v\right\|^2=\lim_{n\rightarrow \infty}\sum_{i=1}^p\left\|P_{H_c^{(n)}}v \right\|^2=\sum_{i=1}^p\sum_{\omega\in\Omega_{c_i}^{(0)}}\left|\ip{V_{\omega_0}e_{c_i}}{v}\right|^2.$$

\end{proof}

The next proposition gives a sufficient condition for all the minimal compact invariant sets to be finite. 

\begin{proposition}\label{pr3.38}
Suppose the Assumptions \ref{as1} hold. Assume that at least one of the minimal compact invariant sets is finite. Let $X_0$ be the attractor of the iterated function system $\{g_i\}_{i=0}^{N-1}$. Assume that the zero sets $\{x\in X_0: \nu_i(x)=0\}$ are finite, and that all the maps $g_i$ are one-to-one, for all $i\in\{0,\dots,N-1\}$. Then all the minimal compact invariant sets are finite. 
\end{proposition}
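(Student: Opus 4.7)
The plan is to argue by contradiction: assume that $M$ is an infinite minimal compact invariant set and derive a contradiction. I would show that such an $M$ must coincide with the entire attractor $X_0$, which then conflicts with the existence of a distinct finite minimal compact invariant set (guaranteed by hypothesis) together with the disjointness of distinct minimal compact invariant sets established in Theorem \ref{th3.5}.

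First I would show that $M$ is perfect. Let $M^d$ denote the set of points $x\in M$ such that every neighborhood of $x$ contains a point of $M\setminus\{x\}$; this is a closed subset of $M$, and since $M$ is infinite and compact, $M^d$ is nonempty. I claim $M^d$ is invariant in the sense of Definition \ref{def3.4}. Indeed, suppose $x\in M^d$ and $\nu_i(x)\neq 0$, and pick a sequence $x_n\in M$ with $x_n\neq x$ and $x_n\to x$. By continuity of $\nu_i$, we have $\nu_i(x_n)\neq 0$ for $n$ large, so the transition $x_n\to g_i(x_n)$ is possible and $g_i(x_n)\in M$ by invariance of $M$. Continuity of $g_i$ gives $g_i(x_n)\to g_i(x)$, while the one-to-one hypothesis on $g_i$ gives $g_i(x_n)\neq g_i(x)$; hence $g_i(x)$ is a limit point of $M$, so $g_i(x)\in M^d$. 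Since $M^d$ is a nonempty closed invariant subset of the minimal $M$, we conclude $M^d=M$, i.e., $M$ has no isolated points.

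Next I would extend invariance from possible transitions to the full iterated function system. Since $Z:=\bigcup_i\{x\in X_0:\nu_i(x)=0\}$ is finite and $M$ is perfect, $M\setminus Z$ is dense in $M$. For every $y\in M\setminus Z$ all $N$ transitions are possible, so $g_i(y)\in M$ for all $i$, giving $g_i(M\setminus Z)\subseteq M$. By continuity of $g_i$ and closedness of $M$, this extends to $g_i(M)\subseteq M$ for all $i$. Letting $F(K):=\bigcup_i g_i(K)$ denote the Hutchinson operator, we then have $F(M)\subseteq M$, hence $F^n(M)\subseteq M$ for every $n$. Since $F^n(M)\to X_0$ in the Hausdorff metric and $M$ is closed, we conclude $X_0\subseteq M$. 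Combined with $M\subseteq X_0$ (shown in the proof of Theorem \ref{th3.5}), this forces $M=X_0$.

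To finish, let $M_0$ be a finite minimal compact invariant set, which exists by hypothesis. Then $M_0\subseteq X_0=M$, and by the disjointness of distinct minimal compact invariant sets we must have $M_0=M$, contradicting that $M_0$ is finite while $M$ is infinite. The main obstacle is establishing the invariance of $M^d$ in the first step, where the one-to-one hypothesis on the $g_i$ is essential: without it, the image sequence $g_i(x_n)$ could collapse to the single point $g_i(x)$, blocking the conclusion that $g_i(x)\in M^d$ and breaking the entire argument.
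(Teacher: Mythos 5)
Your argument is correct, and it takes a genuinely different route from the paper's. The paper argues forward from the finite minimal set $M_0$: choosing $x_0\in M_0$ and a loop word $\omega$ at $x_0$ with $\nu_\omega(x_0)\neq 0$, it uses continuity of $\nu_\omega$ near $x_0$ together with the contraction property to show that, for a fixed $n$, no point $x$ of another minimal set $M$ can make the transition through $\omega$ repeated $n$ times (otherwise the points $g_\omega^{n+k}(x)\in M$ would converge to the fixed point $x_0$ of $g_\omega$, forcing $x_0\in M$ and contradicting disjointness of minimal sets); consequently every $x\in M$ lies in a finite union of preimages, under compositions of the injective maps $g_i$, of the finite zero sets, which bounds the cardinality of $M$ explicitly. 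You instead run a soft topological contradiction: an infinite minimal compact invariant set must be perfect (this is where you need injectivity, to propagate limit points through $g_i$), hence, off the finite zero set and by density, fully forward invariant under every $g_i$, hence invariant under the Hutchinson operator, and therefore it contains --- and so equals --- the attractor $X_0$; the finite minimal set would then be a nonempty compact invariant subset of it, contradicting minimality (equivalently, disjointness). Both proofs use the same ingredients (continuity of the $\nu_i$, the inclusion $M\subseteq X_0$, contractivity, disjointness/minimality, finiteness of the zero sets, injectivity of the $g_i$), but they deploy injectivity at different points, and each buys something: yours yields the dichotomy that under these hypotheses either all minimal compact invariant sets are finite or $X_0$ is itself the unique minimal compact invariant set, while the paper's yields concrete structural information, namely that any other minimal set sits inside an explicit finite union of preimages of the zero sets. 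One cosmetic remark: the inclusion $M\subseteq X_0$ (from the proof of Theorem \ref{th3.5}) is already needed when you claim that all $N$ transitions are possible at points of $M\setminus Z$, since the zero sets are only assumed finite inside $X_0$; you invoke it a step later, but it is of course available there as well.
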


\begin{proof}
Let $M_0$ be a finite minimal invariant set and let $M\neq M_0$ be a minimal compact invariant set. Let $x_0\in M_0$. Then, by Lemma \ref{lem4.11-0},  $\cj{\mathcal O(x_0)}=M_0$. Since $M_0$ is finite, we get that $\mathcal O(x_0)=M_0\ni x_0$, so, there exist $i_0,\dots,i_{p-1}$ such that the transition $x_0\rightarrow g_{i_{p-1}}\dots g_{i_0}(x_0)=x_0$ is possible, so $|\nu_\omega(x_0)|>0$, where $\omega=i_0\dots i_{p-1}$.

Since $\nu_\omega$ is continuous, there exists $\delta>0$ such that, if $d(x,x_0)<\delta$ then $|\nu_\omega(x)|>0$. 

Let $c$ be a common contraction ration for the maps $\{g_i\}$. Let $n\in\bn$ such that $c^{np}\textup{diam}(X_0)<\delta$. Let $x\in M$. Suppose the transition $x\rightarrow g_\omega^n(x)$ is possible. Then $g_\omega^n(x)\in M$. Also, 
$$d(g_\omega^n(x),x_0)=d(g_\omega^n(x),g_\omega^n(x_0))\leq c^{np}d(x,x_0)\leq c^{np}\textup{diam}(X_0)<\delta,$$
since $x\in M$, $x_0\in M_0$ and $M,M_0\subseteq X_0$. Therefore $|\nu_\omega(g_\omega^n(x))|>0$ and therefore the transition $g_\omega^n(x)\rightarrow g_\omega g_\omega^n(x)$ is possible. This means that $g_\omega^{n+1}(x)$ is in $M$. Also,
$$d(g_\omega^{n+1}(x),x_0)=d(g_\omega g_\omega^n(x),g_\omega(x_0)) \leq c^p d(g_\omega^n(x),x_0)<\delta.$$
By induction, $g_\omega^{n+k}(x)\in M$ and $d(g_\omega^{n+k}(x),x_0)<\delta$ for all $k\in\bn$. 

But, as $k\rightarrow\infty$, $g_\omega^{n+k}(x)$ converges to the fixed point of $g_\omega$, which is $x_0$. Since $g_\omega^{n+k}(x)\in M$ and $M$ is closed, it follows that $x_0\in M$, which contradicts the fact that $M\neq M_0$. 

Thus, the transition $x\mapsto g_\omega^n(x)$ is not possible so $\nu_{\stackrel{\omega\omega\dots \omega}{n\textup{ times}}}(x)=0$. Since $x\in M\subseteq X_0$, we have 
$$x\in \bigcup_{k=0}^{n-1}\bigcup_{j=0}^{p-1}(g_{i_j}\dots g_{i_0})^{-1}(\{y\in M : \nu_{i_{j+1}}(y)=0\}),$$
which is a finite set. Since $x$ was arbitrary in $M$, it follows that $M$ is finite. 
\end{proof}

\begin{corollary}\label{co3.39}
In the hypotheses of Theorem \ref{th3.5}, assume in addition that the operators $\{V_i\}$ are Cuntz isometries. If $\{V_i\}$ is reversing on $K$,  then for all $i=1,\dots,p$, $c\in M_i$, the numbers $|\nu_j(c)|$ are either 0 or 1, and $\{V_\omega e_{c_i} : \omega\in\Omega_{c_i}^{(0)},i=1,\dots,p-1\}$ is an orthonormal basis for $K$.
\end{corollary}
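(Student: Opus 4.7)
The overall plan is to prove the first assertion directly from the hypotheses and then to invoke Proposition \ref{princ}(1) separately on each summand of the decomposition supplied by Theorem \ref{th3.5}, upgrading the Parseval frame of Theorem \ref{th2.11} to an orthonormal basis. For the first claim, suppose $c \in M_j$ and $\nu_i(c) \neq 0$. The reversing hypothesis gives $V_i e_{g_i(c)} = \overline{\nu_i(c)}\, e_c$, and since $V_i$ is an isometry,
\[
1 = \|e_{g_i(c)}\| = \|V_i e_{g_i(c)}\| = |\nu_i(c)|.
\]
Combined with $\sum_i |\nu_i(c)|^2 = 1$, for each $c$ exactly one index $i(c)$ gives $|\nu_{i(c)}(c)| = 1$ and the rest vanish. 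Note this argument works on all of $\mathcal{T}$, not merely on the invariant sets.

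Because the $V_i$ are already Cuntz isometries, the Cuntz dilation of $(K, V_i)$ is $(K, V_i)$ itself, so $S_i = V_i$ and $H = K$. By Theorem \ref{th3.5}, $K = \bigoplus_{j=1}^p H(M_j)$, with each sub-representation on $H(M_j)$ irreducible. It therefore suffices, for each $j$, to verify the hypotheses of Proposition \ref{princ}(1) with $K_0 = K(M_j)$: namely, that $M_j$ is finite, that $(e_c)_{c \in M_j}$ is an orthonormal basis of $K(M_j)$, that the random walk on $M_j$ is irreducible, and that $\{V_i\}$ is simple on $K(M_j)$; then I assemble the resulting orthonormal bases of the $H(M_j)$ into one for $K$.

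The crux is orthonormality of $(e_c)_{c \in M_j}$, from which finiteness of $M_j$ will follow. For $c_1 \neq c_2$ in $M_j$, Cuntz orthogonality gives
\[
\ip{e_{c_1}}{e_{c_2}} = \sum_i \nu_i(c_1)\overline{\nu_i(c_2)}\, \ip{e_{g_i(c_1)}}{e_{g_i(c_2)}},
\]
which vanishes whenever $i(c_1) \neq i(c_2)$. Iterating, either the label sequences of the deterministic forward orbits $(T^n c_1)$ and $(T^n c_2)$ differ at some step (yielding $\ip{e_{c_1}}{e_{c_2}} = 0$), or they coincide at every step, in which case the contraction property gives $d(T^n c_1, T^n c_2) \leq c^n d(c_1, c_2) \to 0$, and continuity of $e$ together with the identity $|\ip{e_{c_1}}{e_{c_2}}| = |\ip{e_{T^n c_1}}{e_{T^n c_2}}|$ forces $|\ip{e_{c_1}}{e_{c_2}}| = 1$, so $e_{c_1}$ is a unimodular multiple of $e_{c_2}$. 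In that exceptional case, minimality of $M_j$ applied to the invariant set $\{c \in M_j : e_c \in \bc e_{c_1}\}$ forces $K(M_j) = \bc e_{c_1}$, one-dimensional, where the ONB claim reduces to a trivial single-vector verification. In the generic case $(e_c)_{c \in M_j}$ is orthonormal; continuity of $c \mapsto e_c$ precludes accumulation points in a family of unit vectors at pairwise distance $\sqrt{2}$, so the compact set $M_j$ is finite. Injectivity of each $g_i|_{M_j}$ (where transitions are possible) then follows from reversing: $g_i(c_1) = g_i(c_2)$ with both $\nu_i(c_k) \neq 0$ yields $V_i e_{g_i(c_1)} = \overline{\nu_i(c_1)} e_{c_1} = \overline{\nu_i(c_2)} e_{c_2}$, contradicting orthonormality. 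Finiteness together with minimality makes the random walk on $M_j$ irreducible and separating, and Proposition \ref{pr2_14} then delivers simplicity.

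All hypotheses having been checked, Proposition \ref{princ}(1) applied on $H(M_j)$ yields an orthonormal basis $\{V_\omega e_{c_j} : \omega \in \Omega_{c_j}^{(0)}\}$ of $H(M_j)$ for any chosen $c_j \in M_j$; collecting these across $j = 1, \dots, p$ gives the desired orthonormal basis of $K$ via the orthogonal decomposition of Theorem \ref{th3.5}. The main obstacle I expect is the degenerate same-labels case in the orthogonality analysis: one must carefully argue, via minimality of $M_j$ and the invariance of the set of points at which $e_c$ lies in a fixed complex line, that the only alternative to genuine orthonormality is the trivial one-dimensional collapse of $K(M_j)$, where the ONB claim becomes vacuous. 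A secondary subtlety is ensuring that the labeling of $\Omega_{c_j}^{(0)}$ is well-defined, which relies on the injectivity of the $g_i|_{M_j}$ established above.
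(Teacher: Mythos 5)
Your first claim and its proof coincide with the paper's: reversing plus the isometry property of $V_j$ forces $|\nu_j(c)|=1$ whenever $\nu_j(c)\neq 0$, and $\sum_j|\nu_j(c)|^2=1$ does the rest. For the basis statement you take a genuinely different, and much longer, route than the paper. The paper's argument is essentially one line: since the $V_i$ already satisfy the Cuntz relations, the dilation is trivial ($H=K$, $S_i=V_i$), so the Parseval frame $\{V_\omega e_{c_i}\}$ produced by Theorem \ref{th2.11} consists of unit vectors (each $V_\omega$ is an isometry), and a Parseval frame of unit vectors is automatically an orthonormal basis. Your route --- decompose $K=\bigoplus_j H(M_j)$ via Theorem \ref{th3.5} and apply Proposition \ref{princ}(1) on each block --- is legitimate and has the merit of exposing the structure (each $M_j$ is a single cycle, every long word reduces modulo a phase to one in $\Omega_{c_j}^{(0)}$), but it obliges you to re-verify the hypotheses of Theorem \ref{thm1} on each $K(M_j)$: finiteness of $M_j$, orthonormality of $(e_c)_{c\in M_j}$, injectivity of the $g_i$, irreducibility and simplicity.

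That re-verification is where there is a gap. The corollary should be read as inheriting the standing hypotheses of Theorem \ref{th2.11} (the $g_i$ one-to-one, all minimal compact invariant sets finite) --- the notation $\Omega_{c_i}^{(0)}$ presupposes them, and Lemma \ref{lem4.17} then gives orthonormality of $(e_c)_{c\in M_j}$ outright, so your ``degenerate case'' never arises. You instead try to derive finiteness and orthonormality from scratch, and your handling of the case $e_{c_1}=\lambda e_{c_2}$ with $c_1\neq c_2$ fails: the set $F=\{c\in M_j : e_c\in\bc e_{c_1}\}$ is not invariant. Indeed, if $e_c=\lambda e_{c_1}$ and $c\to g_i(c)$ is possible, applying $V_i^*$ gives $\nu_i(c)e_{g_i(c)}=\lambda\nu_i(c_1)e_{g_i(c_1)}$, so $e_{g_i(c)}\in\bc e_{g_i(c_1)}$ --- a different line from $\bc e_{c_1}$ in general --- so minimality cannot be applied to $F$. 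Moreover, even granting $\dim K(M_j)=1$, the conclusion is not a ``trivial single-vector verification'': $M_j$ could still be infinite, cycle words for $c_j$ are defined by the condition $g_\beta(c_j)=c_j$ on the point set $M_j$ rather than on the quotient by proportionality, and $\{V_\omega e_{c_j}:\omega\in\Omega_{c_j}^{(0)}\}$ could then contain mutually parallel vectors. The clean fixes are either to invoke Lemma \ref{lem4.17} under the intended hypotheses, or to note that non-proportional points of $M_j$ have orthogonal $e$-vectors, so the proportionality classes are both open and closed in the compact set $M_j$, hence finite in number, and then run your argument on the (finite) quotient walk. With one of these substituted for the degenerate-case paragraph, the rest of your argument goes through.
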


\begin{proof}
If $\{V_i\}$ is reversing on $K$, then for $c\in M_i$, if $\nu_j(c)\neq 0$, we have $V_je_{g_j(c)}=\cj\nu_j(c) e_c$. But, since $V_j$ is an isometry $\|V_j e_{g_j(c)}\|=1$, so $|\nu_j(c)|=1$. This implies that $\nu_k(c)=0$ for all $k\neq j$.

The fact that $\{V_\omega e_{c_i} : \omega\in\Omega_{c_i}^{(0)},i=1,\dots,p-1\}$ is an orthonormal basis for $K$ follows from Theorem \ref{th3.5} since the Cuntz dilation $\{S_j\}$ of $\{V_j\}$ is just $\{V_j\}$ on $K$.
\end{proof}

\section{Examples}\label{secex}

\begin{example}\label{ex4.1}
We start with a general example which combines exponential functions and piecewise constant functions on the attractor of an affine iterated function system. 

\begin{definition}\label{def4.1}
The Hilbert space will be the $L^2$-space associated to the invariant measure of an affine iterated function system.

Let $R$ be a $d\times d$ expansive integer matrix (i.e., all eigenvalues $\lambda$ have $\left| \lambda \right| > 1$). Let $B\subseteq \Z^{d}$, $0\in B$, $\left| B \right| = N$. Consider the affine iterated function system
\[
    \tau_b(x) = R^{-1}(x + b),\quad ( x\in \R^{d}, b\in B).
\]

Let $X_B$ be the attractor $X_B$ of the iterated function system $(\tau_b)_{b\in B}$, i.e., the unique compact set with the property that 
$$X_B=\bigcup_{b\in B}\tau_b(X_B).$$

Let $\mu_B$ be the invariant measure of the iterated function system, i.e., the unique Borel probability measure such that 

$$\int f\,d\mu_B=\frac{1}{N}\sum_{b\in B}\int f\circ\tau_b\,d\mu_B,$$
for all bounded Borel functions on $\br^d$. See \cite{Hut81} for details. 

We say that the measure $\mu_B$ has {\it no overlap} if 
$$\mu_B(\tau_b(X_B)\cap\tau_{b'}(X_B))=0,\mbox{ for all $b\neq b'$ in $B$.}$$

 Assume that $\mu_B$ has no overlap. Define $\mathcal{R}:X_B \to X_B$ by 
\[
    \mathcal{R}x = Rx - b, \quad x\in \tau_b(X_B),	
\]

so that $\mathcal R\tau_b(x)=x$ for all $x\in X_B$, $b\in B$. 

Now, to construct the co-isometry $(V_i)_{i=0}^{M-1}$ on the Hilbert space $L^2(\mu_B)$, suppose that there exist some points $l_0,\dots,l_{M-1}$ in $\Z^{d}$,  $l_0=0$ and $a_{i,b}\in \C$ ($i\in \{0,\dots,M-1\}, b\in B$) such that the matrix

\begin{align}\label{eq:0.1}
    \frac{1}{\sqrt[]{N}}\left( e^{2\pi i R^{-1} b \cdot l_i}a_{i,b} \right)_{i\in \{0,\dots,M-1\}, b\in B}
\end{align}
has orthonormal columns (so it is an isometry). In other words, for all $b,b'$ in $B$: 

\begin{equation}
\frac{1}{N}\sum_{i=0}^{M-1}a_{i,b}\cj a_{i,b'}e^{2\pi i R^{-1}(b-b')\cdot l_i}=\delta_{b,b'}.
\label{eq0.1.1}
\end{equation}

 Define the function $m_i$ on $X_B$ by: 
\begin{align*}
    m_i(x) = e^{2\pi i l_i\cdot x}\sum_{b\in B} a_{l_i,b}\chi_{\tau_b(X_B)}(x),\quad (x\in X_B,i\in\{0,\dots,M-1\}).
\end{align*} 
Here $\chi_A$ is the characteristic function of the subset $A$.

Define the operators $V_i$ on $L^2(\mu_B)$ by 
\begin{equation}
V_if(x) = m_l(x)f(\mathcal{R}x),\quad( x\in X_B, f\in L^2(\mu_B)).
\label{eq0.1.2}
\end{equation}

\end{definition}
\end{example}

\begin{proposition}\label{pr4.3}
    We have the following:

    \begin{enumerate}
        \item For $i\in\{0,\dots,M-1\}$ and $f\in L^2(\mu_B)$: 
				
				\begin{equation}
				V_i^{*}f(x) = \frac{1}{N}\sum_{b\in B}^{} \overline{m_i}(\tau_b(x))f(\tau_b(x)),\quad (x\in X_B). 
				\label{eqex1}
				\end{equation}
				
        In particular, for $e_t(x) = e^{2\pi i t\cdot x}$ ($t\in \R^{d})$,
            
               \begin{equation}
							 V_i^{*}e_t = \left( \frac{1}{N}\sum_{b\in B} e^{2\pi i(R^{\top})^{-1} (t - l_i)\cdot b} \overline{a_{i,b}} \right)\cdot e_{(R^{\top}) ^{-1}(t - l_i)}
							 \label{eqex2}
							 \end{equation}
            
        \item $\{V_i\}_{i=0}^{M-1}$ is a co-isometry on $L^2(\mu_B)$.

				This shows that the Assumptions \ref{as1} are satisfied, with $e(t)(x)=e^{2\pi it\cdot x}$, $t,x\in\br^d$ and 
				\begin{equation}
				\nu_i(t)=\frac{1}{N}\sum_{b\in B} e^{2\pi i(R^{\top})^{-1} (t - l_i)\cdot b} \overline{a_{i,b}},\quad g_i(x)=(R^{\top}) ^{-1}(t - l_i).
				\label{eqex3}
				\end{equation}

        \item If, in addition, the matrix in \eqref{eq:0.1} is unitary, then the operators $\{V_i\}$ are Cuntz isometries.
    \end{enumerate}
\end{proposition}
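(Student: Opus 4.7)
The proof is essentially a careful computation built from three ingredients: the invariance identity $\int h\,d\mu_B=\frac1N\sum_b\int h\circ\tau_b\,d\mu_B$, the no-overlap hypothesis which lets us evaluate $m_i(\tau_b(x))$ pointwise, and the orthonormality of the columns of the matrix in \eqref{eq:0.1}, rewritten as \eqref{eq0.1.1}.

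For part (i), the plan is to start from the inner product
\[
\langle V_if,g\rangle=\int m_i(x)f(\mathcal R x)\overline{g(x)}\,d\mu_B(x),
\]
apply invariance to rewrite it as $\frac{1}{N}\sum_b\int m_i(\tau_b x)f(\mathcal R\tau_b x)\overline{g(\tau_b x)}\,d\mu_B$, and then use $\mathcal R\circ\tau_b=\mathrm{id}$ on $X_B$ to read off \eqref{eqex1}. For the specialization to $e_t$, I plan to exploit the no-overlap condition to simplify $m_i(\tau_b(x))=e^{2\pi i l_i\cdot\tau_b(x)}a_{i,b}$ for $\mu_B$-a.e.\ $x$ (since $\tau_b(x)$ lies in $\tau_{b'}(X_B)$ only for $b'=b$ up to a null set). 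Substituting $\tau_b(x)=R^{-1}(x+b)$ then factors the exponential as $e^{2\pi i R^{-T}(t-l_i)\cdot x}$ times the scalar in \eqref{eqex3}.

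For part (ii), I would write $V_iV_i^*g(x)=m_i(x)\cdot\frac1N\sum_b\overline{m_i(\tau_b\mathcal R x)}g(\tau_b\mathcal R x)$ and, for $x\in\tau_{b^*}(X_B)$, replace $m_i(x)=e^{2\pi i l_i\cdot x}a_{i,b^*}$ and $m_i(\tau_b\mathcal R x)=e^{2\pi i l_i\cdot\tau_b(\mathcal R x)}a_{i,b}$ using no-overlap. Summing over $i$, the vector $x-\tau_b(\mathcal R x)$ equals $R^{-1}(b^*-b)$, so the inner sum becomes $\sum_i a_{i,b^*}\overline{a_{i,b}}e^{2\pi i R^{-1}(b^*-b)\cdot l_i}$, which by \eqref{eq0.1.1} equals $N\delta_{b,b^*}$. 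This collapses the $b$-sum to $g(\tau_{b^*}\mathcal R x)=g(x)$, proving the row co-isometry relation. The identification of $\nu_i$ and $g_i$ in \eqref{eqex3} is then immediate from part (i); the $g_i$ are contractions in a norm adapted to $R^{-T}$ since $R$ is expansive.

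For part (iii), I plan the analogous computation for $V_i^*V_j$: we get $V_i^*V_jg(x)=g(x)\cdot\frac1N\sum_b\overline{a_{i,b}}\,a_{j,b}\,e^{2\pi i R^{-1}(x+b)\cdot(l_j-l_i)}$, and the row-orthogonality of the matrix in \eqref{eq:0.1} (which follows from unitarity, since orthonormal columns of a square matrix force orthonormal rows) forces $\frac1N\sum_b\overline{a_{i,b}}a_{j,b}e^{2\pi i R^{-1}b\cdot(l_j-l_i)}=\delta_{ij}$, and makes the $x$-dependent exponential disappear when $i=j$. The main obstacle throughout is bookkeeping: making the pointwise-a.e.\ simplifications coming from no-overlap rigorous, and aligning the indices so that \eqref{eq0.1.1} applies in the correct direction in parts (ii) and (iii). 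No genuinely new ideas beyond these computations seem necessary.
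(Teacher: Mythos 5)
Your proposal is correct and follows essentially the same route as the paper's proof: the invariance identity for $\mu_B$ to compute $V_i^*$, the no-overlap condition to evaluate $m_i$ on each piece $\tau_b(X_B)$, and the column-orthonormality relation \eqref{eq0.1.1} (resp.\ unitarity) to collapse the sums in parts (ii) and (iii). The only addition beyond the paper's argument is your explicit remark that the $g_i$ are contractions in a norm adapted to $R^{-\top}$, which the paper leaves implicit.
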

\begin{proof}
    We have that, for $f, g\in L^2(\mu)$,
    $$
        \left<V_i^{*}f,g \right> = \left<f, V_i g \right>$$
                                 $$= \int f(x)\overline{m_i}(x)\overline{g}(\mathcal{R}x)d \mu_B (x)$$
                                 $$= \frac{1}{N}\sum_{b\in B}^{} \int f(\tau_b x) \overline{m_i}(\tau_b x) \overline{g}(\mathcal{R}\tau_b x) d \mu_B (x)$$                                
																$$= \int_{}^{} \left( \frac{1}{N}\sum_{b\in B}^{} \overline{m_i}(\tau_b x)f(\tau_b x) \right) \overline{g}(x) d \mu_B(x).$$
 
    In particular, we have that
$$V_i^{*}e_t(x) = \frac{1}{N} \sum_{b\in B}^{} \overline{m_i}(\tau_{b}(x)) e_t(\tau_b(x))$$
                      $$= \frac{1}{N}\sum_{b\in B}^{} e^{-2\pi i l_i \cdot R^{-1}(x + b)} \overline{a_{i,b}}e^{2\pi i t \cdot R^{-1}(x + b)}$$
                      $$= \frac{1}{N}\sum_{b\in B}^{} e^{2\pi i (R^{\top})^{-1} (t - l_i)\cdot (x + b)}\overline{a_{i,b}}$$
                      $$=\left( \frac{1}{N}\sum_{b\in B} e^{2\pi i(R^{\top})^{-1} (t - l_i)\cdot b} \overline{a_{i,b}} \right)e^{2\pi i (R^{\top}) ^{-1}(t - l_i)\cdot x}$$
                      $$=\left( \frac{1}{N}\sum_{b\in B} e^{2\pi i(R^{\top})^{-1} (t - l_i)\cdot b} \overline{a_{i,b}} \right) e_{(R^{\top}) ^{-1}(t - l_i)}(x),$$
    which implies \eqref{eqex1}.

    Secondly, for $f\in L^2(\mu)$, a fixed $b_0\in B$, and $x\in \tau_{b_{0}}(X_B)$, we have
    $$(\sum_{i=0}^{M-1} V_i V_i^{*}f)(x) = \sum_{i=0}^{M-1} V_i \left(\frac{1}{N}\sum_{b\in B} \overline{m_i}(\tau_b(x))f(\tau_b(x))\right)$$
                                           $$= \sum_{i=0}^{M-1} \frac{1}{N}\sum_{b\in B}^{} m_i(x) \overline{m_i}(\tau_b \mathcal{R}(x)) f(\tau_b\mathcal{R}(x))$$
                                           $$= \frac{1}{N}\sum_{b\in B} \sum_{i=0}^{M-1}m_i(x)\overline{m_i}(\tau_b \mathcal{R}(x)) f(\tau_b\mathcal{R}(x))$$
                                           $$= \frac{1}{N}\sum_{b\in B} \sum_{i=0}^{M-1} m_i(x) e^{-2\pi i l_i\cdot \tau_b \mathcal{R} x}\overline{a_{i,b}} f(\tau_b \mathcal{R}(x))$$
                                           $$=  \frac{1}{N}\sum_{b\in B} \sum_{i=0}^{M-1} e^{2\pi i l_i\cdot x} e^{-2\pi i l_i\cdot (x + R^{-1}(b - b_{0}))}a_{i,b_{0}} \overline{a_{i,b}} f(\tau_b \mathcal{R}(x))$$
                                           $$= \sum_{b\in B} f(\tau_b \mathcal{R}(x)) \left( \frac{1}{N}\sum_{i=0}^{M-1} a_{i,b_{0}}\overline{a_{i,b}} e^{2\pi i l_i\cdot R^{-1}(b_{0} - b)}\right)$$
                                           $$= \sum_{b\in B} f(\tau_b \mathcal{R}(x)) \delta_{b,b_{0}} = f(x).$$
         If the matrix in \eqref{eq:0.1} is unitary, then we have that

$$V_{i'}^{*} V_{i} f(x) = V_{i'}\left( m_i(x)f(\mathcal{R}x)\right)
                                   = \frac{1}{N}\sum_{b\in B}\overline{m_{i'}}(\tau_b(x))m_i(\tau_b(x)) f(\mathcal{R}\tau_b(x))$$
                                   $$= \frac{1}{N}\sum_{b\in B}^{} e^{-2\pi i l_{i'}\cdot \tau_b(x)}\overline{a_{i',b}} e^{2\pi i l_i \cdot \tau_b(x)} a_{i,b} f(x)$$
                                   $$= f(x) \left( \frac{1}{N}\sum_{b\in B} e^{-2\pi i l_{i'} \cdot R^{-1}(x + b)} \overline{a_{i, b}}e^{2\pi il_i \cdot R^{-1}(x + b)} a_{i,b}\right)$$
                                   $$= f(x) \left( e^{2\pi i (R^{\top})^{-1}(l_i - l_{i'})\cdot x}\frac{1}{N}\sum_{b\in B}^{} e^{-2\pi i (R^\top)^{-1} l_{i'} \cdot b} \overline{a_{i', b}}e^{2\pi i(R^\top)^{-1}l_i \cdot b}a_{i,b} \right)$$
                                   $$= f(x)e^{2\pi i(R^\top)^{-1}(l_i - l_{i'})\cdot x  } \delta_{i,i'}= f(x) \delta_{i,i'}.$$

\end{proof}

\begin{example}\label{ex4.2}
In this example, we consider the weighted Fourier frames studied in \cite{PiWe17,DuRa16,DuRa18}.

As in Example \ref{ex4.1}, consider an affine iterated function system on $\br^d$, 
$$\tau_b(x)=R^{-1}(x+b),\quad (b\in B, x\in\br^d).$$
Assume now that there exist some points $l_0,\dots,l_{M-1}$ in $\bz^d$, $l_0=0$, and some complex numbers $\alpha_0,\dots,\alpha_{M-1}$, $\alpha_0=1$, such that the matrix 
\begin{equation}
\frac{1}{\sqrt[]{N}}\left( e^{2\pi i R^{-1} b \cdot l_i}\alpha_i \right)_{i\in \{0,\dots,M-1\}, b\in B}
\label{eq4.2.1}
\end{equation}
is an isometry, i.e., 
\begin{equation}
\frac{1}{N}\sum_{i=0}^{M-1}e^{2\pi i R^{-1}(b-b')\cdot l_i}|\alpha_i|^2=\delta_{b,b'},\quad (b,b'\in B).
\label{eq4.2.2}
\end{equation}

By \cite[Theorem 1.6]{DHL19}, we get that the measure $\mu_B$ has no overlap. Indeed, according to the cited reference, we just have to make sure that the elements in $B$ are not congruent modulo $R\bz^d$. But this follows, by contradiction, from \eqref{eq4.2.2}.

Note that this corresponds to a special case in Example \ref{ex4.1}, when, for all $i\in\{0,\dots, M-1\}$, we have $a_{i,b}=\alpha_i$ for all $b\in B$, that is $a_{i,b}$ is independent of $b$. Using Proposition \ref{pr4.3}, we obtain that the isometries $V_i$ are given by 
\begin{equation}
V_i f(x)=\alpha_i  e^{2\pi i l_i\cdot x} f(\mathcal Rx), \quad (f\in L^2(\mu_B)),
\label{eq4.2.3}
\end{equation}
and they satisfy the Assumptions \ref{as1}, with 
\begin{equation}
g_i(t)=(R^\top)^{-1}(t-l_i), \quad \nu_i(t)=\cj\alpha_i m_B(g_i(t)),\quad (t\in\br^d,i\in\{0,\dots,M-1\}),
\label{eq4.2.4}
\end{equation}
where 
\begin{equation}
m_B(t)=\frac{1}{N}\sum_{b\in B}e^{2\pi i b\cdot t},\quad (t\in \br^d).
\label{eq4.2.5}
\end{equation}

The set $\{0\}$ is a minimal invariant set, because $l_0=0$ and $\alpha_0=1$, so the only possible transition from $0$ is to $0\stackrel{0}{\rightarrow}0$ with probability $|\nu_0(0)|^2=1$.

In dimension $d=1$, since $m_B$ is a trigonometric polynomial, it has finitely many zeros in the attractor $X_L$ of the maps $\{g_i\}$, and therefore we can use Proposition \ref{pr3.38}, to conclude that all minimal invariant sets are finite.

We check that $\{V_j\}$ is reversing on every space $K(M_i)$ for all minimal invariant sets $M_i$, $i=1,\dots,p$. For this, we will use \cite[Proposition 4.2]{DuRa16}, which shows that, for every $t\in M_i$, $b\cdot t\in\bz$, for all $b\in B$. We include the statement of that result, because it gives a lot of information about the structure of the minimal finite invariant sets in this situation:

\begin{proposition}\label{prinv}\cite[Proposition 4.2]{DuRa16}
Assume $\alpha_i\neq 0$ for all $i\in \{0,\dots,M-1\}$. 
Let $\mathcal M $ be a non-trivial finite, minimal invariant set. Then, for every two points $t,t'\in \mathcal M $ the transition is possible from $t$ to $t'$ in several steps. In particular, every point in the set $\mathcal M $ is a cycle point. The set $\mathcal M $ is contained in the interval $\left[\frac{\min(-l_i)}{R-1},\frac{\max(-l_i)}{R-1}\right]$. 

If $t$ is in $\mathcal M $ and if there are two possible transitions $t\rightarrow g_{l_1}(t)$ and $t\rightarrow g_{l_2}(t)$, then $l_1\equiv l_2(\mod R)$.

Every point $t$ in $\mathcal M $ is an extreme cycle point, i.e., $|m_B(t)|=1$ and if $t\rightarrow g_{l_0}(t)$ is a possible transition in one step, then $\{i : (l_0-l_i)\cdot R^{-1}b\in \bz\mbox{ for all } b\in B\}=\{i: l_i\equiv l_0(\mod R)\}$  and
\begin{equation}
\sum_{i,l_i\equiv l_0(\mod R)}|\alpha_l|^2=1.
\label{eqac}
\end{equation}

In particular $t\cdot b\in\bz$ for all $b\in B$. 
\end{proposition}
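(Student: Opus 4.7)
The plan is to establish the statements in the order in which they appear. The irreducibility and the cycle-point property follow directly from minimality: for any $t\in\mathcal{M}$ the orbit $\mathcal{O}(t)$ is invariant and contained in $\mathcal{M}$, hence a compact invariant subset of $\mathcal{M}$, and by Lemma \ref{lem4.11-0} it must equal $\mathcal{M}$. In particular $t\in\mathcal{O}(t)$, so every $t\in\mathcal{M}$ is a cycle point, and for every $t,t'\in\mathcal{M}$ the transition $t\to\cdots\to t'$ is possible in several steps. The containment in the interval $[\min_i(-l_i)/(R-1),\max_i(-l_i)/(R-1)]$ follows from $\mathcal{M}\subseteq X_0$ (established in the proof of Theorem \ref{th3.5}) together with the one-dimensional observation that the attractor of the contractions $g_i(x)=(x-l_i)/R$ lies in the convex hull of their fixed points $-l_i/(R-1)$.

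The heart of the proposition is the extreme-cycle statement, and I would start from the identity
\begin{equation*}
1=\sum_{i=0}^{M-1}|\nu_{l_i}(t)|^2=\sum_{i=0}^{M-1}|\alpha_i|^2\,|m_B(g_{l_i}(t))|^2,\qquad t\in\mathcal{M},
\end{equation*}
which is just the co-isometry relation evaluated at $e_t$. Two structural facts drive the argument. First, since $B\subseteq\bz^d$, $m_B$ is $\bz^d$-periodic, so $l_i\equiv l_j\pmod{R}$ implies $g_{l_i}(t)-g_{l_j}(t)\in\bz^d$ and therefore $m_B(g_{l_i}(t))=m_B(g_{l_j}(t))$; combined with $\alpha_i\neq 0$ for all $i$, this already forces the set of labels along which a transition out of $t$ is possible to be a union of residue classes modulo $R$. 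Second, the orthogonality relations \eqref{eq4.2.2} for the columns of the isometry \eqref{eq4.2.1}, rewritten with $C_r:=\sum_{l_i\equiv r\,(\mod R)}|\alpha_i|^2$, become a linear system on $\{C_r\}$ whose coefficients $e^{2\pi iR^{-1}(b-b')\cdot r}$ depend only on $r$ modulo $R$, together with the normalization $\sum_r C_r=N$.

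Combining these two facts with the invariance of $\mathcal{M}$ and with the finite-cycle structure around $t$, I would group the displayed identity by residue classes of the labels and iterate along a cycle through $t$. The aim is to show that a single coset $[l_0]$ must carry all the transition probability out of $t$, which would give $l_1\equiv l_2\pmod{R}$ whenever two transitions are possible out of $t$, and $C_{l_0}\,|m_B(g_{l_0}(t))|^2=1$; since $|m_B|\le 1$ and $\sum_r C_r=N$, a further comparison would then pin down $C_{l_0}=1$ and $|m_B(g_{l_0}(t))|=1$. Irreducibility from the first paragraph would then propagate $|m_B|\equiv 1$ across $\mathcal{M}$, yielding the extreme-cycle property. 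Finally, because $0\in B$, the equality $|m_B(t')|=\frac{1}{N}\bigl|\sum_{b\in B}e^{2\pi ib\cdot t'}\bigr|=1$ forces all unit phases $e^{2\pi ib\cdot t'}$ ($b\in B$) to coincide and equal $1$, giving $b\cdot t'\in\bz$ for all $b\in B$.

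The main obstacle I anticipate is the combinatorial step in the preceding paragraph: extracting, from \eqref{eq4.2.2} and from the finite cycle structure, the fact that transitions from $t\in\mathcal{M}$ concentrate on exactly one coset of labels modulo $R$ with $C_{l_0}=1$. This is where the full strength of the filter orthogonality and of the minimality of $\mathcal{M}$ must be used simultaneously; a clean packaging seems to go through the spectral/projection structure of the $M\times N$ isometry whose columns are \eqref{eq4.2.1}, but making this bookkeeping precise is where the technical work sits.
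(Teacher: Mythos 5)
First, a point of orientation: the paper itself gives no proof of Proposition \ref{prinv} --- it is quoted from \cite[Proposition 4.2]{DuRa16} precisely so that its conclusions can be used in Example \ref{ex4.2} --- so your attempt can only be judged on its own merits. The peripheral parts of your sketch are fine: minimality plus Lemma \ref{lem4.11-0} (closures being trivial for a finite set) gives $\mathcal O(t)=\mathcal M$ for every $t\in\mathcal M$, hence transitions between any two points and, with the small extra step of splicing an outgoing edge with a return path to get a \emph{nonempty} loop, the cycle-point claim; the interval containment follows from $\mathcal M\subseteq X_0$ and the fixed points of the $g_i$; and the final implication $|m_B(t)|=1\Rightarrow b\cdot t\in\bz$ (using $0\in B$) is correct.

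The genuine gap is that the core of the proposition is never proved. The identity $1=\sum_i|\alpha_i|^2|m_B(g_i(t))|^2$ that you take as a starting point holds for \emph{every} $t\in\br^d$ (it is just $\sum_i\|V_i^*e_t\|^2=\|e_t\|^2$), so it cannot by itself detect membership in $\mathcal M$; the whole content of the extreme-cycle statement, of the single-residue-class claim ($l_1\equiv l_2\ (\mod R)$ for two possible transitions out of $t$), and of \eqref{eqac} is exactly the step you defer --- you state the "aim" of concentrating the transition probability on one coset and then explicitly label it "the main obstacle ... where the technical work sits" without carrying it out. Announcing where the difficulty lies is not a proof of it. Two further points would need repair even granting that step: (a) your closing comparison "since $|m_B|\le 1$ and $\sum_r C_r=N$" does not yield $C_{[l_0]}=1$; you need the separate bound $C_r\le 1$ for each residue class, which follows from the off-diagonal relations in \eqref{eq4.2.2} (e.g.\ by a Bessel estimate for the orthonormal exponentials $\{e^{2\pi i b\cdot x}\}_{b\in B}$ in $L^2$ of the atomic measure $\frac1N\sum_i|\alpha_i|^2\delta_{R^{-1}l_i}$), not from the normalization alone; and (b) the asserted equality $\{i:(l_0-l_i)\cdot R^{-1}b\in\bz\mbox{ for all }b\in B\}=\{i:l_i\equiv l_0\ (\mod R)\}$ is untouched by your residue-class bookkeeping: the inclusion $\supseteq$ is immediate from periodicity, but the reverse inclusion is strictly stronger whenever the elements of $B$ share common factors with $R$, so it must be extracted from \eqref{eq4.2.2} together with membership in $\mathcal M$, and your sketch offers no argument for it. As it stands, the proposal proves the easy assertions and only outlines a plausible strategy for the ones that matter.
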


Take $c\in M_i$, and $j\in\{0,\dots,M-1\}$ such that $\nu_j(c)\neq 0$. This means that the transition $c\rightarrow g_j(c)$ is possible and so $g_j(c)\in M_i$ and $b\cdot g_j(c)\in\bz$ for all $b\in B$. Then, for $x\in\tau_b(X_B)$, we have :
$$V_j e_{g_j(c)} (x)=\alpha_j e^{2\pi  i l_j x} e^{2\pi i g_j(c)(Rx-b)}=\alpha_je^{2\pi i(l_j x+ g_j(c)\cdot (Rx-b))}$$$$=\alpha_je^{2\pi i(l_j x+ g_j(c)\cdot Rx)}=\alpha_ie^{2\pi i(l_j x+ c x-l_jx)}=\alpha_j e^{2\pi i cx}.$$
Also 
$$\cj\nu_j(c)=\alpha_j \cj m_B(g_j(c))=\alpha_j \frac{1}{N}\sum_{b\in B}e^{-2\pi  ib g_j(c)}=\alpha_j.$$
So $V_j e_{g_j(c)}=\cj\nu_j(c) e_c$.

Thus, we can apply Theorem \ref{th3.5}. So, we pick a point $c_i$ in each minimal invariant set $M_i$. Recall that $\Omega_{c_i}^{(0)}$ is the set of all words in $\Omega$ that do not end in a cycle word for $c_i$. We compute $V_\omega e_{c_i}$, for $\omega=\omega_1\dots\omega_n\in \Omega$, and we show that 
\begin{equation}
V_\omega e_{c_i}=\alpha_{\omega_1}\dots\alpha_{\omega_n} e_{l_{\omega_1}+Rl_{\omega_2}+\dots R^{n-1}l_{\omega_n}+R^n c_i}.
\label{eq4.2.6}
\end{equation}
Indeed, using the fact that $bc_i\in\bz$ for all $b\in B$, and $R\in\bz$, take $x\in \tau_b(X_B)$, and we have:
$$V_{\omega_n}e_{c_i}(x)=\alpha_{\omega_n}e^{2\pi i(l_{\omega_n} x+c_i (Rx-b))}=\alpha_{\omega_n}e_{l_{\omega_n}+Rc_i}(x).$$
Then 
$$V_{\omega_{n-1}}V_{\omega_{n}}e_{c_i}=\alpha_{\omega_{n-1}}\alpha_{\omega_n}e_{l_{\omega_{n-1}}+Rl_{\omega_{n}}+R^2c_i}.$$
The relation \eqref{eq4.2.6} then follows by induction.

Thus, with Theorem \ref{th3.5} and Corollary \ref{co3.39}, we obtain 

\begin{corollary}\label{cor4.7}\cite[Theorem 1.6]{DuRa18}
In dimension $d=1$, let $M_i$, $i=1,\dots, p$ be all the minimal finite invariant sets, and pick $c_i\in M_i$ for each $i=1,\dots,p$. The family of weighted exponential functions 
$$\left\{ \alpha_{\omega_1}\dots\alpha_{\omega_n} e_{l_{\omega_1}+Rl_{\omega_2}+\dots+R^{n-1}l_{\omega_n}+R^n c_i} : \omega\in\Omega_{c_i}^{(0)}, i=1,\dots,p\right\}$$
is a Parseval frame for $L^2(\mu_B)$.
\end{corollary}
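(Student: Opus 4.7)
The plan is to apply Theorem \ref{th2.11} directly to the row co-isometry $(L^2(\mu_B), V_i)_{i=0}^{M-1}$ built in Example \ref{ex4.2} and then identify the resulting frame vectors $V_\omega e_{c_i}$ explicitly. Almost all of the verification is already carried out in the text preceding the corollary, so the proof is mainly a matter of assembling the pieces.

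First I would confirm that the Assumptions \ref{as1} are satisfied: this is Proposition \ref{pr4.3}, with $\mathcal T = \mathbb R$ (taking $d=1$), $e_t(x) = e^{2\pi i t x}$, and the contractions $g_i(t) = R^{-1}(t-l_i)$. The maps $g_i$ are clearly one-to-one since $R \neq 0$. Next, applying Proposition \ref{pr3.38} I would note that the singleton $\{0\}$ is a minimal finite invariant set (as remarked in the text, since $l_0 = 0$ and $\alpha_0 = 1$, the only possible transition out of $0$ is $0 \stackrel{0}{\to} 0$), and the zero set of $\nu_i(t) = \overline{\alpha_i}\, m_B(g_i(t))$ on the compact attractor $X_L$ of $\{g_i\}$ is finite because $m_B$ is a one-dimensional trigonometric polynomial with finitely many zeros. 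Hence all minimal compact invariant sets $M_1,\dots,M_p$ are finite, and the random walk induced on each $M_i$ is irreducible (from minimality of $M_i$ together with finiteness).

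Then I would invoke the reversing computation given just above the corollary: using Proposition \ref{prinv}, every $c \in M_i$ satisfies $b \cdot c \in \mathbb Z$ for all $b \in B$, which forces $V_j e_{g_j(c)} = \overline{\nu_j(c)}\, e_c$ whenever $\nu_j(c) \neq 0$. So $\{V_j\}$ is reversing on each $K(M_i)$. Theorem \ref{th2.11} now applies and yields that
\[
\bigl\{ V_\omega e_{c_i} : \omega \in \Omega_{c_i}^{(0)},\, i = 1,\dots,p\bigr\}
\]
is a Parseval frame for $L^2(\mu_B)$.

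Finally I would translate the frame vectors into their closed form. The identity \eqref{eq4.2.6}, namely
\[
V_\omega e_{c_i} = \alpha_{\omega_1}\cdots\alpha_{\omega_n}\, e_{l_{\omega_1} + R l_{\omega_2} + \cdots + R^{n-1} l_{\omega_n} + R^n c_i},
\]
is proved by induction on $|\omega|$ (the base case $V_{\omega_n} e_{c_i}$ uses again that $b \cdot c_i \in \mathbb Z$ to kill the $e^{-2\pi i c_i \cdot b}$ factor coming from $\tau_b$), and this is already carried out in the excerpt. Substituting this formula into the abstract frame statement gives exactly the weighted exponential family in the corollary. The only conceptual obstacle is verifying the reversing property in the concrete exponential setting; that step, however, reduces to the arithmetic observation $b \cdot c \in \mathbb Z$ for cycle points, supplied by Proposition \ref{prinv}.
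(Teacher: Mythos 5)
Your proposal is correct and follows essentially the same route as the paper: verify Assumptions \ref{as1} via Proposition \ref{pr4.3}, use Proposition \ref{pr3.38} with the finite zero set of the trigonometric polynomial $m_B$ to get finiteness of all minimal compact invariant sets, establish the reversing property from the integrality $b\cdot c\in\bz$ of Proposition \ref{prinv}, apply the Parseval frame theorem, and identify the frame vectors via \eqref{eq4.2.6}. If anything, you are slightly more precise than the text in naming Theorem \ref{th2.11} (rather than Theorem \ref{th3.5}) as the result whose hypotheses — injective $g_i$, finite minimal invariant sets, reversing on each $K(M_i)$ — are exactly what is being checked.
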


\begin{corollary}\label{cor4.8}\cite[Theorem 8.4]{DJ06}
In dimension $d=1$, suppose that the matrix 
$$\frac{1}{\sqrt{N}}\left(e^{2\pi iR^{-1}b\cdot l_i}\right)_{i\in\{0,\dots,N-1\},b\in B}$$
is unitary. Let $M_i$, $i=1,\dots, p$ be all the minimal invariant sets, and pick $c_i\in M_i$ for each $i=1,\dots,p$. The family of exponential functions 
$$\left\{  e_{l_{\omega_1}+Rl_{\omega_2}+\dots+R^{n-1}l_{\omega_n}+R^n c_i} : \omega\in\Omega_{c_i}^{(0)}, i=1,\dots,p\right\}$$
is an orthormal basis for $L^2(\mu_B)$.
\end{corollary}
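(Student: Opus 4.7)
The plan is to recognize Corollary \ref{cor4.8} as the specialization of Corollary \ref{cor4.7} in which all weights $\alpha_i$ equal $1$ and the isometry in \eqref{eq4.2.1} is in fact unitary, and then to upgrade the Parseval frame conclusion to an orthonormal basis via Corollary \ref{co3.39}. Concretely, taking $\alpha_i=1$ for all $i$ forces $M=N$ (the matrix in \eqref{eq4.2.1} is now square) and reduces the row co-isometry $(V_i)_{i=0}^{N-1}$ from \eqref{eq4.2.3} to $V_if(x)=e^{2\pi il_i\cdot x}f(\mathcal Rx)$.

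First I would invoke Proposition \ref{pr4.3}(iii): since the matrix $\frac{1}{\sqrt N}(e^{2\pi iR^{-1}b\cdot l_i})_{i,b}$ is by hypothesis unitary, the operators $V_i$ are not merely a row co-isometry but genuine Cuntz isometries on $L^2(\mu_B)$, so $V_{i'}^*V_i=\delta_{i,i'}I$. Next, I would import from the analysis in Example \ref{ex4.2} the two structural facts already established there: in dimension $d=1$, the trigonometric polynomial $m_B$ has only finitely many zeros on the attractor $X_L$ of the $g_i$, so Proposition \ref{pr3.38} guarantees that all minimal compact invariant sets $M_1,\dots,M_p$ are finite (with $\{0\}$ serving as the guaranteed finite minimal set since $l_0=0$); and the explicit computation preceding Corollary \ref{cor4.7} shows that $\{V_j\}$ is reversing on each $K(M_i)$, because for $c\in M_i$ and $\nu_j(c)\neq 0$ Proposition \ref{prinv} ensures $b\cdot g_j(c)\in\bz$, leading to $V_je_{g_j(c)}=\cj\nu_j(c)e_c$.

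With these ingredients in hand, Theorem \ref{th3.5} applies and Corollary \ref{co3.39} then directly yields that $\{V_\omega e_{c_i}:\omega\in\Omega_{c_i}^{(0)},\ i=1,\dots,p\}$ is an orthonormal basis of $L^2(\mu_B)$ (the Cuntz dilation being just $\{V_j\}$ itself on $K$, since the $V_j$ are already Cuntz isometries). Finally, I would identify these basis vectors explicitly using formula \eqref{eq4.2.6}: with all $\alpha_{\omega_k}=1$,
\[
V_\omega e_{c_i}=e_{l_{\omega_1}+Rl_{\omega_2}+\dots+R^{n-1}l_{\omega_n}+R^nc_i},
\]
which is exactly the claimed exponential in the statement of the corollary.

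The only real subtlety, and the step I would be most careful with, is confirming that the hypotheses of Corollary \ref{co3.39} are genuinely in force under the weaker-looking assumption of Corollary \ref{cor4.8}: one must check that setting $\alpha_i\equiv 1$ in \eqref{eq4.2.2} is compatible with (indeed equivalent to the column-isometry half of) the unitarity assumed here, and that the reversing property from Example \ref{ex4.2}, which was established under the blanket assumption $\alpha_i\neq 0$, transfers verbatim. Both are immediate once one writes things out, but they deserve explicit mention so that the reader sees why the jump from ``Parseval frame'' in Corollary \ref{cor4.7} to ``orthonormal basis'' here is licensed precisely by the unitarity hypothesis.
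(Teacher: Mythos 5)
Your proposal is correct and follows essentially the same route as the paper: the authors derive Corollary \ref{cor4.8} exactly by specializing the analysis of Example \ref{ex4.2} (finiteness of the minimal invariant sets via Proposition \ref{pr3.38}, the reversing property via Proposition \ref{prinv}, and the explicit formula \eqref{eq4.2.6}) and then invoking Theorem \ref{th3.5} together with Corollary \ref{co3.39}, using Proposition \ref{pr4.3}(iii) to pass from the Parseval frame of Corollary \ref{cor4.7} to an orthonormal basis under the unitarity hypothesis. Your closing remark about checking that $\alpha_i\equiv 1$ is compatible with the hypotheses (in particular $\alpha_i\neq 0$ for Proposition \ref{prinv}) is exactly the right point of care, and it goes through as you describe.
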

\end{example}

\begin{example}\label{ex4.2.1}
In this example, we show that, in higher dimensions, it is possible to have minimal compact invariant sets which are infinite. Take
$$R:=\begin{bmatrix}
4&0\\1&4
\end{bmatrix},\quad B:=\left\{\begin{bmatrix} 0\\0\end{bmatrix},\begin{bmatrix} 0\\3\end{bmatrix},\begin{bmatrix} 1\\0\end{bmatrix},\begin{bmatrix} 1\\3\end{bmatrix}\right\}.$$
One can take 
$$L:=\left\{l_0=\begin{bmatrix} 0\\0\end{bmatrix},l_1=\begin{bmatrix} 2\\0\end{bmatrix},l_2=\begin{bmatrix} 0\\2\end{bmatrix},l_3=\begin{bmatrix} 2\\2\end{bmatrix}\right\},$$
and all $\alpha_i=1$, so that the matrix in \eqref{eq4.2.1} is unitary.

We have 
$$m_B(x,y)=\frac{1}{4}(1+e^{2\pi ix}+e^{2\pi i3y}+e^{2\pi  i(x+3y)})=\frac{1}{4}(1+e^{2\pi ix})(1+e^{2\pi i3y}).$$

We note that the set $S=\{(x,-2/3) : x\in\br\}$ is invariant. Indeed, $(R^{\top})^{-1}=\begin{bmatrix}
1/4&-1/16\\0&1/4
\end{bmatrix}$. 
So, if $l_i=(*,0)$ then the second component of $g_i(x,-2/3)$ is $-1/6$ and so 
\[
    \nu_i(x,-2/3)=m_B(g_i(x,-2/3))=m_B(*,-1/6)=0.
\]
If $l_i=(*,2)$, then the second component of $g_i(x,-2/3)$ is $-2/3$ so $g_i(x,-2/3)\in S$. Thus the only possible transitions from $(x,-2/3)$ are 
$$(x,-2/3)\stackrel{(0,2)^\top}{\rightarrow}(x/4+1/6,-2/3),\mbox{ and } (x,-2/3)\stackrel{(2,2)^\top}{\rightarrow}(x/4-1/3,-2/3).$$

Let $X_1$ be the attractor of the iterated function system $\sigma_0(x)=x/4+1/6$, $\sigma_2(x)=x/4-1/3$, i.e., the unique compact subset of $\mathbb R$ such that 
$$X_1=\sigma_0(X_1)\cup \sigma_2(X_1).$$

We claim that $X_1\times\{-2/3\}$ is a minimal compact invariant set. 

If we compute the fixed points of $\sigma_0$ and $\sigma_2$, which are $\frac29$ and $-\frac49$ respectively, the interval $[-\frac49,\frac29]$ is invariant for both $\sigma_0$ and $\sigma_2$, which implies that $X_1\subset [-\frac49,\frac29]$.

Note that, if $x\in X_1$, then 
$$m_B(x,-2/3)=\frac{1}{4}(1+e^{2\pi ix})=0,$$
if and only if $x=-\frac12$. Thus, the only points $(x,-2/3)\in X_1\times\{-2/3\}$ for which not both transitions are possible, could be $(\sigma_0^{-1}(-1/2),-2/3)=(-8/3,-2/3)$ and $(\sigma_1^{-1}(-1/2),2/3)=(-2/3,-2/3)$. But these points are outside the interval $[-\frac49,\frac29]\times\{-2/3\}$ so they are not in $X_1\times\{-2/3\}$. Therefore, for all points in $X_1\times \{-2/3\}$, both transitions are possible. But the closed orbit of any point in the attractor is the attractor itself, thus $X_1\times\{-2/3\}$ is minimal compact invariant. 
\end{example}

\begin{remark}\label{rem4.2.1}
A lot of information about the structure of the minimal invariant sets in higher dimensions can be found in \cite{CCR96}.
\end{remark}

\begin{example}\label{ex4.11}

We present here an example of a Cuntz representation, where a cycle point has two cycle words and therefore the family $\{S_\omega e_c : \omega\in\Omega_c^{(0)}\}$ is incomplete in the Hilbert space $H(\mathcal M)$ that corresponds to its minimal invariant set (by Proposition \ref{prinv}).

    We consider an affine iterated function system on $\br^2$, as in Definition \ref{def4.1}, determined by the scaling matrix ${\mathbf R}=\begin{bmatrix} 4&0\\0&2\end{bmatrix}$, and digits $B\times B'$ where $ B = \{0,2\}, B' = \{0,1\}$. Let $R=4, R'=2$,$ l_0=(0,0), l_1=(3,0),l_2= (4, 0),l_3 =(15, 0) $. We will order the set $B\times B' = \{(0,0), (2,0), (0,1), (2,1)\} $ for the sake of indexing the matrices to follow. Take 
\[
    \frac{1}{\sqrt[]{4}}(a_{i, (b,b')})_{i=0,\dots,3 , (b,b')\in B\times B'}
    = \frac{1}{2}\begin{pmatrix} 1 & 1 & 1 & 1 \\ 1 & 1 & \xi & \xi \\ 1 & 1 & -1 & -1 \\ 1 & 1 & -\xi & -\xi \end{pmatrix}, \quad |\xi| = 1; \, \xi \neq 1,-1
\] (this appears also in \cite{PiWe17}).  
\[
    \frac{1}{\sqrt[]{4}}\left( e^{2\pi i (\frac{1}{4}l_i^1 b + \frac{1}{2}l^2 b')}a_{i, (b,b')} \right)_{i=0,\dots,3, (b,b')\in B\times B'} = \frac{1}{2}\begin{pmatrix} 1 & 1 & 1 & 1 \\ 1 & -1 & \xi & -\xi \\ 1 & 1 & -1 & -1 \\ 1 & -1 & -\xi & \xi \end{pmatrix} 
\] is unitary where $l_i^1, l_i^2$ denote the coordinates of $l_i$. By an abuse of notation, we will often identify $(x,0)$ with $x$, to simplify the notation.
Define, for $i=0,\dots,3$,
\[
    m_i(x,x') = e^{2\pi i l_i \cdot (x,x')}\sum_{(b,b')\in B\times B' }^{} a_{i, (b,b')} \chi_{\tau_{b,b'}(X_B, X_B')}  (x,x'),
\]
and $S_i f (x,x') = m_l(x,x')f(\mathcal{R}x, \mathcal{R}'x')$.

Then from Proposition \ref{pr4.3}, since the matrix above is unitary, the operators $(S_i)$ form a representation of the Cuntz algebra $\mathcal{O}_4$. If we denote $e_{t,t'}(x,x') = e^{2\pi i (tx + t'x')}$, then we have that the $S_l^{*}$ have the form:
$$
S_i^{*} e_{t,t'} = \nu_i(t,t') e_{g_i(t,t')}, \text{ where}
$$
\begin{align*}
    \nu_i(t,t') &= \frac{1}{4}\sum_{(b,b')\in B\times B'}^{}e^{2\pi i (\frac{1}{4}(t-l_i)b + \frac{1}{2}t'b')}\overline{a_{i,(b,b')}} \\
    g_i(t,t')   &= \left( \frac{1}{4}(t-l_i), \frac{1}{2}t' \right)
.\end{align*}

We now find the compact minimal invariant sets. Assume $\mathcal M$ is a compact minimal invariant subset of $\mathcal{T} = \R^2$. Then if $(t,t')\in \mathcal M$, then there must exist a possible transition $(t,t')\to g_{i_1}(t,t') \to \ldots \to g_{i_1\ldots i_n}(t,t')\in \mathcal M$ for all $n$. By compactness of $\mathcal M$, there exists a convergent subsequence $g_{i_{1}i_{2}\ldots i_{n_k}}(t,t') \to (x_0, 0)\in \mathcal M$ as $k\to \infty$, since the second component of  $g_{i_{1}i_{2}\ldots i_{n_k}}(t,t')$ is $\frac{1}{2^{n_k}}t'$, which converges to $0$ as $k\to \infty$. Since $\mathcal M$ is minimal $\mathcal M = \overline{\mathcal{O}(x_0,0)}\subseteq \R\times \{0\}$. Thus $\mathcal M = M_1 \times \{0\} $, for some $M_1\subseteq \R$ (note that any transition from a point in $\R\times\{0\}$ leads to $\R\times\{0\}$). Now we calculate:
\begin{align*}
    \nu_i(t,0) = \frac{1}{4}\sum_{b\in B}^{} e^{2\pi i (\frac{1}{4}(t-l_i)b)}\sum_{b'\in B'}^{} \overline{a_{i,(b,b')}}
.\end{align*}
However, we see that $a_{i,(b,b')}$ is independent of $b$, so we denote
\begin{align*}
    \alpha_i &= \frac{1}{2}\sum_{b'\in B'}^{} a_{i,(b,b')} = \frac{1}{2}(a_{i, (0,0)} + a_{i,(0,1)}) = \frac{1}{2}(a_{i, (2,0)} + a_{i, (2,1)}) \\
    m_B(x)   &= \frac{1}{2}\sum_{b\in B}^{} e^{2\pi i bx}=\frac12(1+e^{2\pi i 2x})
.\end{align*}
Therefore we may write
\begin{align*}
    \nu_i(t,0) &= \overline{\alpha_i} m_B(\frac{1}{4}(t - l_i))\\
    g_i(t,0)   &= \left( \frac{1}{4}(t-l_i), 0 \right)
.\end{align*}

Thus, $M_1$ must be invariant for the maps $g_i(\cdot, 0)$ and the weights $\overline{\alpha_i}m_B(\frac{1}{4}(\cdot - l_i))$. Note that in particular, since $\alpha_2 =\frac12(1+(-1))= 0$, and $\alpha_i \neq 0$ for $i\neq 2$, it suffices to see that $M_1$ is invariant for the $g_i(\cdot , 0) $ and $m_B(\frac{1}{4}(\cdot -l_i)), i\in \{0,1,3\}$. We will show that $M_1 = \overline{\mathcal{O}(-1)} = \{-1, -4\}$ or $M_1 = \overline{\mathcal{O}(0)} = \{ 0\} $. 

With Proposition \ref{prinv}, we know that, for any $c\in M_1$, we must have $c\cdot b\in\bz$ for all $b\in B$. So $c\in\frac12\bz$. Also, $c\in [\frac{-15}{3},\frac{0}{3}]=[-5,0]$. Also, if $c\in M_1$ and the transition to $g_{i}(c)$ is possible, then $g_i(c)$ is in $M_1$ so it must be of the same form $\frac12\bz$. Note that $m_B(x)=0$ only if $x$ is of the form $\frac{2k+1}{4}$ for some $k\in\bz$. Thus, if $c=\frac{2j+1}{2}$ for some $j\in\bz$ then the transition $c\rightarrow g_0(c)=\frac{2j+1}{8}$ is possible and $g_0(c)$ is not in $\frac12\bz$. This means that $c$ cannot be in $M_1$. So we only have to check $\{-5,-4,-3,-2,-1,0\}$. $\{0\}$ is the trivial invariant set. Also $\{-4,-1\}$ is invariant. We have the possible transitions $-5\rightarrow g_1(-5)=-2\rightarrow g_0(-2)=-\frac12$ and $-3\rightarrow g_1(-3)=-\frac32$. Since $-\frac12$ and $-\frac32$ are not in $M_1$ it follows that neither are $-5,-3$ nor $-2$. 

In the case of $M_1 = \mathcal{O}(-1) = \{-4, -1\} $, we see that $-1$ has two distinct cycle words, $1$ and $3,0$. Indeed $-1\rightarrow g_1(-1)=\frac{-1-3}{4}=-1$ and $-1\rightarrow g_3(-1)=\frac{-1-15}{4}=-4\rightarrow g_0(-4)=\frac{-4}{4}=0$. Thus by Proposition \ref{princ}, we know that $\overline{\text{span}}\{S_\omega e_{(-1,0)}: \omega\in \Omega_{(-1,0)}^{(0)}\} \neq H(M_1)$.

\end{example}

\begin{example}\label{ex4.3}
 We use Theorem \ref{th3.5} to provide a class of Parseval frames as in \cite{DuRa20}, Theorem 3.11. Let $A$ be a $M\times N$ matrix such that $\frac{1}{\sqrt{N}}A^{*}A=I_N$ (hence $N\leq M$) and the first row is constant $\alpha_{0,j}=1$, $j=0,\dots,N-1$. With $\mathcal R(x)=Nx\text{ mod } 1$ and $k\in\{0,1,\dots M-1\}$ define
 $$m_k(x):=\sum_{j=0}^{N-1}a_{kj}\chi_{_{[j/N, (j+1)/N)}}(x) $$
 $$V_k:L^2[0,1]\to L^2[0,1], \quad V_kf(x):=m_k(x)f(\mathcal R(x)).$$

Note that this corresponds to the Example \ref{ex4.1} when $R=N$, $B=\{0,1,\dots,N-1\}$, $l_i=0$ for all $i=0,\dots,M-1$ and $a_{i,b}$ are the entries of the matrix $A$. Indeed, the attractor of the iterated function system $(\tau_b)_{b\in B}$ is $[0,1]$ and the invariant measure $\mu_B$ is the Lebesgue measure on $[0,1]$.

By Proposition \ref{pr4.3}, the Assumptions \ref{as1} are satisfied, with 
$$g_i(t)=\frac tN,\quad \nu_i(t)=\frac{1}{N}\sum_{j=0}^{N-1}e^{2\pi i\frac {j\cdot t}N}\cj a_{i,j}.$$

We will show that the only compact minimal invariant set is $\mathcal{M}=\{0\}$ then apply Theorem \ref{th3.5}.

The set $M=\{0\}$ is invariant because the only possible transition from $0$ is $0\stackrel{0}{\rightarrow}0$ with probability $|\nu_0(0)|^2=1$. 

 To show this is the only compact, minimal invariant set suppose by contradiction there is a compact minimal invariant set $\mathcal{N}$ with some $t\in \mathcal{N}$, $t\neq 0$. We argue that necessarily $0\in\mathcal{N}$ thus $\{0\}\subset \mathcal{N}$, contradicting the minimality of $\mathcal{N}$. If $t\in\mathcal{N}$ then $t/N\in\mathcal{N}$ because there must be at least one possible transition $t\to g_k(t)=t/N$ since $\sum_k |\nu_k(t)|^2=1$. Continuing in this fashion we get $a_n:=(t/N)^n\in \mathcal{N}$ for all $n\in\mathbb{N}$. By compactness and invariance $0=\lim_{n\to\infty}a_n \in\mathcal{N}$, as desired.  

Since the only cycle word for $0$ is $0$, with Theorem \ref{th3.5}, we obtain:

\begin{corollary}\cite[Theorem 1.4]{DuRa20}
The family of functions 
$$\{V_\omega {\bf 1}: \omega\in\Omega\mbox{ not ending in $0$}\}$$
is a Parseval frame for $L^2[0,1]$.
\end{corollary}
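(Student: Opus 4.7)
The plan is to apply Corollary \ref{main} to the fixed vector $v_0 = e_0 = \mathbf{1}$. Two ingredients are needed: (a) verify that $V_0^*\mathbf{1} = \mathbf{1}$; and (b) establish irreducibility of the Cuntz dilation of $(L^2[0,1], V_i)_{i=0}^{N-1}$. Once both are in place, the equivalence of items (1) and (4) in Corollary \ref{main} directly yields the claim, because the set $\Omega_0$ there is precisely the set of words that do not end in the digit $0$ (including the empty word).

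For (a), I would plug $t = 0$ into \eqref{eqex2}. Since $l_0 = 0$ and $\overline{a_{0,j}} = 1$ for every $j$, one has $g_0(0) = 0$ and $\nu_0(0) = \tfrac{1}{N}\sum_{j=0}^{N-1} 1 = 1$, so $V_0^* e_0 = e_0$. For (b), I would invoke Theorem \ref{th3.5}: the example has already checked that Assumptions \ref{as1} hold (via Proposition \ref{pr4.3}), and the compactness/contractivity argument given in the text shows that $\{0\}$ is the unique minimal compact invariant set of the induced random walk — any orbit under the contractions $g_i(t) = t/N$ must accumulate at $0$, and $\{0\}$ is invariant, so minimality forces every minimal compact invariant set to coincide with $\{0\}$. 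With $p = 1$, Theorem \ref{th3.5} gives $H = H(\{0\})$ and asserts that the sub-representation on $H(\{0\})$ is irreducible, hence the full Cuntz dilation is irreducible.

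With irreducibility in hand, Corollary \ref{main} gives that $\{V_\omega \mathbf{1} : \omega \in \Omega_0\}$ is a Parseval frame for $L^2[0,1]$, which is exactly the stated family. There is no serious obstacle: the example has already done the structural work of locating the minimal invariant set, and Corollary \ref{main} packages the irreducibility-to-frame implication. The only point requiring minor care is ensuring that the abstract $\Omega_\beta$ of Theorem \ref{pmain} (with $\beta$ the irreducible one-letter word $0$) matches the indexing ``words not ending in $0$'' used in the statement, which it does by definition.
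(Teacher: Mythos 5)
Your proof is correct and takes essentially the same route as the paper: both rest on Proposition \ref{pr4.3} to verify Assumptions \ref{as1} and on the argument that $\{0\}$ is the unique minimal compact invariant set (so $p=1$ and $H=H(\{0\})$). The only divergence is the final packaging: the paper invokes the machinery of Theorems \ref{th3.5}/\ref{th2.11} directly (where the reversing condition on $K(\{0\})$ reduces to the trivial identity $V_0\mathbf{1}=\mathbf{1}$), whereas you route through Corollary \ref{main} using the fixed point $V_0^*\mathbf{1}=\mathbf{1}$ together with the irreducibility supplied by Theorem \ref{th3.5}; both wrappers are established in the paper, and your identification of $\Omega_0$ with the set of words not ending in the digit $0$ (including the empty word) is exactly right.
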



\end{example}

\begin{acknowledgements}
We would like to thank professor Deguang Han for very helpful conversations and to the anonymous referee for a very careful and thorough review of our paper.
\end{acknowledgements}

\bibliographystyle{alpha}	
\bibliography{eframes}

\end{document}